\newcommand{\CH}{{\mathcal H}}
\newcommand{\al}{\alpha}
\newcommand{\Sch}{Schr\"odinger}
\newcommand{\beq}{\begin{equation}}
\newcommand{\eeq}{\end{equation}}
\newcommand{\be}{\begin{equation}}
\newcommand{\ee}{\end{equation}}
\newcommand{\bea}{\begin{eqnarray}}
\newcommand{\eea}{\end{eqnarray}}
\newcommand{\brs}{\begin{eqnarray*}}
\newcommand{\ers}{\end{eqnarray*}}
\newcommand{\ba}{\begin{array}}
\newcommand{\ea}{\end{array}}
\newcommand{\br}{\begin{eqnarray}}
\newcommand{\er}{\end{eqnarray}}
\newcommand{\Lie}{\mathrm{Lie}}
\newcommand{\lp}{\left(}
\newcommand{\rp}{\right)}
\newcommand{\la}{\left\langle}
\newcommand{\ra}{\right\rangle}
\def\EOP{\ \hfill $\Box$}
\renewcommand{\r}[1]{(\ref{#1})}
\def\eps{\varepsilon}
\def\lb{\lambda}
\theoremstyle{plain}
\newtheorem{theorem}{Theorem}[section]
\newtheorem{corol}[theorem]{Corollary}
\newtheorem{lem}[theorem]{Lemma}
\newtheorem{lemma}[theorem]{Lemma}
\newtheorem{prop}[theorem]{Proposition}
\theoremstyle{definition}
\newtheorem{defn}[theorem]{Definition}
\theoremstyle{remark}
\newtheorem{rem}[theorem]{Remark}
\numberwithin{equation}{section}
\newcommand{\CV}{{\cal V}}
\newcommand{\N}{{\mathbf N}}
\newcommand{\C}{{\mathbf{C}}}
\newcommand{\R}{{\mathbf{R}}}
\newcommand{\Q}{{\mathbf{Q}}}
\newcommand{\Z}{{\mathbf Z}}
\newcommand{\g}{\gamma} 
\newcommand{\conv}{\operatorname{conv}}
\newcommand{\f}{\varphi}
\newcommand{\Id}{I_{\CH}}
\newcommand{\tr}{\operatorname{tr}}
\newcommand{\diag}{\operatorname{d}}
\newcommand{\ns}{{r}} 
\newcommand{\nss}{{\mathfrak{r}}} 
\newcommand{\ass}{\mathfrak{A}} 
\newcommand{\pro}{\Upsilon} 
\newcommand{\uni}{\hat\Upsilon} 
\newcommand{\K}{K} 
\begin{document}

\title{A weak spectral condition for the 
controllability of the
bilinear Schr\"odinger equation with application to the control of a rotating planar molecule}
\date{}

\author[a b c]{U.~Boscain} 
\author[d e]{M.~Caponigro} 
\author[e d]{T.~Chambrion} 
\author[c b]{M.~Sigalotti}

\affil[a]{Centre National de la Recherche Scientifique (CNRS)}
\affil[b]{CMAP, \'Ecole Polytechnique,  Route de Saclay, 91128 Palaiseau Cedex, France}
\affil[c]{INRIA, Centre de Recherche Saclay, Team GECO}
\affil[d]{INRIA, Centre de Recherche Nancy - Grand Est, Team CORIDA}
\affil[e]{Institut \'Elie Cartan, UMR 7502 Nancy-Universit\'e/CNRS, BP 239, Vand\oe uvre-l\`es-Nancy 54506,  France}

%
%

\maketitle

\begin{abstract}
In this paper we prove an approximate controllability result for the bilinear \Sch\ equation. This result requires less restrictive non-resonance hypotheses on the spectrum of the uncontrolled \Sch\ operator than those present in the literature. 
The control operator is not required to be bounded and we are able to extend the controllability result to the density matrices. 
The proof is based  on fine controllability properties of the finite dimensional Galerkin approximations and allows to get estimates for the $L^{1}$ norm of the control.
The  general controllability result is applied  to the problem of controlling  the rotation of a bipolar rigid molecule confined on a plane by means of two orthogonal external fields.
 \end{abstract}

\section{Introduction}

In this paper we are concerned  with the controllability problem for the 
Schr\"odinger equation
\begin{eqnarray}
i\frac{d\psi}{dt}=(H_0+u(t)H_1)\psi.
\label{eq-0}
\end{eqnarray}
Here $\psi$ belongs to the Hilbert sphere of a complex Hilbert space ${\cal H}$ and $H_0,H_1$ are self-adjoint operators on ${\cal H}$.
The control $u$ is scalar-valued and represents the action of an  external field.
The reference model is the one in which  $H_0=-\Delta+V(x)$, $H_1=W(x)$, where $x$ belongs to a domain $D\subset\R^n$ with suitable boundary conditions and $V,W$ are real-valued functions (identified with the corresponding multiplicative operators) characterizing  respectively the autonomous dynamics  and the coupling of the system with the control $u$. 
However, equation \r{eq-0} can be used to describe  more general controlled dynamics. For instance, a quantum particle on a Riemannian manifold subject to an external field (in this case $\Delta$ is the  Laplace--Beltrami operator) 
or a two-level ion trapped in a harmonic potential (the so-called Eberly and Law model~\cite{eberly-law,rangan, puel}).
In the last case, as in many others relevant physical situations, the operator $H_0$ cannot be written as the sum of a Laplacian plus a potential.  

Equation~\eqref{eq-0} is usually named {\it bilinear   Schr\"odinger equation} in the control community, 
the term {\it bilinear} referring to the linear dependence with respect to $\psi$ and the affine dependence with respect to $u$.  
(The term linear is reserved for systems of the form $\dot x=A x+ B u(t)$.) 
The operator
$H_0$ is usually called the \emph{drift}.

The controllability problem consists in establishing whether, for every pair of states $\psi_0$ and $\psi_1$, there exist a control $u(\cdot)$ and a time $T$ such that the solution of~\r{eq-0} with initial condition $\psi(0)=\psi_0$ satisfies $\psi(T)=\psi_1$. Unfortunately the answer to this problem is negative when $\CH$ is infinite dimensional. Indeed,  Ball, Marsden, and Slemrod proved in~\cite{bms} a result which implies (see~\cite{turinici}) that equation \r{eq-0} is not controllable in (the Hilbert sphere of) ${\cal H}$ 
Moreover, they proved that 
in the case in which  $H_0$ is the sum of the Laplacian and a potential in a domain $D$ of $\R^n$, equation~\r{eq-0}  is neither controllable in the Hilbert sphere $\mathbf{S}$ of $L^2(D,\C)$ nor in the natural functional space where the problem is formulated, namely the intersection of  $\mathbf{S}$ with the Sobolev spaces $H^2(D,\C)$ and $H^1_0(D,\C)$.
Hence one has to look for weaker controllability properties as, for instance, approximate controllability  or controllability  between 
the eigenstates of $H_0$ (which are the most relevant physical states).

However, in certain cases one can describe quite precisely the set of states that can be connected by admissible paths.
Indeed in \cite{beauchard,beauchard-coron} the authors prove that, in the case in which $H_0$ is the Laplacian on the interval $[-1,1]$, with Dirichlet boundary conditions, and $H_1$ is the operator of multiplication by $x$, 
the system is exactly controllable near the eigenstates in $H^7(D,\C)\cap \mathbf{S}$ (with suitable boundary conditions). 
%
This result was then refined in \cite{camillo}, where the authors proved that the exact controllability holds in $H^3(D,\C)\cap \mathbf{S}$, for a large class of control potentials (see also~\cite{fratelli-nersesyan}).

In dimension larger than one (for $H_0$ equal to the sum of the Laplacian and a potential) or for more general situations, the exact description of the reachable set appears to be more difficult and at the moment  only approximate controllability results are available. 

In~\cite{Schrod} an approximate controllability result for~\r{eq-0} was proved via finite dimensional geometric control techniques applied to the Galerkin approximations.  
The main hypothesis is that the spectrum of $H_0$ is discrete and without rational resonances, which means that the gaps between the eigenvalues of $H_0$ 
are $\Q$-linearly independent. Another crucial hypothesis appearing naturally is 
that the operator $H_1$ couples all eigenvectors of $H_0$.

The main advantages of that result with respect to those 
previously known
are that:
{\bf i)} it does not need $H_0$ to be of the form $-\Delta + V$; 
{\bf ii)}  it can be applied to the case in which $H_1$ is an unbounded operator; 
{\bf iii)} the control is a bounded function with arbitrarily small bound; 
{\bf iv)} it allows to prove controllability for density matrices and it can be generalized to prove approximate controllability results for a system of \Sch\ equations controlled by the same control (see~\cite{thomas-solo-paper}).

The biggest difficulty in order to apply the results given in~\cite{Schrod} to academic examples 
is that in most of the cases the spectrum is described as a simple numerical series (and hence $\Q$-linearly dependent). However, it has been proved that the hypotheses under which the approximate controllability results holds are generic~\cite{genericity-mario-paolo, corrige-mario-privat, genericity-mario-privat}. Notice that writing $H_0+u(t)H_1=(H_0+\eps H_1)+ (u(t)-\eps)H_1$ and redefining $(u(t)-\eps)$ as new control may be useful. As a matter of fact,  perturbation theory permits often to prove the $\Q$-linearly independence of the eigenvalues of $(H_0+\eps H_1)$ 
for 
most
values of $\eps$. This idea was used in \cite{Schrod} to prove the approximate controllability of the harmonic oscillator and the 3D potential  well with a Gaussian control potential.

Results similar to those presented in \cite{Schrod} have 
been obtained, with different techniques, in \cite{nersesyan} (see also \cite{ito-kunisch,beauchard-nersesyan,Nersy,fratelli-nersesyan}). 
They require less restrictive hypotheses on the spectrum of $H_0$ (which is still assumed to be discrete) but 
 they do not admit $H_1$ unbounded and do not apply to the density matrices. 
However, it should be noticed that~\cite{nersesyan} proves approximate controllability with respect to some Sobolev norm $H^s$, while the results given in~\cite{Schrod}
permit to get approximate controllability in the weaker norm $L^2$. 
As it happens for the results in~\cite{Schrod}, 
the  sufficient conditions for controllability obtained in \cite{nersesyan} are
generic.

Fewer controllability results are known in the case in which the spectrum of $H_0$ is not discrete. 
Let us mention the paper \cite{mirrahimi-continuous}, in which 
 approximate controllability is proved between wave functions corresponding to the discrete part of the spectrum (in the 1D case),
and \cite{ito-kunisch}.

In this paper we prove the approximate controllability of~\r{eq-0} under less restrictive hypotheses than those in~\cite{Schrod}.  More precisely, assume that $H_0$ has discrete spectrum $(\lambda_k)_{k\in\N}$ (possibly not simple)
and denote by $\phi_k$ an eigenvector of $H_0$ corresponding to $\lambda_k$ in such a way that $(\phi_{k})_{k\in \N}$ is an orthonormal basis of $\CH$. 
Let $\Xi$ be the subset of $\N^2$ given by all $(k_1,k_2)$ such that $\langle \phi_{k_1}, H_{1} \phi_{k_2}\rangle\ne 0$.
Assume that, for every $(j,k) \in \Xi$ such that $j\neq k$, we have $\lb_{j}\neq\lb_{k}$ (that is, degenerate energy levels  are not directly coupled by $H_{1}$).
We prove that the system is approximately controllable 
if there exists a subset $S$ of $\Xi$ such that 
the graph whose vertices are the elements of $\N$ and whose  edges are 
the elements of $S$ is connected (see Figure~\ref{fig:graphe})
 and, moreover, for 
every $(j_1,j_2)\in S$ and every $(k_1,k_2)\in \Xi$ different from $(j_1,j_2)$ and $(j_2,j_1)$, 
\begin{equation}\label{ugo-c}
|\lambda_{j_1}-\lambda_{j_2}|\ne |\lambda_{k_1}-\lambda_{k_2}|.
\end{equation}

As in \cite{Schrod}, $H_1$ is not required to be bounded and we are able to extend the controllability result to the density matrices and to simultaneous controllability (see Section~\ref{subsec:def} for precise definitions). This extension is interesting in the perspective of getting controllability results for open systems.

\begin{figure}
\begin{center}
\includegraphics[width=10cm]{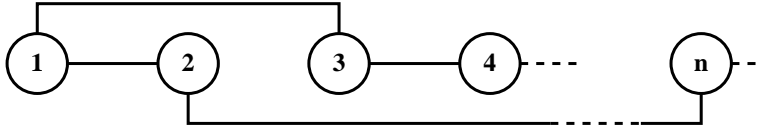}
\caption{Each vertex of the graph represents an eigenstate of $H_0$ (when the
spectrum is not simple, several nodes may be attached to the same
eigenvalue). An edge links two vertices if and only if $H_1$ connects
the corresponding eigenstates. In this example, $\langle \phi_1,
 H_1 \phi_2 \rangle $ and  $\langle\phi_1, H_1 \phi_3 \rangle $ are not
zero, while  $\langle \phi_1, H_1  \phi_4 \rangle =\langle \phi_2,
H_1  \phi_3 \rangle=\langle \phi_2, H_1  \phi_4 \rangle=0$.}
\label{fig:graphe}
\end{center}
\end{figure}

Interesting features of our result are that it permits to get $L^{1}$ estimates for the control laws
and that it does not require the spectrum of $A$ to be simple. Moreover,
beside requiring less restrictive hypotheses, this new result works better in academic examples where very often one has 
a spectrum which is resonant, but a lot of products $\langle \phi_{k_1}, H_{1} \phi_{k_2}\rangle$ which vanish. The consequence of the 
presence of these vanishing elements 
(i.e., of the smallness of $\Xi$)
is that less conditions 
of the type \eqref{ugo-c} need being verified. 

The condition on the spectrum given above is still generic and it is less restrictive than the one given in~\cite{nersesyan}, which corresponds to the case $S=\{(k_0,k)\mid k\in \N,\,k\ne k_0\}$ for some $k_0\in \N$ and where condition~\eqref{ugo-c} is required for every $(k_1,k_2)\in \N^2\setminus \{(j_1,j_2),(j_2,j_1)\}$.
Notice however that the approximate controllability result given in~\cite{nersesyan} is still in a stronger norm.

The idea of the proof is the following.
We recover approximate controllability for the system defined on an infinite dimensional Hilbert space through 
fine controllability properties of the $N$-dimensional Galerkin approximations, $N \in \mathbf{N}$, which allow us to pass to the limit as $N\to\infty$.
More precisely, we prove that, for $n,N\in \N$ with $N\gg n\gg 1$, for given initial and final conditions $\psi_0$, $\psi_1$ in the Hilbert sphere of ${\cal H}$ which are linear combinations of the first $n$ eigenvectors of $H_0$,  
it is possible to steer $\psi_0$ to $\psi_1$ in the Galerkin approximation of order $N$ in such a way that the projection on the components 
$n+1,\ldots,N$ has arbitrarily small norm along the trajectory. 
This kind of controllability for the Galerkin approximation of order $N$ is proved in two steps: firstly, thanks to a time-dependent change of variables we transform the system in a driftless one,
nonlinear  in the control, and we prove the result up to phases. The  
change of variables was already introduced in~\cite{agrachev-chambrion,Schrod}; the technical novelty of this paper 
is the convexification analysis for the transformed system, which allows to conclude the controllability with less restrictive non-resonance hypotheses.   
Secondly, 
the control of phases is obtained via a classical method, using as pivot an eigenstate of $H_0$ and exploiting the controllability (up to phases) of the time-reversed  Schr\"odinger equation. 
This last step 
requires 
some further arguments
in the case of 
simultaneous controllability.

\begin{figure}
\begin{center}
\includegraphics[width=9.3cm]{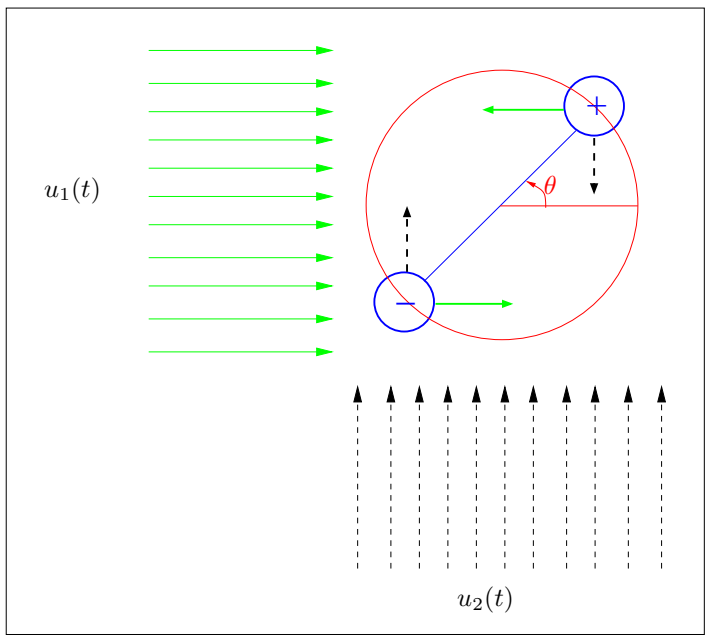}
\caption{The bipolar rigid molecule confined to a plane.}
\label{fig:molecola}
\end{center}
\end{figure}

In the second part of the paper we apply our result to the problem of controlling a bipolar rigid molecule confined on a plane by means of two electric fields constant in space and controlled in time, oriented along two orthogonal directions (see Figure~\ref{fig:molecola}).
The corresponding Schr\"odinger equation can be written as 
 \begin{equation}\label{eq-1}
i\frac{\partial\psi(\theta,t)}{\partial t}=\left(  -\frac{\partial^2}{\partial \theta^2}+u_1(t) \cos(\theta)+u_2(t) \sin(\theta) \right)\psi(\theta,t),\quad\theta\in \mathbb{S}^{1},
\end{equation}
where $\mathbb{S}^{1} = \R/2\pi\Z$.
 Notice that a controlled rotating molecule 
is the most relevant physical application 
for which  the spectrum of $H_0$ is discrete.

The system described by~\eqref{eq-1} is not controllable if we fix one control to zero. 
Indeed by parity reasons
the potential $\cos(\theta)$ does not couple an odd wave function with an even one and the potential
 $\sin(\theta)$ does not
 couple  wave functions with the same parity.

Up to our knowledge, 
the controllability result presented in this paper is the only one which can be directly applied to  system~\r{eq-1}. 
Indeed, both the  results obtained in  \cite{Schrod} and \cite{nersesyan} seem to require
sophisticated perturbation arguments in order to conclude the approximate controllability of \r{eq-1}.

The proof of the controllability of \r{eq-1} by means of the controllability result obtained in this paper is not immediate since we have to use in a suitable way the two controls. The idea is to prove that, given an initial condition $\psi_0$ which is even with respect to some $\bar \theta\in \mathbb{S}^{1}$, by varying the control $u=(u_1,u_2)\in\R^2$ along the line $\R(\cos(\bar\theta),\sin(\bar\theta))$, 
it is possible to steer it (approximately) towards any other wave function even with respect to $\bar \theta$.
In particular, it is possible to steer any eigenfunction (which is necessarily even with respect to some $\bar\theta$) to the ground state (that is, the constant function $1/\sqrt{2\pi}$), which can, in turn, be steered towards any other eigenfunction. The argument can be refined to prove approximate controllability among any pair of wave functions on the Hilbert sphere.

The structure of the paper is the following. In Section~\ref{sec:Framework} we introduce the class of systems under consideration and we discuss their well-posedness. Then, we state the main results contained in the paper. Section~\ref{sec:finitedimensional} is devoted to the case in which $\CH$ is finite dimensional. Sections~\ref{sec:conv}, \ref{sec:modulus}, and~\ref{sec:phase} contain the proof of the main results and in Section~\ref{sec:L1} we present estimates on the $L^{1}$ norm of the control. Section~\ref{sec:Example} contains an application to the infinite potential well, showing controllability and establishing $L^{1}$ estimates of the control. 
Section~\ref{sec:molecule} provides the application to the bipolar planar molecule evolving on the plane.


\section{Framework and main results}\label{sec:Framework}

\subsection{Settings and notations}

As in \cite{Schrod}, we 
 use an abstract framework instead of 
a presentation in terms of partial differential equations.
The advantage of this presentation is that it is very versatile
and applies without modification for Schr\"odinger equation on a (possibly unbounded) domain of $\mathbf{R}^n$
or on a manifold such as $
\mathbb{S}^1$ (see Section~\ref{sec:molecule}). 
{To avoid confusion, let us stress that $H_{0}$ and $H_{1}$, introduced in the introduction, are self-adjoint operators while $A = -i H_{0}$ and $B = -i H_{1}$, used in what follows, are skew-adjoint.}
%
Hereafter $\N$ denotes the set of strictly positive integers.
We also denote by $\mathbf{U}(\CH)$ the space of unitary operators on $\CH$.

\begin{defn}\label{DEF_skew_adjoint_bilinear_diagonal}
Let $\cal H$ be an Hilbert space with scalar product $\langle \cdot ,\cdot \rangle$ and  $A,B$ be two (possibly unbounded) linear operators on $\cal H$,
with domains $D(A)$ and $D(B)$. Let $U$ be a subset of $\mathbf{R}$.
Let us introduce the formal controlled equation
\begin{equation} \label{eq:main}
\frac{d\psi}{dt}(t)=(A+u(t)B) \psi(t), \quad u(t) \in U.
\end{equation}
We say that $(A,B,U,\Phi)$ satisfies $(\mathfrak{A})$ 
if the following assumptions are verified:
\begin{itemize}
\item[ ] 
\begin{description}
\item[($\mathfrak{A}1$)] $ \Phi = (\phi_k)_{k \in \mathbf{N}}$ is an Hilbert basis of $\cal H$
made of eigenvectors of $A$ associated with the family of eigenvalues $(i \lb_{k})_{k \in \mathbf{N}}$; 
\item[($\mathfrak{A}2$)] $\phi_k \in D(B)$ for every $k \in
\mathbf{N}$;
\item[($\mathfrak{A}3$)] 
$A+uB:\mathrm{span}\{\phi_k \mid k\in\N\}\to \CH$ is essentially skew-adjoint for every $u\in U$;
\item[($\mathfrak{A}4$)] if $j\neq k$ and $\lb_{j} = \lb_{k}$ then $\la \phi_{j},B\phi_{k} \ra = 0$.
\end{description}
\end{itemize}
\end{defn}

\begin{rem}
If $A$ has simple spectrum then $(\mathfrak{A}4)$ is verified. 
If all the eigenvalues of $A$ have finite multiplicity, then, up to a change of basis,
hypothesis $(\mathfrak{A}4)$ is a consequence of $(\mathfrak{A}1-2-3)$. 
\end{rem}

A crucial consequence of assumption $(\ass3)$ is that, for every constant $u$ in $U$,
$A+uB$
generates a group
of unitary transformations $e^{t(A+uB)}:{\cal H}\rightarrow {\cal H}$. The unit sphere
of $\cal H$ is invariant for all these transformations.

\begin{defn}\label{DEF_Sol_skew_adjoint_bilinear_system}
Let $(A,B,U,\Phi)$ satisfy $(\mathfrak{A})$ and
$u:[0,T]\rightarrow U$
be piecewise constant.
The \emph{solution} of (\ref{eq:main}) with initial condition $\psi_0 \in {\cal H}$ is
\begin{equation}\label{EQ_def_solution}
\psi(t)=\pro^{u}_{t} (\psi_{0}),
\end{equation}
where $\pro^{u}:[0,T]\rightarrow \mathbf{U}({\cal H})$ is  the \emph{propagator} of~\eqref{eq:main} that associates, with every $t$ in $[0,T]$,  the unitary linear transformation 
 $$
\pro^{u}_{t} =    e^{(t-\sum_{l=1}^{j-1} t_l)(A +u_j B)}\circ e^{t_{j-1}(A +u_{j-1} B)}\circ \cdots \circ e^{t_1(A+u_1  B)},
$$
where $\sum_{l=1}^{j-1} t_l\leq t<\sum_{l=1}^{j} t_l$ and
$u(\tau)=u_j$ if $\sum_{l=1}^{j-1} t_l\leq \tau<\sum_{l=1}^{j} t_l$.
\end{defn}

The notion of solution introduced above makes sense in very degenerate situations and can be enhanced when $B$ is bounded (see~\cite{bms} and references therein).

Note that, since
$$
\la \phi_{n}, e^{t(A+uB)} \psi_{0}\ra = \la  e^{-  t(A+uB)} \phi_{n}, \psi_{0} \ra\,,
$$
for every $n\in \N$,
$\psi_{0} \in \CH$, and $u \in U$, 
then,
for every solution $\psi(\cdot)$
 of~\eqref{eq:main}, the function $t \mapsto \la \psi(t),\phi_{n} \ra $ is absolutely continuous and satisfies, for almost every $t\in [0,T]$,
\begin{equation}\label{EQ_very_weak}
\frac d{d t}\la \phi_n, \psi(t)\ra=-\la (A+u(t)B)\phi_n, \psi(t)\ra.
\end{equation}

\subsection{Main results}\label{subsec:def}

As already recalled in the introduction, exact controllability 
is hopeless in general. 
Several relevant definitions of approximate controllability are available. 
The first one is the standard approximate controllability.

\begin{defn}\label{DEF_pointwise_controllabilty}
Let $(A,B,U,\Phi)$ satisfy $(\ass)$.
We say that \eqref{eq:main} is \emph{approximately controllable}
if for every $\psi_0,\psi_1$ in the unit sphere of $\cal H$
 and every $\eps>0$ there exist a piecewise constant control function $u:[0,T] \to U$ such that 
$
\|\psi_1-  \pro^{u}_{T}(\psi_{0})\| <\eps.
$
\end{defn}

Recall that $A$ has purely imaginary eigenvalues $(i\lb_{k})_{k\in \mathbf{N}}$ with associated eigenfunctions $(\phi_{k})_{k\in \mathbf{N}}$.
Next we introduce the notion of \emph{connectedness chain}, whose existence is crucial for our result. 


\begin{defn}
 Let $(A,B,U,\Phi)$ satisfy $(\mathfrak{A})$. A subset $S$ of
$\mathbf{N}^2$ \emph{couples} two levels $j,k$ in $\mathbf{N}$,
if
there exists a finite sequence $\big ((s^{1}_{1},s^{1}_{2}),\ldots,(s^{p}_{1},s^{p}_{2}) \big )$
in $S$ such that
\begin{description}
\item[$(i)$] $s^{1}_{1}=j$ and $s^{p}_{2}=k$;
\item[$(ii)$] $s^{j}_{2}=s^{j+1}_{1}$ for every $1 \leq j \leq p-1$;
\item[$(iii)$] $\langle  \phi_{s^{j}_{1}}, B \phi_{s^{j}_{2}}\rangle \neq 0$ for $1\leq j \leq
p$.
\end{description}

$S$ is called a \emph{connectedness chain} (respectively $m$-connectedness chain)  for $(A,B,U,\Phi)$ if $S$ (respectively $S \cap \{1,\ldots,m\}^{2}$) couples every pair of levels in $\mathbf{N}$ (respectively in $\{1,\ldots,m\}$).

A connectedness chain is said to be \emph{non-resonant} if for
every $(s_1,s_2)$ in $S$, $|\lb_{s_1}-\lb_{s_2}|\neq |\lb_{t_1}-\lb_{t_2}|$ for every $(t_{1},t_{2})$ in
$\mathbf{N}^2\setminus\{(s_1,s_2),(s_2,s_1)\}$ such that $\langle \phi_{t_{2}}, B \phi_{t_{1}}\rangle  \neq 0$.
\end{defn}


\begin{theorem}\label{THE_Control_collectively}
 Let $\delta >0$ and let
  $(A,B,[0,\delta],\Phi)$ satisfy $(\mathfrak{A})$. 
 If there exists a non-resonant connectedness chain for $(A,B,[0,\delta],\Phi)$
then
\eqref{eq:main} is approximately controllable.
\end{theorem}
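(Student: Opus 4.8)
The plan is to reduce the infinite-dimensional approximate controllability statement to a sequence of finite-dimensional controllability problems for the Galerkin approximations and then pass to the limit. Concretely, I would first establish the following key reduction: given $\psi_0,\psi_1$ that are finite linear combinations of the first $n$ eigenvectors $\phi_1,\dots,\phi_n$, and given $\eps>0$, it suffices to find $N\gg n$ and a piecewise constant control $u$ such that the trajectory of the order-$N$ Galerkin system steers $\psi_0$ to within $\eps/2$ of $\psi_1$ while keeping the $L^\infty$ norm of the projection onto the high modes $\phi_{n+1},\dots,\phi_N$ small along the whole trajectory. The ``good Galerkin approximation'' property — that trajectories of the true system and of the order-$N$ truncation stay uniformly close on bounded control-variation sets, uniformly in $N$ — is what lets one transfer this to the genuine Schr\"odinger equation; since general $\psi_0,\psi_1$ on the Hilbert sphere can be approximated by finite combinations of eigenvectors, a density argument then closes the proof.

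The heart of the matter is proving the required controllability of the Galerkin systems. Here I would follow the two-step strategy the introduction announces. Step one: apply the time-dependent change of variables (the ``interaction picture'' transformation from \cite{agrachev-chambrion,Schrod}) that removes the drift $A$, turning the system into a driftless system that is nonlinear in the control, of the form $\dot x = \sum (\text{oscillating coefficients})\, u\, (\text{matrix})\, x$. One then performs a convexification / averaging analysis: by choosing controls of the form $u(t)=v\cos((\lambda_{s_1}-\lambda_{s_2})t)$ resonant with a chosen edge $(s_1,s_2)$ of the connectedness chain, the non-resonance hypothesis \eqref{ugo-c} guarantees that, after averaging, only the matrix element corresponding to that edge survives, while all other oscillating terms average to zero. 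This yields, in the closure of the attainable set, the one-parameter group generated by the elementary skew-Hermitian matrix $e_{s_1 s_2}$ (up to a known sign/phase). Running through the edges of the connectedness chain, the connectedness property ensures these elementary generators generate the full Lie algebra acting transitively on the sphere of the relevant finite-dimensional space, giving controllability up to global phases on each mode. Step two: recover the phases by the classical trick of using an eigenstate of $A$ as a pivot and exploiting that the time-reversed equation is also controllable up to phases, so that concatenating a forward and a backward maneuver adjusts phases arbitrarily.

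I expect the main obstacle to be the convexification analysis for the transformed system under the weakened hypotheses — i.e., showing rigorously that with only the non-resonance condition \eqref{ugo-c} along the edges of $S$ (rather than full $\Q$-linear independence of the spectral gaps), the averaged/limit dynamics still isolates exactly the desired elementary generator and that the approximation is uniform enough to be chained across all $N$ and all edges without the high-mode leakage building up. The delicate points are controlling the error terms in the averaging (they are small only for fast oscillations / long times, so one must check the leakage onto modes $n+1,\dots,N$ stays below $\eps$ uniformly in $N$) and handling possible degeneracies of the spectrum, which is where assumption $(\mathfrak{A}4)$ and the precise wording of the non-resonance condition (restricting the competing pairs $(t_1,t_2)$ to those with $\langle\phi_{t_2},B\phi_{t_1}\rangle\neq 0$) are used. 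The phase-control step is comparatively routine in the single-system case; the introduction signals it needs extra care for simultaneous controllability, but that is not required for Theorem~\ref{THE_Control_collectively} as stated.
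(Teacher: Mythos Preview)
Your overall architecture matches the paper's: Galerkin reduction, interaction-picture change of variables to remove the drift, isolate one edge of the connectedness chain at a time via the non-resonance hypothesis, generate $\mathfrak{su}(n)$ from the resulting elementary matrices, then tune phases by the pivot-eigenstate/time-reversal trick. The paper indeed deduces Theorem~\ref{THE_Control_collectively} as the $\ns=1$ case of Theorem~\ref{THE_Control_wave_function}, proved along exactly these lines in Sections~\ref{sec:conv}--\ref{sec:phase}.

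There is, however, a concrete gap in your convexification step. You propose to isolate the edge $(s_1,s_2)$ by feeding in a resonant control $u(t)=v\cos((\lambda_{s_1}-\lambda_{s_2})t)$ and averaging. But the admissible control set is $U=[0,\delta]$: the control must be nonnegative, so such oscillating controls are not admissible, and if you shift them (say $u=c+v\cos(\omega t)$ with $c\ge v$) the constant part $c$ reintroduces a drift $cB$ whose off-diagonal interaction-picture entries oscillate but do not average to zero over the other edges. This one-sided constraint is precisely why the paper does \emph{not} use time-resonant controls. Instead it first performs the time-reparametrization of Section~\ref{SEC_time_reparametrization}, swapping the roles of $A$ and $B$ to obtain $\dot\psi=(uA+B)\psi$ with $u\in[1/\delta,\infty)$, and then passes to the interaction picture in the variable $v(t)=\int_0^t u$, so that the system becomes $\dot q=\vartheta_N(t,v)q$ with $v$ any nondecreasing function of slope $\ge 1/\delta$. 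The convexification (Lemma~\ref{LEM_convexification} and Proposition~\ref{PRO_tracking_elementaire}) is then carried out by choosing a sequence of \emph{values} $w_\alpha$ for $v$ so that the arithmetic means of $e^{i(\lambda_l-\lambda_m)w_\alpha}$ vanish for all competing pairs and survive (with a uniform lower bound $\nu>0$) for the chosen edge; a piecewise-linear $v$ hitting these values, with slope at least $1/\delta$, realizes an admissible $u$. This device is the ``technical novelty'' the introduction announces, and it is exactly what circumvents the sign constraint your cosine approach runs into.

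A smaller point: the paper does not establish a blanket ``good Galerkin approximation'' uniform in $N$ and then transfer. Rather, Proposition~\ref{PRO_tracking_dim_inf} estimates directly, for the specific controls just constructed, the leakage of the infinite-dimensional trajectory onto modes $>n$, using $\sum_{k>N}|b_{jk}|^2<\mu$ (this is where $(\mathfrak{A}2)$ enters) together with the fact that the resolvent of $(\Theta_N)$ converges to an operator preserving the norm of the first $n$ components. Your formulation is morally right, but the order of quantifiers matters: $N$ is chosen after $n$ and the tolerance $\mu$, not uniformly.
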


Theorem~\ref{THE_Control_collectively} is a particular case of Theorem~\ref{THE_Control_wave_function}, stated in the next section.


\begin{rem}
 Notice that in the assumptions of
 Theorem~\ref{THE_Control_collectively} 
we do not require that the eigenvalues of $A$ are simple.
Take for instance ${\cal H}=\C^4$, $U=[0,1]$, and 
$$
A=\left(\begin{array}{cccc}
i& 0& 0& 0\\
 0& 2i& 0& 0\\
 0& 0& 4i& 0\\
  0&0& 0& 4i\end{array}\right),
  \qquad
B=\left(\begin{array}{cccc}
0& 1& 1& 0\\
 -1& 0& 0& 1\\
 -1& 0& 0& 0\\
  0&-1& 0& 0\end{array}\right).
$$
A connectedness  chain  is given by $\{(1,2),(2,1),(1,3),(3,1),(2,4),(4,2)\}$. The corresponding eigenvalue gaps are $|\lambda_2-\lambda_1|=1$, $|\lambda_3-\lambda_1|=3$, and $|\lambda_4-\lambda_2|=2$. Hence, the connectedness chain 
is non-resonant. 

%

\end{rem}

The following proposition gives an estimate of the $L^{1}$ norm of the control steering~\eqref{eq:main} from one eigenvector to an $\eps$-neighborhood of another.
A generalization of this proposition is given by Theorem~\ref{PRO_L1_estimates}.

\begin{prop}\label{prop:L1estimatessimple}
Let $\delta >0$. Let
  $(A,B,[0,\delta],\Phi)$ satisfy $(\mathfrak{A})$
   and admit  a non-resonant chain of connectedness $S$.
Then for every  $\eps>0$ and $(j,k)\in S$ there exist a piecewise constant control $u:[0,T_u]\rightarrow [0,\delta]$ and $\theta \in \R$ such that 
$\| \pro^u_{T_u}(\phi_j)-e^{i \theta} \phi_{k}\|<\eps$ and
$$
\|u\|_{L^1}\leq \frac{5 \pi}{4  |\la \phi_{k}, B \phi_{j} \ra |}\,.
$$
\end{prop}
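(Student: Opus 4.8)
\textbf{Proof proposal for Proposition~\ref{prop:L1estimatessimple}.}

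The plan is to reduce the problem to a two-level system by exploiting the non-resonance condition and then produce an explicit control that realizes a rotation in the $(\phi_j,\phi_k)$-plane. First I would pass to the time-dependent change of variables (the one recalled in the introduction, from~\cite{agrachev-chambrion,Schrod}) which removes the drift $A$ and turns~\eqref{eq:main} into a driftless system, nonlinear in the control, whose right-hand side in the $\Phi$-basis has entries of the form $u(t)e^{i(\lambda_m-\lambda_n)t}\langle\phi_m,B\phi_n\rangle$. The key point is that the non-resonance hypothesis on the pair $(j,k)\in S$ guarantees that the frequency $|\lambda_j-\lambda_k|$ does not appear in any other coupled pair of indices; hence, choosing the control to oscillate resonantly with this single frequency (a pulse of the form $u(t)=v\cos((\lambda_j-\lambda_k)t+\varphi)$, truncated to a bounded interval and rescaled to take values in $[0,\delta]$ by adding a constant offset and slicing appropriately), one obtains by an averaging/rotating-wave argument that, on the time scale considered, the dynamics restricted to the span of $\phi_j,\phi_k$ is, up to arbitrarily small error, a rotation at angular speed proportional to $|\langle\phi_k,B\phi_j\rangle|$, while all other components stay $\varepsilon$-close to where they started.

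Next I would make the bookkeeping precise. To rotate $\phi_j$ onto $e^{i\theta}\phi_k$ one needs to sweep an angle $\pi/2$ in the effective two-dimensional rotation. If the effective angular velocity is $c\,|\langle\phi_k,B\phi_j\rangle|$ where $c$ is the mean value of the (nonnegative, offset) control profile, the total time is $T_u \sim \frac{\pi/2}{c\,|\langle\phi_k,B\phi_j\rangle|}$ and the $L^1$ norm of $u$ over $[0,T_u]$ is $c\,T_u \sim \frac{\pi/2}{|\langle\phi_k,B\phi_j\rangle|}$, up to a factor coming from the constraint $u\ge 0$ (a pure cosine has mean zero, so one must use $|\cos|$ or an offset, which inflates the $L^1$ cost by the ratio between the $L^1$ and the relevant effective amplitude of the profile; the factor $\tfrac{2}{\pi}\cdot\tfrac{5\pi}{4}\cdot\ldots$ is exactly the kind of constant that $\tfrac{5\pi}{4}$ encodes). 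So the concrete computation I would carry out is: pick the explicit periodic profile, compute its $L^1$ norm per period versus the amplitude of its resonant Fourier component, optimize, and check that the constant obtained is $\le \tfrac{5\pi}{4}$; then choose the number of periods / total time so that the rotation angle is $\pi/2$ modulo $2\pi$, pick up the resulting phase $e^{i\theta}$, and absorb the truncation and rotating-wave errors into $\varepsilon$ by taking the control amplitude small and the time long (this is where assumption $(\ass3)$ and the well-posedness of Section~\ref{sec:Framework} are used to make the estimates on $\langle\phi_n,\psi(t)\rangle$ from~\eqref{EQ_very_weak} rigorous).

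The main obstacle I expect is controlling the error terms uniformly: the rotating-wave approximation is only good when the control amplitude is small relative to the spectral gaps, but making the amplitude small forces $T_u$ large, and over a long time the small off-resonant contributions from the (possibly infinitely many, if $\CH$ is infinite-dimensional) other eigenvectors could accumulate. This is precisely where the non-resonance of the connectedness chain does the heavy lifting — it ensures no \emph{secular} (resonant) growth in any unwanted direction — but one still needs a quantitative estimate showing the bounded, non-secular perturbations stay below $\varepsilon$; I would handle this by first truncating to a finite-dimensional Galerkin approximation of sufficiently high order $N$ (legitimate by the passage-to-the-limit machinery underlying Theorem~\ref{THE_Control_collectively}), proving the estimate there with constants independent of $N$, and then transferring back. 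A secondary nuisance is the sign constraint $u\in[0,\delta]$: one cannot use a mean-zero control, so I would write $u = $ (small constant) $+$ (small resonant oscillation), note that the constant part contributes a harmless extra drift that can be conjugated away (it only shifts the effective frequencies, and $\delta$ can be chosen as small as needed since the statement fixes $\delta>0$ but not its size — rather, one works within the given $[0,\delta]$ and scales down), and keep careful track of how this offset enters the $L^1$ bound, which is the source of the slightly-larger-than-$\pi/2$ constant $\tfrac{5\pi}{4}$.
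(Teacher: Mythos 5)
The paper's proof is a short observation on top of the machinery of Sections~\ref{sec:conv} and~\ref{sec:modulus}, and it does not involve any oscillating control pulse. The time-reparametrization $\mathcal{P}$ (Propositions~\ref{prop:4321} and~\ref{PRO_reparam}) turns the $L^1$ norm of a control $u$ for $\dot\psi=(A+uB)\psi$ into the transfer \emph{time} of the reparametrized system $\dot\psi=(\mathcal{P}(u)A+B)\psi$; this time is then bounded, via the tracking results of Propositions~\ref{PRO_tracking_elementaire} and~\ref{PRO_tracking_dim_inf}, by the transfer time of the auxiliary convexified system $(\Sigma_m)$. A \emph{constant} control in $(\Sigma_m)$, namely $\nu|b_{jk}|\bigl(e^{(m)}_{jk}-e^{(m)}_{kj}\bigr)$, sends $\phi_j^{(m)}$ to $-\phi_k^{(m)}$ in time $\frac{\pi}{2\nu|b_{jk}|}$, and the estimate $\nu>2/5$ of Remark~\ref{REM_valeur_nu} gives $\frac{\pi}{2\nu|b_{jk}|}<\frac{5\pi}{4|b_{jk}|}$. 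So the constant $\frac{5\pi}{4}$ encodes exactly the efficiency loss $\nu$ of the convexification Lemma~\ref{LEM_convexification}, not a rotating-wave factor.

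Your rotating-wave route aims at the same geometric picture (a resonant rotation in the $(\phi_j,\phi_k)$-plane) but misses the $\mathcal{P}$ trick that makes the $L^1$ bound essentially automatic, and the arithmetic in your sketch does not produce the stated constant. You set the effective Rabi rate to $c|\langle\phi_k,B\phi_j\rangle|$ with $c$ the \emph{mean} of the nonnegative offset profile; in a rotating-wave computation the rate is half of $|b_{jk}|$ times the \emph{amplitude} of the Fourier component of $u$ at frequency $\lambda_j-\lambda_k$, so with $u=c(1+\cos\omega t)$ (mean $c$, resonant amplitude $c$) the rate is $\tfrac{c}{2}|b_{jk}|$, giving $\|u\|_{L^1}\approx\pi/|b_{jk}|$ rather than your $\frac{\pi}{2|b_{jk}|}$. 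You then guess at an ``inflation'' factor rather than derive it, and the number you float ($\tfrac{5}{2}$) has nothing to do with the actual origin of $\frac{5\pi}{4}$, which is the bound $\nu>2/5$. The rotating-wave error analysis in infinite dimensions (absence of secular growth, Galerkin truncation) is acknowledged but not carried out, whereas the paper inherits it for free from Propositions~\ref{PRO_tracking_elementaire} and~\ref{PRO_tracking_dim_inf}. A careful version of your approach could plausibly deliver the slightly sharper constant $\pi/|b_{jk}|$, but that would require a fresh quantitative averaging argument that the proposal does not supply.
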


\subsection{Simultaneous controllability and controllability in the sense of density matrices}

We define now a notion of controllability in the sense of density matrices.
Recall that 
 a density matrix $\rho$ 
 is a non-negative, self-adjoint operator of trace class whose trace is normalized to one.
Its time evolution 
is determined by 
$$
\rho(t)=\pro^{u}_{t} \rho(0) \pro^{u^{\ast}}_{t}
$$
where $ \pro^{u^{\ast}}_{t}$ is the adjoint of $ \pro^{u}_{t}$.
Notice that the spectrum of $\rho(t)$ is constant along the motion, since, for every $t$, $\rho(t)$ is unitarily equivalent to $\rho(0)$.

\begin{defn}\label{DEF_density_matrices_controllabilty}
 Let $(A,B,U,\Phi)$ satisfy $(\ass)$.
We say that \eqref{eq:main} is \emph{approximately controllable in the sense of the density matrices} 
if for every pair of unitarily equivalent density matrices $\rho_0,\rho_1$ 
 and every $\eps>0$ there exists a piecewise constant control $u:[0,T]\rightarrow U$ such that
 $$\left \|  \rho_{1} - \pro^{u}_T \rho_0 \pro^{u*}_{T} \right \|<\eps,$$
in the sense of the operator norm induced by the Hilbert norm of $\cal H$.
\end{defn}

\begin{defn} \label{DEF_simultaneous_controllability}
 Let $(A,B,U,\Phi)$ satisfy $(\ass)$. 
We say that \eqref{eq:main} is \emph{approximately simultaneously controllable} if for every $\ns$ in $\N$, $\psi_1,\ldots,\psi_{\ns}$ in $\CH$, $\uni$ in $\mathbf{U}(\CH)$, and $\eps>0$  there exists a piecewise constant control $u:[0,T]\rightarrow U$
 such that, for every $1\leq k \leq \ns$,
$$
\left \| \uni \psi_k - \pro^{u}_T \psi_k \right \|<\eps.
$$
\end{defn}

The following result is proved in Sections~\ref{sec:conv},~\ref{sec:modulus}, and~\ref{sec:phase}.

\begin{theorem}\label{THE_Control_wave_function}
Let $\delta >0$ and let
  $(A,B,[0,\delta],\Phi)$ satisfy $(\mathfrak{A})$. 
If there exists a non-resonant connectedness chain for $(A,B,[0,\delta],\Phi)$, then
\eqref{eq:main} is approximately simultaneously controllable.
\end{theorem}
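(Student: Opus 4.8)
The strategy is the one outlined in the introduction: reduce the infinite-dimensional simultaneous controllability to a controllability statement for the finite-dimensional Galerkin approximations, with a uniform estimate controlling the leakage of mass into high-frequency modes. First I would fix $\ns\in\N$, test functions $\psi_1,\dots,\psi_\ns$, a target $\uni\in\mathbf{U}(\CH)$, and $\eps>0$. Since finite-rank operators whose columns are finite linear combinations of the $\phi_k$ are dense in $\mathbf{U}(\CH)$ in the strong sense needed, I may assume all $\psi_k$ lie in $\spann\{\phi_1,\dots,\phi_n\}$ and that $\uni\psi_k$ does as well, for some large $n$; a further reduction lets one even take $\uni$ to be (close to) a permutation-like unitary, but the key content is the next step.

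\textbf{The Galerkin step.} For $N\gg n$ let $A^{(N)},B^{(N)}$ be the compressions of $A,B$ to $\spann\{\phi_1,\dots,\phi_N\}$, giving the bilinear system on the sphere of $\C^N$. Invoking the finite-dimensional results of Section~\ref{sec:finitedimensional} — which is exactly where the non-resonant connectedness chain hypothesis is used, via the change of variables of \cite{agrachev-chambrion,Schrod} turning the system into a driftless system nonlinear in the control, followed by the convexification analysis — one obtains, for each $N$, a piecewise constant control $u_N$ steering $\psi_1,\dots,\psi_\ns$ simultaneously to within $\eps/2$ of $\uni\psi_1,\dots,\uni\psi_\ns$ in $\C^N$, and moreover doing so in such a way that the components $n+1,\dots,N$ stay uniformly small along the whole trajectory. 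The control of the phases (the last coordinate of the problem "up to phases") is obtained by the pivot argument: use an eigenstate $\phi_{k_0}$ as a hub, exploit that the time-reversed equation is again of the same form, and correct each phase by free evolution $e^{tA}$ combined with short control pulses; for simultaneous controllability this needs the extra observation that one can realize independent phase shifts on the $\ns$ vectors by iterating the procedure.

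\textbf{Passing to the limit.} It remains to transfer $u_N$ back to the genuine infinite-dimensional propagator $\pro^{u_N}$. Here one uses the a priori estimate that, on any finite time horizon and for piecewise constant controls bounded by $\delta$, the difference between the true solution of \eqref{eq:main} and its Galerkin truncation is small provided the truncation level $N$ is large — this is the standard estimate (see \cite{Schrod,chambrion-et-al}) exploiting $(\ass3)$ and $\phi_k\in D(B)$, together with the uniform-smallness-of-tail property that the Galerkin step was engineered to provide, so that the interaction term $u B$ does not feed appreciable mass above level $N$. Choosing $N$ large enough that this Galerkin error is below $\eps/2$ and combining with the $\eps/2$ from the Galerkin step yields $\|\uni\psi_k-\pro^{u_N}_T\psi_k\|<\eps$ for every $k$, which is the claim.

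\textbf{Main obstacle.} The delicate point is not the soft density/limit arguments but the finite-dimensional result that the Galerkin control can be taken with uniformly controlled high-frequency leakage \emph{while} only using the non-resonant connectedness chain rather than full $\Q$-linear independence of the gaps: the convexification argument must show that, in the driftless transformed system, the directions associated with the edges $(s_1,s_2)\in S$ generate (after taking convex hull / Lie-type closure) enough motions, with the resonance condition \eqref{ugo-c} ensuring that each such direction can be isolated by an appropriate averaging (a rotating-wave type selection) without spurious coupling through other pairs in $\Xi$. That analysis is carried out in Sections~\ref{sec:conv} and~\ref{sec:modulus}; everything else in the present theorem is assembled from it plus the phase-correction construction of Section~\ref{sec:phase}.
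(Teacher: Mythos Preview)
Your outline of the convexification and modulus-tracking part is broadly correct and matches the paper's Sections~\ref{sec:conv}--\ref{sec:modulus}. The genuine gap is in the phase-tuning step for \emph{simultaneous} control. For a single wave function the pivot argument you sketch (steer $\psi_0$ near $e^{i\theta}\phi_{\bar k}$, run free evolution $e^{\tau A}$ to adjust the phase, then steer the time-reversed system from $\psi_1$ to $e^{i\tilde\theta}\phi_{\bar k}$ and concatenate) is exactly what the paper does in Section~\ref{phases-1}. But for $\ns\geq 2$ vectors your assertion that ``one can realize independent phase shifts on the $\ns$ vectors by iterating the procedure'' is not justified and, as stated, does not work: once the $\ns$ vectors have been parked at $\ns$ orthogonal eigenvectors $\phi_{\bar k_1},\dots,\phi_{\bar k_\ns}$, any free evolution $e^{\tau A}$ rotates \emph{all} of them simultaneously by $e^{i\tau\lambda_{\bar k_j}}$, so the $\ns$ phases are coupled through the single real parameter $\tau$. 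Iterating the single-vector construction does not decouple them, because each pass again uses a one-parameter free evolution and each control segment moves all $\ns$ vectors at once.

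The paper's actual mechanism (Section~\ref{phases-2}) is different and requires a nontrivial ingredient you have not identified: one must choose the pivots $\phi_{\bar k_1},\dots,\phi_{\bar k_\ns}$ so that the one-parameter orbit ${\cal C}(\bar k_1,\dots,\bar k_\ns)=\{e^{tA}\phi_{\bar k_1}+\cdots+e^{tA}\phi_{\bar k_\ns}\mid t\in\R\}$ is $\eta$-dense in the full torus ${\cal T}(\bar k_1,\dots,\bar k_\ns)$. Only then does a single free-evolution segment suffice to match \emph{all} $\ns$ phases at once (Proposition~\ref{prop:phasetuning}). Establishing that such pivots always exist is Lemma~\ref{lem:1234}, a genuine number-theoretic step treated in four separate cases according to whether the spectrum is bounded and to the value of $\dim_\Q\spann_\Q(\lambda_k)_{k\in\N}$; the bounded, finite-$\Q$-dimension cases in particular need careful period computations and are not subsumed by any iteration of the single-vector argument. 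Your ``Main obstacle'' paragraph correctly locates the convexification difficulty for the modulus part, but the simultaneous phase-tuning is a second, independent obstacle that your plan does not address.
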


Simultaneous controllability implies controllability in the sense of density matrices (see
Proposition~\ref{prop:relations}). Hence we have the following.

\begin{corol}\label{THE_Control_density_matrices}
Let $\delta >0$ and let
  $(A,B,[0,\delta],\Phi)$ satisfy $(\mathfrak{A})$. 
 If there exists a non-resonant connectedness chain for $(A,B,[0,\delta],\Phi)$, then
\eqref{eq:main} is approximately controllable in the sense of the density matrices.
\end{corol}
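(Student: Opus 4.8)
The plan is to derive Corollary~\ref{THE_Control_density_matrices} directly from Theorem~\ref{THE_Control_wave_function} by showing that approximate simultaneous controllability implies approximate controllability in the sense of density matrices. This is precisely the content of the forward-referenced Proposition~\ref{prop:relations}, so the task is to spell out the reduction. First I would fix a pair of unitarily equivalent density matrices $\rho_0,\rho_1$ and a tolerance $\eps>0$, and write $\rho_1 = W \rho_0 W^{*}$ for some $W\in\mathbf{U}(\CH)$. Because $\rho_0$ is trace class with trace one, its spectrum consists of eigenvalues $(p_k)_{k}$ with $\sum_k p_k = 1$; truncating the spectral expansion, I would choose $\ns$ large enough that the tail $\sum_{k>\ns} p_k$ is as small as desired, so that $\rho_0$ is approximated in operator norm by the finite-rank operator $\rho_0^{(\ns)} = \sum_{k\le \ns} p_k\,|\eta_k\rangle\langle\eta_k|$, where $(\eta_k)$ is an orthonormal eigenbasis of $\rho_0$.

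Next I would invoke Theorem~\ref{THE_Control_wave_function} with the target unitary $\hat\Upsilon = W$ and the data $\psi_k = \eta_k$ for $1\le k\le \ns$: since a non-resonant connectedness chain exists by hypothesis, there is a piecewise constant control $u:[0,T]\to[0,\delta]$ with $\|W\eta_k - \pro^{u}_{T}\eta_k\| < \eps'$ for every $k\le \ns$, where $\eps'$ will be chosen at the end. The key estimate is then a perturbation bound: writing $P = \pro^{u}_{T}$ (a unitary) one has
\begin{equation}\label{eq:rho-est}
\big\| \rho_1 - P\rho_0 P^{*}\big\| \le \big\| W\rho_0^{(\ns)}W^{*} - P\rho_0^{(\ns)}P^{*}\big\| + \big\| \rho_0 - \rho_0^{(\ns)}\big\|\,\big(\|W\|^2 + \|P\|^2\big),
\end{equation}
and the first term is bounded by $\sum_{k\le \ns} p_k \big\| W\eta_k\langle W\eta_k|\cdot\rangle - P\eta_k\langle P\eta_k|\cdot\rangle\big\|$. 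Each summand is controlled using the elementary identity $\||a\rangle\langle a| - |b\rangle\langle b|\| \le \|a-b\|(\|a\|+\|b\|)$ valid for unit-norm-comparable vectors, which here gives a bound of order $2\eps'$ per term since $W\eta_k$ and $P\eta_k$ are unit vectors. Summing against the probabilities $p_k$ (which total at most one) yields a bound of order $2\eps'$ for the first term of~\eqref{eq:rho-est}, while the tail term is $\le 2\sum_{k>\ns}p_k$.

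Putting the pieces together: given $\eps>0$, first pick $\ns$ so that $\sum_{k>\ns}p_k < \eps/4$, then apply Theorem~\ref{THE_Control_wave_function} with $\eps' = \eps/4$ to obtain the control $u$; inequality~\eqref{eq:rho-est} then gives $\|\rho_1 - \pro^{u}_{T}\rho_0 \pro^{u*}_{T}\| < \eps$, which is the claim. I do not expect a genuine obstacle here: the only mildly delicate point is that the density matrix is in general infinite rank, so one must be careful to truncate in trace norm (hence in operator norm) before applying the finite-data simultaneous controllability statement, and to track that the operator norm of a rank-one correction $|a\rangle\langle a|-|b\rangle\langle b|$ is genuinely $O(\|a-b\|)$ rather than $O(1)$. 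Everything else is a routine triangle-inequality estimate, and the substantive work has already been done in proving Theorem~\ref{THE_Control_wave_function}.
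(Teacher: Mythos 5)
Your proof is correct and follows essentially the same route as the paper: truncate the spectral decomposition of $\rho_0$, apply simultaneous controllability to finitely many vectors, then control the rank-one perturbations via $\||a\rangle\langle a|-|b\rangle\langle b|\|\le\|a-b\|(\|a\|+\|b\|)$. The paper proves the corollary by citing Proposition~\ref{prop:relations}, whose implication $(i)\Rightarrow(ii)$ does exactly what you do, with one small structural difference. That proposition's hypothesis $(i)$ is phrased only in terms of the fixed eigenbasis $\Phi=(\phi_k)$ of the drift, so the paper must first approximate each eigenvector $v_j$ of $\rho_0$ by a finite linear combination $\sum_{k\le\ns}\langle\phi_k,v_j\rangle\phi_k$, apply $(i)$ to the $\phi_k$ with tolerance $\eps/\ns$, and then estimate $\|\uni(v_j)-\pro^u_T(v_j)\|$. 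You instead apply Theorem~\ref{THE_Control_wave_function} directly to the eigenvectors $\eta_k$ of $\rho_0$ (the definition of approximate simultaneous controllability allows arbitrary vectors $\psi_k$), which skips the intermediate $\Phi$-expansion step and gives a cleaner $2\eps'$ bound per term. Both are valid; the paper's extra step is there because Proposition~\ref{prop:relations} is intentionally stated with the weaker, $\Phi$-dependent hypothesis so that it can be used as a general comparison result between controllability notions (it is also where $(ii)\Rightarrow(iii)$ lives). One minor point: you bound $\|\rho_0-\rho_0^{(\ns)}\|$ by the tail sum $\sum_{k>\ns}p_k$; strictly the operator norm of that positive finite-rank remainder is $\sup_{k>\ns}p_k$, but dominating it by the tail sum is of course still valid and is what is needed.
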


\begin{theorem}\label{PRO_L1_estimates}
Let $(A,B,U,\Phi)$ satisfy $(\mathfrak{A})$ and admit  a non-resonant chain of connectedness.
Then there exists a basis $\hat\Phi=(\hat\phi_{k})_{k\in\N}$ of eigenvectors of $A$ and a 
subset $S$ of $\N^2$
 such that,
for every $m \in \mathbf{N}$,
$S$
is a $m$-connectedness chain for $(A,B,U,\hat \Phi)$.
Moreover, let $\delta > 0$ and $U=[0,\delta]$, then for every  $\eps>0$ and for every permutation $\sigma:\{1,\ldots,m\} \to \{1,\ldots,m\}$  there exist a piecewise constant control $u:[0,T_u]\rightarrow [0,\delta]$ and $\theta_1,\ldots ,\theta_m$ in $\R$ for which the propagator $\pro^u$ of~\eqref{eq:main} satisfies
$\|\pro^u_{T_u}\hat \phi_l-e^{i \theta_l} \hat \phi_{\sigma(l)}\|<\eps$ for every $1\leq l\leq m$ and
$$
\|u\|_{L^1}\leq \frac{5~\pi~ (2^{m-1}-1)}{4 \inf \{|\la \hat\phi_{k} , B \hat\phi_{j}\ra | \,:\,  (j,k)\in S,\ 1\leq j,k\leq m\}}.
$$
\end{theorem}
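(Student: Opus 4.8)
The plan is to derive Theorem~\ref{PRO_L1_estimates} from the finite-dimensional controllability theory already available for the Galerkin approximations, by reducing matters to a single well-chosen $m$-dimensional block and then counting the cost of the elementary transfers. First I would address the claim about the existence of a suitable basis $\hat\Phi$ and set $S$: starting from the given non-resonant connectedness chain, for each eigenvalue of $A$ of multiplicity $>1$ only finitely many basis vectors are involved, and assumption $(\mathfrak{A}4)$ guarantees degenerate levels are not directly coupled, so one may apply a block-diagonal unitary change of basis within each eigenspace to arrange that the (countable) chain restricts, for every truncation level $m$, to a connectedness chain inside $\{1,\dots,m\}^2$. Concretely, one can reorder and rotate the basis so that the edges of $S$ form a spanning tree on $\{1,\dots,m\}$ for each $m$ simultaneously; this is a purely combinatorial/linear-algebra step and should be routine, though some care is needed to make the choice coherent across all $m$ at once (a nested family of spanning trees).

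Next I would fix $m$, $\sigma$, and $\eps>0$, and work in the Galerkin approximation of order $N\gg m$. The key tool is the time-dependent change of variables (the one from \cite{agrachev-chambrion,Schrod}) that turns \eqref{eq:main} into a driftless system nonlinear in the control, together with the convexification analysis developed in Sections~\ref{sec:conv}--\ref{sec:phase}: along a single edge $(j,k)\in S$ with all other $\Xi$-couplings non-resonant to $|\lambda_j-\lambda_k|$, one can perform an (approximate) rotation in the $\mathrm{span}\{\phi_j,\phi_k\}$ plane — effectively a two-level $\pi$-pulse — at $L^1$-cost of the control asymptotically $\tfrac{\pi}{|\langle\phi_k,B\phi_j\rangle|}$ up to the factor $5/4$ already appearing in Proposition~\ref{prop:L1estimatessimple}, while the leakage onto the other components stays below any prescribed tolerance. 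The permutation $\sigma$ of $\{1,\dots,m\}$ is then realized as a product of transpositions along edges of the spanning tree $S\cap\{1,\dots,m\}^2$; since a spanning tree on $m$ vertices has $m-1$ edges and any permutation decomposes into at most a bounded number of adjacent transpositions along the tree, one bounds the total number of elementary two-level transfers by $2^{m-1}-1$. Summing the per-edge $L^1$-costs and using that each appears with denominator at least the infimum $\inf\{|\langle\hat\phi_k,B\hat\phi_j\rangle|:(j,k)\in S,\ 1\le j,k\le m\}$ yields the stated bound $\frac{5\pi(2^{m-1}-1)}{4\inf(\dots)}$; the phases $\theta_l$ are exactly the accumulated phases from the interaction-picture transformation and the residual free evolution, handled as in Section~\ref{sec:phase}.

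I expect the main obstacle to be the bookkeeping of the leakage and the phases simultaneously with the $L^1$ budget. Each elementary transfer is only approximate, and concatenating up to $2^{m-1}-1$ of them risks an accumulation of errors that could either blow up the control cost (if one insists on too much precision per step) or destroy the $\eps$-accuracy of the final state; one has to distribute the tolerance $\eps$ across the steps in a way that keeps each individual $L^1$-cost arbitrarily close to the asymptotic value $\tfrac{\pi}{|\langle\hat\phi_k,B\hat\phi_j\rangle|}$, which is possible precisely because the convexification argument gives the cost as an infimum approached in the limit of fast, small-amplitude controls. A secondary subtlety is that the transfers along different edges do not commute and a transposition may disturb components already placed; this is why one works with a tree and orders the transpositions from the leaves inward (or uses a standard bubble-sort-type decomposition adapted to the tree), ensuring that the combinatorial factor is genuinely $2^{m-1}-1$ and not worse. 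Finally, passing from the order-$N$ Galerkin estimate to the statement for the full infinite-dimensional system is exactly the limiting procedure already invoked for Theorem~\ref{THE_Control_wave_function}, so no new analytic input is required there. \EOP
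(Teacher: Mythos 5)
Your overall strategy matches the paper's: construct a well-ordered basis so that the chain restricts to an $m$-connectedness chain for every $m$, realize the permutation as a product of two-level transpositions along edges of the chain inside $(\Sigma_m)$, estimate the per-edge cost as $\frac{\pi}{2\nu|b_{jk}|}<\frac{5\pi}{4|b_{jk}|}$ via Proposition~\ref{prop:L1estimatessimple} and $\nu>2/5$, and push back through the Galerkin/tracking machinery. However, there is a genuine gap in the combinatorial heart of the argument: you assert, but do not prove, that at most $2^{m-1}-1$ transpositions along edges of $S\cap\{1,\ldots,m\}^2$ suffice to realize an arbitrary permutation. Your suggested justification (``a standard bubble-sort-type decomposition adapted to the tree'') points toward a \emph{polynomial} count and does not produce the exponential number $2^{m-1}-1$ appearing in the statement. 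The paper's actual argument is a simple recursion: letting $h(n)$ be the minimal number of $S$-transpositions generating all permutations of $\{1,\ldots,n\}$, one shows that if $\sigma(n+1)\neq n+1$ one may pick $k\leq n$ with $(k,n+1)\in S$ (guaranteed by the ordering from Proposition~\ref{PRO_Galerkin_connected}) and note that $(k\,n{+}1)(k\,\sigma(n+1))\sigma$ fixes $n+1$, so $h(n+1)\leq 2h(n)+1$, and $h(2)=1$ gives $h(m)\leq 2^{m-1}-1$. Without some such inductive bound, the specific constant in the theorem is not established.

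A secondary, smaller point: your discussion of ``block-diagonal unitary rotations within each eigenspace'' is unnecessary. Assumption $(\mathfrak{A}4)$ already decouples degenerate levels, and Proposition~\ref{PRO_Galerkin_connected} produces $\hat\Phi$ by a pure \emph{reordering} $\sigma:\N\to\N$ (taking $\sigma(n+1)$ to be the least index coupled to $\{\sigma(1),\ldots,\sigma(n)\}$); no rotation inside eigenspaces is needed, and the reordering automatically gives the nested family of $m$-connectedness chains you were worried about.
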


The proof of the first part of the statement of Theorem~\ref{PRO_L1_estimates} is given in Section~\ref{sec:nconn}. The second part is proved in Section~\ref{sec:L1}.

Notice that a lower bound on the $L^{1}$ norm of the control was already proved in~\cite{Schrod} (see Proposition~\ref{PRO_L1_lower_estimates}).

\section{Finite dimensional case}
\label{sec:finitedimensional}

Denote by $\mathfrak{u}(n)$ and $\mathfrak{su}(n)$ the Lie algebras of the group of unitary matrices $U(n)$ and its special subgroup $SU(n)=\{M  \in U(n)|\det M=1\}$ respectively.

Here we address the case where $\CH$ is of finite dimension $n$.  Equation~\eqref{eq:main} then defines a bilinear control system on $U(n)$.
Finite dimensional systems of the type~\eqref{eq:main} have been extensively  studied.  A necessary and sufficient condition 
for controllability on $SU(n)$ (i.e. the property that every two points of $SU(n)$ can be joined by a trajectory in $U(n)$ of system~\eqref{eq:main}) is that the Lie algebra generated by $A$ and $B$ contains $\mathfrak{su}(n)$. 
This criterion is optimal, yet  sometimes too complicated to be checked for $n$ large.   
Easily verifiable sufficient conditions for controllability on $SU(n)$ have been thoroughly studied in the literature (see for instance~\cite{dalessandro-book} and references therein).
Next proposition gives a new sufficient condition,  slightly  improving those in~\cite{turinici} and~\cite[Proposition~4.1]{Schrod}. Its proof is based on the techniques that we extend to the infinite dimensional case in the following sections. 

The controllability result is obtained under a slightly weaker assumption than $(\ass)$.

\begin{prop}\label{PRO_contr_dim_finie}
{Let $\CH=\C^n$.}
Let $(A,B,U,\Phi)$ satisfy $(\mathfrak{A}1-2-3)$ and admit a non-resonant
connectedness chain $S$. 
Assume, moreover, that  $\lb_{j} \neq \lb_{k}$ for every $(j,k) \in S$.
Then the control system~\eqref{eq:main} is controllable  both on the unit sphere of $\mathbf{C}^n$
and on $SU(n)$, provided that $U$ contains at least two points. 
If, moreover, $\tr A \neq 0$ or $\tr B \neq
0$, then the control system \eqref{eq:main} is controllable on $U(n)$.
\end{prop}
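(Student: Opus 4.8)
The plan is to establish controllability on $SU(n)$ by exhibiting enough elements of the Lie algebra $\mathfrak{g}$ generated by $A$ and $B$ to force $\mathfrak{g} \supseteq \mathfrak{su}(n)$, using the non-resonant connectedness chain together with an iterated-bracket argument in the spirit of the time-dependent change of variables alluded to in the introduction. First I would pass to the interaction frame: writing $B(t) = e^{-tA} B e^{tA}$, the matrix entries of $B(t)$ in the eigenbasis $\Phi$ oscillate with frequencies $\lambda_{j}-\lambda_{k}$, i.e. $\langle \phi_j, B(t)\phi_k\rangle = e^{i(\lambda_k-\lambda_j)t}\langle\phi_j, B\phi_k\rangle$. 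The non-resonance hypothesis guarantees that, for each $(j_1,j_2)\in S$, the frequency $|\lambda_{j_1}-\lambda_{j_2}|$ is not shared by any other coupled pair; hence by taking suitable time averages (formally, convex combinations of $e^{tA}B e^{-tA}$ over translates, or averaging against $\cos((\lambda_{j_1}-\lambda_{j_2})t)$ and $\sin$) one isolates the rank-two "elementary" skew-Hermitian matrix $e_{j_1 j_2} = \langle\phi_{j_1},B\phi_{j_2}\rangle(E_{j_1 j_2} - \overline{(\cdot)} E_{j_2 j_1})$, and its $i$-twin, as elements obtained from the closure of the attainable directions. This is exactly the convexification step; I expect to invoke (for the finite-dimensional case directly) the fact that the Lie algebra contains $A$ and the projected/averaged directions, so that all the rank-two blocks $e_{jk}$ and $\tilde e_{jk} = i(\cdots)$ for $(j,k)\in S$ lie in $\mathfrak{g}$.

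Next, from the elementary block corresponding to a single edge $(j,k)\in S$ one recovers, by bracketing the two twins, the diagonal element $E_{jj}-E_{kk}$ (up to scalar): $[\,e_{jk},\tilde e_{jk}\,]$ is proportional to $\mathrm{diag}$-type. Then one walks along the connectedness chain: given blocks for edges $(j,k)$ and $(k,l)$, the bracket $[e_{jk}, e_{kl}]$ produces the block in the $(j,l)$ slot, so connectedness of the graph propagates the $e_{jk}$'s to all pairs $(j,l)$, $1\le j,l\le n$. Together with the diagonal differences $E_{jj}-E_{kk}$ this generates all of $\mathfrak{su}(n)$ by the standard root-space argument (the $E_{jk}-E_{kj}$, $i(E_{jk}+E_{kj})$, and the traceless diagonal matrices span $\mathfrak{su}(n)$). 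Controllability on the unit sphere of $\C^n$ follows since $SU(n)$ acts transitively on it; the hypothesis that $U$ contains at least two points is what allows the bracket-generating argument to be realized by admissible piecewise-constant controls (one uses $A+u_1 B$ and $A+u_2 B$, subtracts to extract $B$-directions). Finally, if $\tr A \neq 0$ or $\tr B\neq 0$ then $\mathfrak{g}$ contains an element outside $\mathfrak{su}(n)$, so $\mathfrak{g}=\mathfrak{u}(n)$ and controllability on $U(n)$ follows.

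The main obstacle is the convexification/averaging step: making rigorous that the oscillatory entries of $B(t)$ can be "tuned" to isolate a single elementary block with the remaining coupled entries not contributing — this is precisely where non-resonance of the connectedness chain (rather than full $\Q$-linear independence of all gaps) is used, and it is the technical novelty the authors flag. One must check that resonances \emph{within} the chain ($|\lambda_{s_1}-\lambda_{s_2}| = |\lambda_{t_1}-\lambda_{t_2}|$ for two edges) do not occur by the non-resonance definition, and that any pair $(t_1,t_2)$ sharing a frequency with an edge but having $\langle\phi_{t_2},B\phi_{t_1}\rangle = 0$ simply does not appear. A secondary technical point is that we only assume $(\mathfrak{A}1\text{-}2\text{-}3)$, not $(\mathfrak{A}4)$, but the extra hypothesis $\lambda_j\neq\lambda_k$ for $(j,k)\in S$ is exactly what is needed so that each edge carries a nonzero frequency and the averaging genuinely extracts that edge's block; degenerate eigenvalues not joined by an edge of $S$ cause no trouble.
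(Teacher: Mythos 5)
Your proposal is correct in its conclusions and in the overall skeleton (isolate the elementary skew-Hermitian blocks attached to edges of $S$, bracket twins to get diagonal differences, propagate through the connected graph, then conclude by the Lie algebra rank criterion and transitivity of $SU(n)$ on the sphere), but it takes a genuinely different route from the paper's own finite-dimensional proof.

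The paper does \emph{not} use the interaction-frame averaging/convexification machinery in Section~3; that machinery (Lemma~\ref{LEM_convexification}) is reserved for the infinite-dimensional argument in Section~4. Instead, the finite-dimensional proof is purely algebraic: it computes
$\mathrm{ad}^p_A(B)=\sum_{j,k}(a_{jj}-a_{kk})^p b_{jk}e^{(n)}_{jk}$
and, for a fixed edge $(j,k)\in S$, uses a Lagrange-interpolation polynomial $P_{jk}=\sum_h c_h X^h$ with $P_{jk}((a_{jj}-a_{kk})^2)=1$ and $P_{jk}((a_{ll}-a_{mm})^2)=0$ for every other \emph{coupled} pair (where non-resonance plus $b_{lm}=0$ for uncoupled resonant pairs is exactly what makes this possible). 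Then $\sum_h c_h\,\mathrm{ad}^{2h}_A(B)=b_{jk}e^{(n)}_{jk}+b_{kj}e^{(n)}_{kj}$ and $\sum_h c_h\,\mathrm{ad}^{2h+1}_A(B)=i(\lambda_j-\lambda_k)(b_{jk}e^{(n)}_{jk}+\overline{b_{jk}}e^{(n)}_{kj})$ are \emph{manifestly} in $\mathrm{Lie}(A,B)$ as finite linear combinations of iterated brackets, and the hypothesis $\lambda_j\neq\lambda_k$ is used precisely to separate the two. Your averaging route computes the same spectral projection of $\mathrm{ad}_A$ onto its $\pm(\lambda_j-\lambda_k)$-eigenspaces, but replaces a one-line algebraic identity by a limit, and the step you flag as something you ``expect to invoke''—namely that the averaged/limit directions land in $\mathrm{Lie}(A,B)$—is exactly where the polynomial argument is cleaner: the limits live in $\mathrm{span}\{\mathrm{ad}^p_A(B):p\geq0\}$, a closed finite-dimensional subspace of $\mathrm{Lie}(A,B)$, but you would need to say this explicitly. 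In short: same target (show $e^{(n)}_{jk}-e^{(n)}_{kj}$ and $i(e^{(n)}_{jk}+e^{(n)}_{kj})\in\mathrm{Lie}(A,B)$ for $(j,k)\in S$, then bracket-walk), different mechanism for the frequency isolation; the paper's is more elementary and self-contained for the finite-dimensional case, while yours mirrors the infinite-dimensional convexification and would give the result as well, at the price of an extra closure/spanning justification.
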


\begin{proof}
For  every $1\leq j,k \leq n$, let $e^{(n)}_{jk}$ be the $n\times n$ matrix whose
entries are all zero, but the one at line $j$ and column $k$ which is equal to $1$.
We denote by $a_{jk}$ and $b_{jk}$ the $(j,k)$-th entry of $A$ and $B$, respectively.

Recall that, for any two
$n\times n$ matrices $X$ and $Y$, $\mbox{ad}_X(Y)=[X,Y]=XY-YX$, and compute the
iterated matrix commutator
$$
\mbox{ad}^p_{A}(B)=\sum_{j,k=1}^n (a_{jj}-a_{kk})^p b_{jk}
e^{(n)}_{jk}.
$$

Fix $(j,k)$ in $S$. 
By hypothesis, for every $l,m$ in
$\{1,\ldots, n\}$ such that $\{l,m\}\neq \{j,k\}$, $(a_{jj}-a_{kk})^2\neq
(a_{ll}-a_{mm})^2$ or $b_{lm}=0$.  There exists some polynomial $P_{jk}$ with
real coefficients such that $P_{jk}((a_{jj}-a_{kk})^2)=1$ and
$P_{jk}((a_{ll}-a_{mm})^2)=0$ if $(a_{jj}-a_{kk})^2\neq (a_{ll}-a_{mm})^2$.
Let
$P_{jk}=\sum_{h=0}^d c_h X^h$.
Then
$$
\sum_{h=0}^d c_h\mbox{ad}^{2h}_{A}(B)=
b_{jk} e^{(n)}_{jk} +b_{kj} e^{(n)}_{kj}.
$$
As a consequence,
$$
\sum_{h=0}^d c_h\mbox{ad}^{2h+1}_{A}(B)=(a_{jj}-a_{kk})\left(b_{jk} e^{(n)}_{jk} + \overline{b_{jk}} e^{(n)}_{kj}\right) = i(\lb_{j} - \lb_{k})\left(b_{jk} e^{(n)}_{jk} + \overline{b_{jk}} e^{(n)}_{kj}\right),
$$
and then the two elementary Hermitian matrices $e^{(n)}_{jk}-e^{(n)}_{kj}$ and
$ie^{(n)}_{jk}+ie^{(n)}_{kj}$
also belong to $\mbox{Lie}(A,B)$.
Because of the connectedness of $B$ and thanks to the relation 
\begin{align*}
\left[e^{(n)}_{jk},e^{(n)}_{lm}\right] &= \delta_{kl} e^{(n)}_{jm}   -  \delta_{jm}e^{(n)}_{lk},
\end{align*}
one deduces that
 $\mathfrak{su}(n) \subset \mbox{Lie}(A,B)$.

If $\tr A=\tr B=0$, then $A$ and $B$
belong to $\mathfrak{su}(n)$, hence $\mathfrak{su}(n) =
\mbox{Lie}(A,B)$.
If $\tr A\neq 0$ or $\tr B\neq 0$, then $A$ or
$B$
does not belong to $\mathfrak{su}(n)$ and $\mathfrak{u}(n) =
\mbox{Lie}(A,B)$.
This completes the proof of the controllability of the control system
(\ref{eq:main})
on $SU(n)$ and $U(n)$.

It remains to prove the controllability on the unit sphere $\mathbf{S}^{n}$ of
$\mathbf{C}^n$.
Fix $x_0, x_{1}$ in $\mathbf{S}^{n}$, and consider
an element  of  $g_{1} \in SU(n)$ such that $g_{1} x_{0} = x_{1}$.
According to what precedes there exists a trajectory $g$ in $U(n)$ of~\eqref{eq:main} from $I_{n}$ to $g_{1}$.
The curve $t \mapsto g(t) x_{0}$ is a trajectory of~\eqref{eq:main} in $\mathbf{S}^{n}$ that links $x_{0}$ to $x_{1}$.
\end{proof}


\section{Convexification procedure}
\label{sec:conv}

Sections~\ref{sec:conv}, \ref{sec:modulus}, and \ref{sec:phase} are devoted to the proof of Theorem~\ref{THE_Control_wave_function} in the case in which $\CH$ has infinite dimension.

\subsection{Time-reparametrization}\label{SEC_time_reparametrization}

We denote by $PC$ the set of
piecewise constant functions $u: [0,\infty) \to [0,\infty)$ such that there exist  $u_{1},\ldots, u_{p} > 0$ and 
$0 = t_{1} < \cdots < t_{p+1} = T_{u}$ for which
$$
u:t\mapsto \sum_{j=1}^p u_j \chi_{[t_j,t_{j+1})}(t).
$$
Let us identify $u = \sum_{j=1}^p u_j \chi_{[t_j,t_{j+1})}$ with the finite sequence $(u_j,\tau_j)_{1\leq j \leq p}$
where $\tau_j= t_{j+1}-t_j$ for every $1\leq j \leq p$.

  We define the map
$$\begin{array}{llcl}
   {\cal P} :& PC & \rightarrow & PC \\
	& (u_j,\tau_j)_{1\leq j \leq p} & \mapsto & \left ( \frac{1}{u_j}, u_j
\tau_j \right ),
  \end{array}$$
which satisfies the following easily verifiable properties.
\begin{prop}\label{prop:4321}
For every $u\in PC$,
${\cal P}\circ {\cal P} (u)= u$
and
$\|{\cal P}(u)\|_{L^1}=\sum_{i=1}^p \tau_j.$
\end{prop}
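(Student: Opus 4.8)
The plan is that this statement is a direct computation carried out on the sequence representation $u=(u_j,\tau_j)_{1\le j\le p}$ of an element of $PC$, the only preliminary point being to check that ${\cal P}$ really takes values in $PC$.

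First I would note that ${\cal P}$ is well defined as a map $PC\to PC$: by definition of $PC$ one has $u_j>0$ for every $j$ and $\tau_j=t_{j+1}-t_j>0$, hence $1/u_j>0$ and $u_j\tau_j>0$; thus the finite sequence $(1/u_j,\,u_j\tau_j)_{1\le j\le p}$ is again the sequence representation of a function in $PC$, whose breakpoints are the partial sums $\tilde t_1=0$, $\tilde t_{j+1}=\tilde t_j+u_j\tau_j$. I would also remark in passing that ${\cal P}$ does not depend on the chosen representation of $u$: replacing a block of consecutive subintervals carrying the same value by a single subinterval of the same total duration only adds durations, which is compatible with the rule $\tau_j\mapsto u_j\tau_j$, so the two images represent the same function.

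For the involution identity I would simply apply the rule twice at the level of sequences: the $j$-th value of ${\cal P}\circ{\cal P}(u)$ is $1/(1/u_j)=u_j$ and its $j$-th duration is $(1/u_j)(u_j\tau_j)=\tau_j$, whence ${\cal P}\circ{\cal P}(u)=(u_j,\tau_j)_{1\le j\le p}=u$. For the norm identity I would view ${\cal P}(u)$ as the nonnegative function $\sum_{j=1}^p (1/u_j)\,\chi_{[\tilde t_j,\tilde t_{j+1})}$ with $\tilde t_{j+1}-\tilde t_j=u_j\tau_j$, so that
\[
\|{\cal P}(u)\|_{L^1}=\sum_{j=1}^p \frac{1}{u_j}\,(\tilde t_{j+1}-\tilde t_j)=\sum_{j=1}^p \frac{1}{u_j}\,u_j\tau_j=\sum_{j=1}^p \tau_j .
\]
There is no genuine obstacle here: everything reduces to the elementary observation that multiplying the value of each piece by $1/u_j$ while multiplying its duration by $u_j$ leaves the integral of that piece unchanged and is self-inverse.
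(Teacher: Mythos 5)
Your computation is correct and is exactly the "easily verifiable" argument the paper has in mind (the paper in fact offers no written proof, merely asserting the properties follow from the definition of ${\cal P}$ on sequence representations). The extra checks that ${\cal P}$ maps $PC$ into $PC$ and is independent of the chosen representation are harmless and sensible additions.
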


Assume that $(A,B,U, \Phi)$ satisfies $(\ass)$.
In analogy with Definition~\ref{DEF_Sol_skew_adjoint_bilinear_system}, we define,
for every $u= \sum_{j=1}^p u_j \chi_{[t_j,t_{j+1})} \in PC$ such that $u(t) \in U$ for every $t\geq 0$, the solution of
\begin{equation}\label{EQ_main_reparam}
\frac{d\psi}{dt}(t)=(u(t)A+B)\psi(t),
\end{equation}
 with initial condition $\psi_{0} \in \CH$ as
$$
\psi(t)= e^{(t-t_{l})(u_l A +B)} \circ \cdots \circ e^{t_1(u_1 A +B)}(\psi_0)\,,
$$
where $t_{l} \leq t \leq t_{l+1}$.

System~\eqref{EQ_main_reparam} is the time reparametrization of
system~(\ref{eq:main}) induced by the transformation $\cal P$, as stated in the following proposition.

\begin{prop}\label{PRO_reparam}
 Let $u=(u_j,\tau_j)_{1\leq j \leq p}$ belong to $PC$ and $\psi_0$ be a point of $\cal H$.
Let $\psi$ be the solution of (\ref{eq:main}) with control $u$ and initial
condition $\psi_0$, and
$\widetilde{\psi}$ be the solution of (\ref{EQ_main_reparam}) with control ${\cal P}(u)$ and
initial condition $\psi_0$.
Then 
$
\psi\left (T_{u} \right )= \widetilde{\psi}\left (\|u\|_{L^1}
\right )
$.
\end{prop}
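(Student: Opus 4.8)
The plan is to unwind both sides of the claimed identity into explicit finite products of exponentials of the generators $A+cB$, and then to observe that these products agree factor by factor. Write $u=(u_j,\tau_j)_{1\le j\le p}$, so that $u=\sum_{j=1}^p u_j\chi_{[t_j,t_{j+1})}$ with $t_1=0$, $t_{j+1}=t_j+\tau_j$ and $T_u=t_{p+1}$. By Definition~\ref{DEF_Sol_skew_adjoint_bilinear_system}, the solution of~\eqref{eq:main} with control $u$ and initial condition $\psi_0$ is
$$
\psi(T_u)=e^{\tau_p(A+u_p B)}\circ\cdots\circ e^{\tau_1(A+u_1 B)}(\psi_0).
$$

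On the other side, ${\cal P}(u)=\bigl(1/u_j,\;u_j\tau_j\bigr)_{1\le j\le p}$ is the piecewise constant function equal to $1/u_j$ on its $j$-th subinterval, whose length I write $\sigma_j:=u_j\tau_j$; the breakpoints of ${\cal P}(u)$ are thus $\tilde t_j=\sum_{i<j}\sigma_i$, and in particular its total duration is $\tilde t_{p+1}=\sum_{j=1}^p\sigma_j=\sum_{j=1}^p u_j\tau_j=\|u\|_{L^1}$. Applying the definition of the solution of~\eqref{EQ_main_reparam} with control ${\cal P}(u)$, evaluated exactly at the final breakpoint $\tilde t_{p+1}=\|u\|_{L^1}$, gives
$$
\widetilde\psi\bigl(\|u\|_{L^1}\bigr)=e^{\sigma_p((1/u_p)A+B)}\circ\cdots\circ e^{\sigma_1((1/u_1)A+B)}(\psi_0).
$$
The heart of the matter is then the elementary identity, valid for each $j$,
$$
\sigma_j\Bigl(\tfrac1{u_j}A+B\Bigr)=u_j\tau_j\Bigl(\tfrac1{u_j}A+B\Bigr)=\tau_j\,(A+u_j B),
$$
so that $e^{\sigma_j((1/u_j)A+B)}=e^{\tau_j(A+u_j B)}$ for every $j$ (the exponential of $(1/u_j)A+B$ is well defined because this operator is a nonzero scalar multiple of $A+u_j B$, which is essentially skew-adjoint by $(\mathfrak A3)$, and $e^{\sigma_j\cdot(1/u_j)X}=e^{\tau_j X}$ by the group property). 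Composing these $p$ equalities shows that the two products coincide, hence $\psi(T_u)=\widetilde\psi(\|u\|_{L^1})$, which is the assertion.

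There is no analytic obstacle here: the whole argument reduces to finitely many exponentials of fixed skew-adjoint generators, so no convergence or domain issue arises beyond those already handled in Definition~\ref{DEF_Sol_skew_adjoint_bilinear_system} and in the definition of the solution of~\eqref{EQ_main_reparam}. The only points that should be spelled out carefully are the bookkeeping ones: that the partition $\{t_j\}$ attached to $u$ is turned by ${\cal P}$ into the partition $\{\tilde t_j\}$ attached to ${\cal P}(u)$; that evaluating $\widetilde\psi$ precisely at $\tilde t_{p+1}$ (rather than strictly inside the last subinterval) produces the full $p$-fold product written above, so that it lines up term by term with the formula for $\psi(T_u)$; and that the strict positivity $u_j>0$ built into the definition of $PC$ is exactly what makes ${\cal P}(u)$ well defined and guarantees $\sigma_j>0$, so that $\{\tilde t_j\}$ is an admissible partition. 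As a consistency check one can also note, via Proposition~\ref{prop:4321}, that $\|{\cal P}(u)\|_{L^1}=\sum_{j}\tau_j=T_u$, i.e. ${\cal P}$ interchanges the roles of ``duration'' and ``$L^1$ norm''.
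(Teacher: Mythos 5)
Your proof is correct and uses exactly the key observation the paper relies on, namely that for $u_j>0$ one has $e^{\tau_j(A+u_jB)}=e^{u_j\tau_j\bigl(\frac{1}{u_j}A+B\bigr)}$, applied factor by factor to the finite product defining the propagator. The paper states this identity in a single line and leaves the bookkeeping implicit; you have simply written it out in full, which is sound.
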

\begin{proof}
 It is enough to remark that, if $u\neq 0$, for every $t\in [0,\infty)$,
$e^{t(A+uB)}=e^{tu \left ( \frac{1}{u} A +B \right )}.$
\end{proof}
As a consequence of Proposition~\ref{PRO_reparam} it is equivalent to prove controllability for~\eqref{eq:main} with $U = (0,\delta]$ or
to prove  controllability for system~(\ref{EQ_main_reparam}) with control  $u \in [1/\delta,\infty)$.

\subsection{Convexification}

For every positive integer $N$  let the matrices  
$$
A^{(N)} = \mathrm{diag}(i \lb_{1},\ldots, i\lb_{N})\quad \mbox{ and }\quad
B^{(N)} = (\langle \phi_{j}, B\phi_{k} \rangle)_{j,k =1}^{N} =: (b_{jk})_{j,k =1}^{N} \,,
$$
be the Galerkin approximations at order $N$ of $A$ and $B$, respectively.
Let $t\mapsto \psi(t)$ be a solution of
$$
\dot{\psi} = (u A^{(N)} + B^{(N)}) \psi\,,
$$
corresponding to a control function $u$ and consider $v(t) = \int_{0}^{t} u(\tau) d\tau$. Denote by  $\diag(B)$ the diagonal of $B^{(N)}$ and let $\hat B^{(N)} = B^{(N)} - \diag(B)$. 
Then $q: t\mapsto e^{-v(t)A^{(N)} - t \diag(B)}\psi(t)$,
is a solution of
$$
\dot q(t)=e^{-v(t)A^{(N)}- t \diag(B)} \hat B^{(N)} e^{v(t)A^{(N)} + t \diag(B)}q(t). \eqno{(\Theta_N)}
$$
Let us set 
\begin{equation}\label{eq:vartheta}
\vartheta_{N}(t,v) = e^{-vA^{(N)}- t \diag(B)} \hat B^{(N)} e^{vA^{(N)} + t \diag(B)}.
\end{equation}

\begin{lem}\label{LEM_convexification}
Let $\K$ be a positive integer and $\g_{1}, \ldots, \g_{\K} \in \R\setminus \{0\}$ be  such that
$|\g_{1}|\neq|\g_{j}|$ for $j =2,\ldots, \K.$
Let 
$$ 
\f(t) = (e^{it\g_{1}}, \ldots, e^{it\g_{\K}}).
$$
Then, for every $t_0\in \R$, we have
$$ 
\overline{\conv{\f([t_0,\infty))}} \supseteq \nu \mathbb{S}^{1} \times \{(0, \ldots, 0)\}\,,
$$
where
$
\nu=\prod_{k=2}^{\infty} \cos \left (\frac{\pi}{2 k} \right ) >0.
$
Moreover,
for every $R>0$ and $\xi \in \mathbb{S}^{1}$ there exists a sequence $(t_{k})_{k\in \N}$ such that
$
t_{k+1} - t_{k} > R
$
and 
$$
\lim_{h\to \infty } \frac{1}{h} \sum_{k=1}^{h} \varphi (t_{k}) = (\nu \xi,0,\ldots,0)\,.
$$
\end{lem}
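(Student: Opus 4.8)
The plan is to prove the containment of the circle in the closure of the convex hull first, and then extract the equidistributed sequence as a quantitative refinement. Write $\f(t) = (e^{it\g_1},\dots,e^{it\g_\K})$ and note that the last $\K-1$ coordinates are bounded oscillations with nonzero frequencies, so their Cesàro averages along any arithmetic-progression-like sequence tend to $0$ by orthogonality; the content is therefore entirely in producing the factor $\nu$ in the first coordinate while simultaneously killing the others.

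First I would set up a dyadic averaging scheme. The key idea is that averaging $e^{it\g_1}$ over two points $t$ and $t+s$ gives $e^{it\g_1}\cos(s\g_1/2)$ times a unimodular phase, i.e. it contracts the first coordinate by $\cos(s\g_1/2)$ while not changing its argument (up to a controllable rotation). By choosing a sequence of shifts $s_1,s_2,\dots$ and forming the corresponding $2^n$-fold average (a Riesz-product-type construction), the first coordinate picks up the product $\prod_{k}\cos(s_k\g_1/2)$. To make this product equal $\nu = \prod_{k\ge 2}\cos(\pi/(2k))$ and, crucially, to also annihilate coordinates $2,\dots,\K$ in the limit, I would choose the shifts so that $s_k\g_1/2$ hits the target angles $\pi/(2k)$ while $s_k\g_j/2$ for $j\ge 2$ is arranged so that the partial products $\prod_k\cos(s_k\g_j/2)$ do not conspire to stay bounded away from $0$. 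The hypothesis $|\g_1|\ne|\g_j|$ is exactly what is needed here: it guarantees the shifts can be chosen (with room to spare, using the freedom to add large multiples of $2\pi/\g_1$, which does not disturb the first coordinate's contraction behaviour) so that for each $j\ge 2$ infinitely many factors $\cos(s_k\g_j/2)$ are bounded away from $\pm 1$, forcing the product to $0$. The constraint $t_{k+1}-t_k>R$ and $t_0$ arbitrary is handled by this same freedom to translate by large multiples of the period of the first coordinate.

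The main obstacle I anticipate is the simultaneous control: one must kill coordinates $2,\dots,\K$ \emph{while} driving coordinate $1$ to the precise radius $\nu$ and to an arbitrary prescribed argument $\xi\in\mathbb{S}^1$, and do so with \emph{equidistributed} sample points (the Cesàro statement) rather than merely in the convex hull. I would handle the argument $\xi$ by prepending a single rotation (choosing $t_1$ appropriately modulo $2\pi/\g_1$), which is free. For the equidistribution, rather than a pure convex combination I would build an explicit infinite nested sequence of shifts and let the sample set be the orbit $\{t_1 + \sum_{k\in F} s_k : F \text{ finite}\}$ enumerated so that consecutive gaps exceed $R$; the Cesàro limit of $\f$ over the first $2^n$ such points then equals the $2^n$-fold average, which converges to $(\nu\xi,0,\dots,0)$ as $n\to\infty$, and a standard diagonal/telescoping argument upgrades this to a genuine limit over a single increasing sequence $(t_k)$. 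The quantitative lower bound $\nu = \prod_{k\ge2}\cos(\pi/(2k))>0$ is immediate since $\sum_k \log\cos(\pi/(2k))$ converges absolutely (the terms are $O(1/k^2)$), so no delicate estimate is needed there; the only genuinely careful bookkeeping is verifying that the chosen shifts keep all the error coordinates summably small at each finite stage so that the limit is exactly zero in those components.
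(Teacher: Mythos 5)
Your proposal correctly identifies the core mechanism — a Riesz-product (dyadic) averaging scheme, rotation invariance for disposing of $t_0$ and $\xi$, and absolute summability of $\log\cos(\pi/(2k))$ for $\nu>0$ — and these are indeed the ingredients of the paper's proof. However, the paper's construction is structurally different from yours in one essential way, and your sketch leaves a genuine gap precisely there.

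\textbf{The gap.} You choose the shifts $s_k$ so that $s_k\g_1/2$ hits the target angles $\pi/(2k)$ (you mean $\pi/(2(k+1))$; with $\pi/(2k)$ the $k=1$ factor is $\cos(\pi/2)=0$ and the whole product vanishes), thereby driving the first coordinate to exactly $\nu$. But then you must show that, for \emph{every} $j\ge 2$, the partial products $\prod_k\cos(s_k\g_j/2)$ tend to $0$, and you propose to achieve this by adding large multiples of $2\pi/\g_1$ to the $s_k$. This works automatically only when $\g_j/\g_1$ is an integer $p$ (then the factor at step $k=|p|-1$ is exactly $\cos(\pm\pi/2)=0$), but when $\g_j/\g_1$ is irrational the added multiples of the period are the \emph{only} freedom, and you need to choose them so that \emph{simultaneously} for every such $j$ the factor $|\cos(s_k\g_j/2)|$ stays bounded away from $1$ infinitely often with a uniform bound, without destroying the first coordinate. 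This is a genuine simultaneous Weyl-equidistribution / Diophantine claim that you assert follows from $|\g_1|\neq|\g_j|$ but do not establish; it is the crux of the lemma. The paper sidesteps it entirely: after normalizing $|\g_1|=1$, the dyadic shifts are $\pi/|\g_{k+1}|$ (not $\pi/(2(k+1))$), chosen to annihilate \emph{exactly} the other integer-frequency coordinates, so the first coordinate's modulus comes out as $\prod\cos(\pi/(2|\g_k|))\ge\nu$, while the coordinates with \emph{non-integer} frequency are not touched by the dyadic averaging at all. They are killed by a distinct second stage — replicating the dyadic pattern along a long arithmetic progression $t_j^k=t_j+2\pi m k$ and invoking a geometric-series estimate. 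This two-stage structure (dyadic for rationally-related gaps, arithmetic-progression Cesàro for the others) is the idea your sketch is missing.

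\textbf{Secondary issue.} The ``standard diagonal/telescoping argument'' to upgrade convergence of the $2^n$-fold averages to convergence of $\frac1h\sum_{k\le h}\f(t_k)$ for \emph{every} $h$ is not standard and is not spelled out. In your nested construction the block size grows with $n$, so the partial Cesàro averages between powers of $2$ need a dedicated estimate (they can be controlled, but only because the phase factors $e^{is_n\g_1}\to 1$ by your choice of $s_n$; this must be said). The paper avoids this altogether by fixing $n$ and letting only the number $h$ of translated copies grow, so the usual Cesàro averaging applies directly.

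In short: the high-level plan is right and closely mirrors the paper's, but your choice of shifts creates a harder annihilation problem that you do not solve, and you omit the separate arithmetic-progression averaging that the paper uses for the non-rationally-related frequencies.
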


\begin{proof}
Since 
\begin{equation}\label{eq:rotinv}
\f(t-t_0) = (e^{-it_0\g_{1}}e^{it\g_{1}}, \ldots, e^{-it_0\g_{\K}}e^{it\g_{\K}}),
\end{equation}
it is enough to prove the lemma for $t_0=0$.
We can suppose that $|\g_{1}| = 1$ and,
up to a reordering of the indexes, that there exist $n$ and $\tilde{n}$  such that $1\leq n\leq \tilde{n} \leq \K$, 
$|\g_{i}|\neq|\g_{j}|$ for every $i,j\in\{1,\dots, n\}$,   
$\g_{2},  \ldots, \g_{\tilde{n}} \in \Z$, $\g_{\tilde{n}+1},\ldots, \g_{\K} \in \R\setminus \Z$, and
$\{|\g_{n+1}|,\ldots,|\g_{\tilde{n}}|\} \subset \{|\g_{2}|,\ldots,|\g_{n}|\}$.

Consider the $2^{n-1}$ real numbers defined as follows:
let
$$
\bar{t}_1 = 0,
$$
and for $k \in \{1,\ldots,n-1\}$ and $j\in\{1,\dots,2^{k-1}\}$,
\begin{equation*}
\bar{t}_{2^{k-1}+j} = \bar{t}_{j} +\frac{\pi}{|\g_{k+1}|}\,.
\end{equation*}
Up to a reordering of the $\bar{t}_{j}$, we can suppose that $0=\bar{t}_{1}< \bar{t}_{2}< \cdots< \bar{t}_{2^{n-1}}$. Take  an integer $r$ larger than $R/2\pi$, then set $t_{j} = \bar{t}_{j} + 2\pi r(j-1)$, in such a way that $t_{k} - t_{k-1} > R$ for every $k=2,\ldots,2^{n-1}$.

Now consider the 
arithmetic mean of the $l$-th (complex) coordinates of  
$\f(t_1),\ldots,\f(t_{2^{n-1}})$. We show that this quantity is zero for $l = 2, \ldots, \tilde{n}$. Indeed, from the definition of $t_j$, we have
$$
\sum_{j=1}^{2^{n-1}} e^{i t_j \g_l} =
\prod_{k=1}^{n-1} \left(1+e^{i \pi \g_l / |\g_{k+1}|}\right)\,,
$$
which is zero since so is the $k$-th factor when $|\g_l| = |\g_{k+1}|$.

On the other hand, the arithmetic mean of the first coordinate is uniformly bounded away from zero. Indeed
\begin{align}
\left| \frac{1}{2^{n-1}}\sum_{j=1}^{2^{n-1}} e^{i \g_1t_j}\right|
&= \prod_{k=1}^{n-1} \left|\frac{1+e^{i \pi \g_1/ |\g_{k+1}|}}{2}\right|
= \prod_{k=2}^{n} \cos \left(\frac{\pi}{2 |\g_{k}|}\right)=  \exp\left({\sum_{k=2}^{n} \log\left(\cos\left(\frac{\pi}{2|\g_k|}\right)\right)}\right)\nonumber\\
&
\geq \exp\left({\sum_{k=2}^{\infty} \log\left(\cos\left(\frac{\pi}{2k}\right)\right)}\right)
 =\nu\,.\label{SF}
\end{align}
Since ${\log \left ( \cos \left (\frac{\pi}{2 k} \right )\right ) \sim -\frac{\pi^2}{8k^2}}$
as $k$ tends to infinity, then the sum $\sum_{k \geq 2} \log \left ( \cos
\left (\frac{\pi}{2 k} \right )\right )$ converges to a (negative) finite value $l$.
As a consequence, $\nu=\exp(l)$ is a positive number.

Therefore we have found a sequence of numbers $t_j$ such that the arithmetic mean of the first coordinate of
$\f(t_1),\ldots,\f(t_{2^{n-1}})$ is
uniformly bounded away from zero and the arithmetic means of following $\tilde{n}-1$ coordinates are zero.
According to~\eqref{eq:rotinv}, the role of $t_1,\dots,t_{2^{n-1}}$ can equivalently be played, for every $k\in \N$, by the
$2^{n-1}$-uple
$$
t_{j}^{k} = t_j + 2\pi m k\,, \quad j=1,\ldots,2^{n-1},
$$
where the integer $m$ is larger than $r+ t_{2^{n-1}}/2\pi$.
Now, let $l \in \{\tilde{n}+1,\dots,\K\}$, so that $\g_l \notin \Z$. For every $h \in \N$, the arithmetic mean of the $l$-th coordinate of the points $\f(t_{j}^{k})$ ($k=0,\dots,h$, $j=1,\ldots,2^{n-1}$)
is
\begin{align*}
 \frac{1}{2^{n-1}(h+1)}\sum_{j=1}^{2^{n-1}} \sum_{k=0}^{h} e^{it_j^k\g_l}
&=
\frac{1}{2^{n-1}(h+1)}\sum_{j=1}^{2^{n-1}} e^{it_j\g_l}\sum_{k=0}^{h} e^{i2\pi mk\g_l}\\
&=  \left(\frac{1}{2^{n-1}}\sum_{j=1}^{2^{n-1}} e^{it_j\g_l}\right)
\frac{1}{(h+1)} \frac{1 - e^{i2\pi m (h+1)\g_{l}}}{1-e^{i2\pi m\g_{l}}}
\stackrel{h\to\infty}{\longrightarrow}0\,.
\end{align*}
Therefore, we found a sequence of points in the convex hull of $\f([0,\infty))$
converging to
$({2^{1-n}}\sum_{j=1}^{2^{n-1}} e^{i \g_1t_j},0,\ldots,0)$.
The lemma follows from~\eqref{SF} and by rotation invariance (see~\eqref{eq:rotinv}).
\end{proof}
\begin{rem}\label{REM_valeur_nu}
In order to estimate $\nu$, notice that, for every $x$ in $(-1,1)$,
$
-\frac{{x}^{2}}{2}-\frac{{x}^{4}}{11}
\leq \log(\cos(x)).
$
Hence, taking $x=\frac{\pi}{2k}$ for $k\geq 2$,
$$\sum_{k=2}^{\infty} \log \left ( \cos
\left (\frac{\pi}{2 k} \right )\right )>-\frac{\pi^2}{8}\sum_{k=2}^{\infty} \frac{1}{k^2}-\frac{\pi^4}{176}\sum_{k=2}^{\infty} \frac{1}{k^4} =-\frac{{\pi}^{8}+240\,{\pi}^{4}-1980\,{\pi}^{2}}{15840}$$
 from which one deduces $\nu>\exp \left (-\frac{{\pi}^{8}+240\,{\pi}^{4}-1980\,{\pi}^{2}}{15840} \right ) >\frac{2}{5}$. Numerically, one finds $\nu \approx 0.430$.
\end{rem}

\subsection{An auxiliary system}\label{SEC_auxilliary_system}

Let $(A,B,U,\Phi)$ satisfy  $(\ass)$. 
With every  non-resonant connectedness chain $S$ for $(A,B,U,\Phi)$ and  every $n\in \mathbf{N}$ we associate the subset 
$$
S_n=\{(j,k)\in S\,|\,1\leq j, k \leq n, j\neq k\}
$$
of $S$ and the control system on $\mathbf{C}^{n}$
$$
\dot{x}=\nu |b_{jk}|\left (e^{i \theta} e_{jk}^{(n)} - e^{-i \theta} e_{kj}^{(n)}  \right )x,\eqno{({\Sigma}_n)}
$$
where  $\theta = \theta (t)\in \mathbb{S}^{1}$ and $(j,k) = (j(t),k(t)) \in S_{n}$ are piecewise constant controls. 
Recall that
$e_{jk}^{(n)}$
is the $n\times n$ matrix whose entries are all zero but the one of index $(j,k)$ which is equal to $1$
and that
$
\nu=\prod_{k=2}^{\infty} \cos \left (\frac{\pi}{2 k} \right )
$ 
(see~Lemma~\ref{LEM_convexification}).

The control system  $({\Sigma}_n)$ is linear in $x$. For every
$\theta$ in $\mathbb{S}^{1}$ and  every $1\leq j, k\leq n, j\neq k$, the
matrix $e^{i \theta} e_{jk}^{(n)}- e^{-i \theta}e_{kj}^{(n)}$  is skew-adjoint with zero trace.
Hence the control system  $({\Sigma}_n)$
leaves the unit sphere $\mathbf{S}^{n}$  of $\mathbf{C}^n$ invariant. In order to take advantage
of the rich Lie group structure of group of matrices, it is also possible to
lift this system in the group $SU(n)$, considering $x$ as a matrix.


\subsection{Existence of a $n$-connectedness chain} \label{sec:nconn}

Notice that system $(\Sigma_{n})$ cannot be controllable if $S$ is not a $n$-connectedness chain (see~\cite[Remark~4.2]{Schrod}).
This motivates the following proposition.

\begin{prop}\label{PRO_Galerkin_connected}
Let $(A,B,U,\Phi)$ satisfy $(\mathfrak{A})$. If there exists
a connectedness chain  for $(A,B,U,\Phi)$,
then there exists a bijection $\sigma :\mathbf{N}\rightarrow \mathbf{N}$
such that, setting $\hat{\Phi} = (\phi_{\sigma(k)})_{k\in\N}$ and 
$\hat{S} = \{(\sigma(j),\sigma(k))\,:\, (j,k) \in S\}$, $(A,B,U,\hat\Phi)$ satisfies $(\ass)$ and $\hat{S}$ is a $n$-connectedness chain for $(A,B,U,\hat{\Phi})$ for every $n$ in $\mathbf{N}$.
\end{prop}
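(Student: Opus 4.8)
The plan is to construct the bijection $\sigma$ by reordering $\mathbf{N}$ so that each initial segment $\{1,\dots,n\}$ is ``closed'' under the chain $S$ in the sense needed for $n$-connectedness. First I would fix a connectedness chain $S$ for $(A,B,U,\Phi)$. Since $S$ couples every pair of levels in $\mathbf{N}$, the graph $G$ with vertex set $\mathbf{N}$ and edge set $\{\{j,k\} : (j,k)\in S\}$ is connected. The key observation is that a connected graph on a countable vertex set admits an enumeration of its vertices $v_1,v_2,\dots$ such that, for every $n$, the induced subgraph on $\{v_1,\dots,v_n\}$ is connected: one builds this greedily, starting from any vertex $v_1$, and at each step choosing $v_{n+1}$ to be a vertex adjacent to $\{v_1,\dots,v_n\}$ (such a vertex exists as long as not all of $\mathbf N$ has been listed, by connectedness), with the proviso that one must also ensure every vertex eventually appears — this is handled by interleaving the greedy choice with a bookkeeping step that forces the least not-yet-listed integer to be reachable, using a finite connecting path in $G$ whose intermediate vertices are appended in order.

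Having produced such an enumeration, I would set $\sigma(k) = v_k$, so that $\hat\Phi = (\phi_{\sigma(k)})_{k\in\N}$ and $\hat S = \{(\sigma(j),\sigma(k)) : (j,k)\in S\}$. Then the reindexed graph on $\{1,\dots,n\}$ with edges $\hat S \cap \{1,\dots,n\}^2$ is exactly the induced subgraph of $G$ on $\{v_1,\dots,v_n\}$, which is connected by construction; since connectedness of this finite graph is precisely the statement that $\hat S$ couples every pair of levels in $\{1,\dots,n\}$ (conditions $(i)$--$(iii)$ in the definition of coupling, where $(iii)$ holds because $\hat S\subseteq \hat\Phi$-edges carry nonzero matrix elements by definition of $S$), we conclude that $\hat S$ is an $n$-connectedness chain for every $n$. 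It remains to check that $(A,B,U,\hat\Phi)$ still satisfies $(\mathfrak A)$: conditions $(\mathfrak A1)$--$(\mathfrak A3)$ are invariant under permutation of the basis (the eigenvalue family is just relabeled as $(i\lambda_{\sigma(k)})_k$, $\mathrm{span}\{\phi_{\sigma(k)}\} = \mathrm{span}\{\phi_k\}$), and $(\mathfrak A4)$ likewise holds since it is a condition on pairs $j\neq k$ with equal eigenvalues, a property symmetric under relabeling. Finally, non-resonance of $\hat S$ as a connectedness chain follows from non-resonance of $S$ because the condition $|\lambda_{s_1}-\lambda_{s_2}| \neq |\lambda_{t_1}-\lambda_{t_2}|$ (for $(t_1,t_2)$ with $\langle\phi_{t_2},B\phi_{t_1}\rangle\neq 0$) is stated over all of $\mathbf N^2$ and is therefore unaffected by relabeling; this also yields the basis claimed in Theorem~\ref{PRO_L1_estimates}.

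The main obstacle — and the only genuinely delicate point — is the simultaneous requirement that the enumeration $v_1,v_2,\dots$ be (a) a bijection onto all of $\mathbf N$ and (b) have all initial segments connected. A naive greedy breadth-first or depth-first search from $v_1$ achieves (b) automatically but one must argue it exhausts $\mathbf N$; conversely forcing small integers in early can break connectedness of intermediate segments. The clean fix is: maintain a ``frontier'' and at odd steps do one greedy neighbor-extension, while at even steps take the least integer $m$ not yet enumerated, pick in $G$ a path from the current vertex set to $m$, and append its vertices one at a time (each such vertex is adjacent to the previously appended one, preserving connectedness of every initial segment). This guarantees every integer is enumerated within finitely many steps, and every initial segment stays connected. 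I would present this construction carefully but without belaboring the elementary graph-theoretic verification.
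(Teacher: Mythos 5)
Your proposal is correct and rests on the same underlying idea as the paper: enumerate the vertices of the connectedness graph so that every initial segment induces a connected subgraph, then let $\sigma$ be that enumeration. Where you diverge is in how surjectivity of the enumeration is guaranteed. You propose interleaving greedy neighbor-extension with steps that append an explicit path to the least not-yet-listed integer; this works, but it is more elaborate than necessary. The paper's construction sets $\sigma(1)=1$ and $\sigma(n+1)=\min\alpha(\{\sigma(1),\dots,\sigma(n)\})$, where $\alpha(J)$ is the set of vertices outside $J$ with a nonzero coupling to some vertex of $J$, i.e.\ the frontier. Always taking the \emph{minimum} of the frontier resolves the very concern you raise without any bookkeeping: if some integer $m$ were never enumerated, take a path in the graph from $1$ to $m$ and let $w$ be the first vertex on it that is never enumerated; from some step on, $w$ lies permanently in the frontier, and since only finitely many integers are smaller than $w$, the min rule would select $w$ within finitely many further steps, a contradiction. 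Your check that $(\mathfrak{A}1)$--$(\mathfrak{A}4)$ are invariant under relabeling the basis, and your remark that non-resonance of the chain is preserved because it is a condition quantified over all of $\mathbf{N}^2$, are both correct and consistent with what the paper leaves implicit.
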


\begin{proof}
Following \cite[Proof of Theorem~4.2]{genericity-mario-paolo}, $\sigma$ can be constructed recursively by setting $\sigma(1)=1$ and $\sigma(n+1)= \min \alpha \left ( \{ \sigma(1), \ldots,  \sigma(n) \}\right)$, where, 
for every subset $J$ of $\mathbf{N}$,
$
\alpha(J)=\left \{k\in \mathbf{N}\setminus J \mid \langle  \phi_j, B \phi_k \rangle \neq 0 \mbox{ for some } j \mbox{ in } J \right \}.
$
\end{proof}
Proposition~\ref{PRO_Galerkin_connected} proves the first part of the statement of Theorem~\ref{PRO_L1_estimates}.


\section{Modulus tracking}
\label{sec:modulus}

The aim of this section is to prove the following proposition, which is the main step in the proof of 
Theorem~\ref{THE_Control_collectively}.
\begin{prop}\label{PRO_tracking_modulus}
Let $\delta >0$.
Let $(A,B,[0,\delta], \Phi)$ satisfy $(\mathfrak{A})$ and admit a non-resonant connectedness chain. 
Then, for every continuous curve $\uni:[0,T] \rightarrow \mathbf{U}(\CH)$ such that $\uni_{0} = \Id$, $\nss $ in $\mathbf{N}$,
and $\eps>0$, there exist $T_u>0$, a continuous increasing bijection $s:[0,T]\rightarrow [0,T_u]$, and a piecewise constant control function $u:[0,T_u]\rightarrow [0,\delta]$ such that
the propagator $\pro^{u}$ of equation~\eqref{eq:main}
satisfies
$$
\big |  |\langle   \phi_{j} ,\uni_t \phi\rangle | - |\langle   \phi_{j},\pro^{u}_{s(t)}\phi \rangle |\big |< \eps \quad 
\mbox{ for every }  t \in[0,T],\  j \in \mathbf{N}, 
$$
for every $\phi\in\mathrm{span}\{\phi_1,\dots,\phi_\nss\}$ with $\|\phi\|=1$.

\end{prop}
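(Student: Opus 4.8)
The strategy is to reduce the infinite-dimensional tracking problem to a finite-dimensional one on a Galerkin truncation, and then to track the modulus of the desired unitary evolution using the controllability of the auxiliary system $({\Sigma}_n)$ together with the convexification Lemma~\ref{LEM_convexification}. First I would fix the data $\widehat{\Upsilon}:[0,T]\to\mathbf{U}(\CH)$, $\mathfrak{r}\in\N$, $\eps>0$, and, after relabeling the basis via Proposition~\ref{PRO_Galerkin_connected}, assume that the non-resonant connectedness chain $S$ is already an $n$-connectedness chain for every $n$. Choose $N\gg\mathfrak{r}$ large enough that the first $\mathfrak{r}$ coordinates of $\widehat{\Upsilon}_t\phi$ carry all but $\eps/4$ of the mass uniformly in $t$ and in $\phi\in\mathrm{span}\{\phi_1,\dots,\phi_{\mathfrak r}\}$ with $\|\phi\|=1$ (this uses only that $[0,T]$ and the unit sphere of that finite-dimensional span are compact and that $\widehat\Upsilon$ is continuous). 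Having fixed $N$, it suffices to track the $N\times N$ truncated evolution; the tail contributes an error controlled by the estimates in Section~\ref{sec:Framework} on solutions of~\eqref{eq:main}, to be made quantitative exactly as in~\cite{Schrod}.

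Next I would pass, via the time-reparametrization of Subsection~\ref{SEC_time_reparametrization} (Propositions~\ref{prop:4321} and~\ref{PRO_reparam}), to system~\eqref{EQ_main_reparam} and then to the conjugated system $(\Theta_N)$, whose right-hand side is $\vartheta_N(t,v)$ as in~\eqref{eq:vartheta}. The point of $(\Theta_N)$ is that its generators are, entry by entry, of the form $b_{jk}\,e^{i(v(\lambda_k-\lambda_j)+t(\text{diagonal difference}))}$, i.e.\ highly oscillatory exponentials with frequencies that, by the non-resonance hypothesis~\eqref{ugo-c}, separate the edges of $S$ from every other coupled pair. Choosing $v(t)$ to grow linearly (equivalently, a constant control $u$ in the reparametrized time, allowed since $U=[0,\delta]$ and we are free to rescale) and freezing all but one edge at a time, Lemma~\ref{LEM_convexification} applied with $\g_\bullet$ the relevant frequency vector shows that the time-average of $\vartheta_N$ along a suitable fast-oscillating schedule converges to $\nu|b_{jk}|(e^{i\theta}e^{(n)}_{jk}-e^{-i\theta}e^{(n)}_{kj})$ for any prescribed phase $\theta$. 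A standard averaging / convexification argument (the trajectories of the averaged system are uniform limits of trajectories of $(\Theta_N)$ on compact time intervals) then lets us approximate, on the Galerkin level, any trajectory of the auxiliary system $({\Sigma}_N)$; and since $S$ is an $n$-connectedness chain, Proposition~\ref{PRO_contr_dim_finie} gives controllability of $({\Sigma}_N)$ on $SU(N)$, hence we can realize (up to arbitrarily small error) any unitary, in particular the unitary needed to match moduli. Undoing the conjugation $q\mapsto e^{-v A^{(N)}-t\,\diag(B)}\psi$ changes phases but not moduli, which is why the conclusion is stated only for $|\langle\phi_j,\cdot\rangle|$; the reparametrization is absorbed into the increasing bijection $s:[0,T]\to[0,T_u]$.

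The main obstacle is the convexification step: one must show that the oscillatory averaging can be performed \emph{simultaneously} with tracking a given time-dependent target $\widehat\Upsilon_t$, i.e.\ the averaging timescale must be fast compared with the variation of $\widehat\Upsilon$ while still being realizable by piecewise constant controls with values in $[0,\delta]$ after reparametrization. Concretely, one partitions $[0,T]$ into small subintervals on which $\widehat\Upsilon$ is nearly constant, and on each subinterval uses Lemma~\ref{LEM_convexification} (with its control on the spacing $t_{k+1}-t_k>R$, needed so that the reparametrized control stays bounded) to generate the required element of $SU(N)$ approximately; the errors — Galerkin truncation tail, finite-horizon averaging error, and phase drift under conjugation — must be shown to accumulate additively and to be made $<\eps$ by taking $N$ large, the partition fine, and the oscillation fast. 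I would also need to check that the phases $\theta$ and the edges $(j,k)\in S_n$ can indeed be chosen piecewise constant (they can, being finitely many choices per subinterval), so that the resulting $u$ lies in $PC$ with values in $[0,\delta]$, as required.
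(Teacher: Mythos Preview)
Your plan follows the paper's proof: the same ingredients (time reparametrization to $(\Theta_N)$, convexification via Lemma~\ref{LEM_convexification}, tracking in $(\Sigma_n)$, and a Galerkin tail estimate) assembled in the same order. One slip: you write ``the first $\mathfrak r$ coordinates of $\widehat\Upsilon_t\phi$ carry all but $\eps/4$ of the mass'', but since $\widehat\Upsilon_t$ is an arbitrary unitary this cannot be arranged by any choice of $N$; you mean the first $N$ coordinates.

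There is, however, one place where your compression hides a genuine issue. You work at a single Galerkin level $N$ (truncate the target there, design the control on $(\Theta_N)$, and bound the tail beyond $N$), but the tail estimate you need requires $\sum_{k>N}|b_{jk}|^2$ small for every row index $j$ that you want to control. If you are tracking in $SU(N)$ you need all $N$ rows, and this can fail: for a tridiagonal $B$, row $j=N$ always couples to $\phi_{N+1}$ with fixed strength, no matter how large $N$ is. The paper avoids this by a two-scale construction that you should make explicit: first approximate $\widehat\Upsilon_t$ by a curve $\mathbf F^n$ in $SU(n)$ for some $n\geq\mathfrak r$ (Proposition~\ref{PRO_lemme_exist_Galerkin}), track $\mathbf F^n$ by a trajectory of $(\Sigma_n)$ (Proposition~\ref{PRO_contr_dim_finie_Sigma}), and only \emph{then} pick $N\geq n$ so that $\sum_{k>N}|b_{jk}|^2<\mu$ for $j\leq n$; the control is designed on $(\Theta_N)$ to match the $(\Sigma_n)$ trajectory on the first $n$ components only (Propositions~\ref{PRO_tracking_elementaire} and~\ref{PRO_tracking_dim_inf}). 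A second point: your ``partition $[0,T]$ and use controllability on each piece'' needs small-time local controllability of $(\Sigma_n)$, not just controllability, to keep the trajectory near the target during each subinterval; this is precisely Proposition~\ref{PRO_tracking} (symmetric vector fields plus bracket generation), and Proposition~\ref{PRO_contr_dim_finie} alone is not the right citation.
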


The proof of Proposition~\ref{PRO_tracking_modulus} splits in several steps. In Section~\ref{SEC_classical_tracking_results} we recall some classical results of finite dimensional control theory, which, in Section~\ref{SEC_controlability_FGA}, are applied to system~$(\Sigma_{n})$ introduced in Section~\ref{SEC_auxilliary_system}. In Section~\ref{SEC_controllability_higher_dimension}  we prove that 
system~$(\Theta_{N})$ can track in projection the trajectories of system~$(\Sigma_{n})$.
Then we prove tracking for the original infinite dimensional system in Section~\ref{SEC_controllability_infinite_dimensional}.
The proof of Proposition~\ref{PRO_tracking_modulus} is completed in Section~\ref{sec:1234}.

\subsection{Tracking: definitions and general facts}\label{SEC_classical_tracking_results}

Let $M$ be a smooth manifold, $U$ be a subset of $\mathbf{R}$, and $f:M\times U \rightarrow TM$  be 
such that, for every $x$ in $M$ and every $u$ in $U$,  $f(x,u)$ belongs to $T_xM$ and $f(\cdot,u)$ is smooth. Consider the 
control system
\begin{equation}\label{EQ_sys_contr}
\dot{x}=f(x,u),
\end{equation}
whose admissible controls are piecewise constant functions $u:\mathbf{R}\to U$.
For a fixed $u$ in $U$, we denote by $f_u$ the vector field $x \mapsto f(x,u)$. 

\begin{defn}[Tracking]
Given a continuous curve $\mathbf{c}:[0,T]\rightarrow M$ we say that system~(\ref{EQ_sys_contr}) can \emph{track up to time reparametrization} the curve $\mathbf{c}$ if for every $\varepsilon >0$ there exist
$T_u>0$, an increasing bijection $s:[0,T]\rightarrow [0,T_u]$, and a piecewise constant control $u:[0,T_u]\rightarrow U$ such that the solution $x:[0,T_u]\rightarrow M$ of (\ref{EQ_sys_contr}) with control $u$ and initial condition $x(0)=\mathbf{c}(0)$ satisfies $\mathrm{dist}(x(s(t)),\mathbf{c}(t))<\varepsilon$ for every $t \in [0,T]$, where $\mathrm{dist}(\cdot,\cdot)$ is a fixed distance compatible with the topology of $M$. If $s$ can be chosen to be the identity then we say that system~(\ref{EQ_sys_contr}) can track $\mathbf{c}$ \emph{without time reparametrization}.
\end{defn}

Notice that this definition is independent of the choice of the distance $\mathrm{dist}(\cdot,\cdot)$.
Next proposition gives well-known sufficient conditions for tracking. 
It is a simple consequence of small-time local controllability (see for instance~\cite[Proposition~4.3]{Poisson} and~\cite{jaku}).

\begin{prop}\label{PRO_tracking}
If, for every $x$ in $M$,  $\{f(x,u) \mid u \in U\} = \{-f(x,u) \mid u\in U\}$ and $\Lie_x(\{f_u\mid  u \in U\})=T_xM$, then system~(\ref{EQ_sys_contr}) can track up to time reparametrization any continuous curve in $M$. 
\end{prop}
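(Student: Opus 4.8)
The statement to prove is Proposition~\ref{PRO_tracking}: under the symmetry condition $\{f(x,u)\mid u\in U\}=\{-f(x,u)\mid u\in U\}$ and the bracket-generating condition $\Lie_x(\{f_u\mid u\in U\})=T_xM$ for every $x\in M$, the system~\eqref{EQ_sys_contr} can track any continuous curve up to time reparametrization.

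\medskip

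\textbf{Plan of proof.} The plan is to reduce tracking to a local statement and then patch local pieces together along a compactness argument. First I would recall that the hypotheses imply small-time local controllability (STLC) at every point: indeed, the bracket-generating (Hörmander/Chow--Rashevskii) condition gives local controllability with possibly non-symmetric time, but the extra hypothesis that the admissible velocity set is symmetric ($\{f(x,u)\} = \{-f(x,u)\}$) means the set of points reachable from $x$ in time at most $\tau$ contains a neighborhood of $x$ for every $\tau>0$ — i.e.\ we can reach any nearby point in arbitrarily short time, travelling through an arbitrarily small neighborhood. This is the classical consequence of Chow's theorem for symmetric systems; I would cite \cite[Proposition~4.3]{Poisson} and \cite{jaku} as in the paper. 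The key quantitative form I need is: for every $x_0\in M$ and every neighborhood $V$ of $x_0$ there is a smaller neighborhood $W\ni x_0$ such that any two points of $W$ can be joined by an admissible trajectory that stays inside $V$ (and takes arbitrarily short time, though for tracking up to reparametrization the time is immaterial).

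\medskip

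\textbf{Key steps.} Given the continuous curve $\mathbf{c}:[0,T]\to M$ and $\varepsilon>0$, I would proceed as follows. (1) By continuity of $\mathbf{c}$ and compactness of $[0,T]$, the image $\mathbf{c}([0,T])$ is compact; cover it by finitely many balls of radius $\varepsilon/2$ and, using STLC at each center together with uniform continuity of $\mathbf{c}$, choose a partition $0=t_0<t_1<\cdots<t_m=T$ fine enough that each arc $\mathbf{c}([t_{i},t_{i+1}])$ is contained in a neighborhood $V_i$ on which the above local-connecting property holds with tolerance $\varepsilon$, and such that consecutive points $\mathbf{c}(t_i),\mathbf{c}(t_{i+1})$ both lie in the corresponding small neighborhood $W_i$. (2) On each subinterval, apply the local controllability statement to steer the system from $\mathbf{c}(t_i)$ to $\mathbf{c}(t_{i+1})$ by a piecewise constant control, along a trajectory remaining in $V_i$, hence within distance $\varepsilon$ of the arc $\mathbf{c}([t_i,t_{i+1}])$. (3) Concatenate these finitely many controls; this produces a piecewise constant $u:[0,T_u]\to U$ whose trajectory, started at $\mathbf{c}(0)$, passes through each $\mathbf{c}(t_i)$ in order and stays $\varepsilon$-close to $\mathbf{c}$ throughout. (4) Finally, define the reparametrization $s:[0,T]\to[0,T_u]$ by letting $s$ map $t_i$ to the time at which the concatenated trajectory reaches $\mathbf{c}(t_i)$, and interpolating by any increasing homeomorphism on each subinterval; then $\mathrm{dist}(x(s(t)),\mathbf{c}(t))<\varepsilon$ for all $t\in[0,T]$, which is exactly the definition of tracking up to time reparametrization.

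\medskip

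\textbf{Main obstacle.} The only substantive point is the STLC input in step~(1): one must be sure that the bracket-generating condition \emph{together with the velocity-symmetry hypothesis} yields connectedness within an arbitrarily small prescribed neighborhood, uniformly enough that finitely many neighborhoods suffice along the compact curve. This is standard (it is the symmetric-system version of Chow--Rashevskii, and is precisely why the symmetry hypothesis appears), so I would invoke it by reference rather than reprove it; the remaining combinatorial patching and the construction of $s$ are routine. Note that without the symmetry hypothesis one would only control the reachable set in positive time and the concatenation argument would still go through for tracking up to reparametrization, but the symmetry is what makes the cited small-time local controllability statement directly applicable and keeps the trajectory inside the prescribed neighborhoods.
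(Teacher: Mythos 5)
The paper itself gives no proof of this proposition, only the remark that it is ``a simple consequence of small-time local controllability'' together with citations to \cite[Proposition~4.3]{Poisson} and \cite{jaku}; your sketch supplies precisely the standard compactness-and-patching argument those references rely on (symmetric Chow--Rashevskii to connect nearby points by trajectories confined to a prescribed neighborhood, finite covering of the compact curve, local steering and concatenation, reparametrization defined by matching breakpoints). The argument is correct and matches the intended route.
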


%

\subsection{Tracking in $(\Sigma_{n})$}\label{SEC_controlability_FGA}

We now proceed with the first step of the proof of Proposition~\ref{PRO_tracking_modulus}. Using Proposition~\ref{PRO_tracking} we can prove the following.

\begin{prop}\label{PRO_contr_dim_finie_Sigma}
Let $(A,B,U, \Phi)$ satisfy $(\mathfrak{A})$. Let $S$ be a non-resonant
connectedness chain for $(A,B,U,\Phi)$
 such that, for every $n$ in $\mathbf{N}$, 
 $S$ is a $n$-connectedness chain. 
Then, for every $n$ in $\mathbf{N}$, the finite dimensional control
system $({\Sigma}_n)$ can track up to time-reparametrization any curve 
in $SU(n)$.
\end{prop}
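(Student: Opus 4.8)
The plan is to apply Proposition~\ref{PRO_tracking} to the control system $(\Sigma_n)$ viewed as a right-invariant system on the Lie group $SU(n)$. Thus I must verify the two hypotheses of that proposition: the symmetry condition on the set of admissible velocities, and the Lie algebra rank condition (bracket-generation) at every point. Since $(\Sigma_n)$ is right-invariant on $SU(n)$, it suffices to check the bracket-generation condition at the identity, i.e., to show that the Lie algebra generated by the controlled vector fields equals $\mathfrak{su}(n)$.

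First I would note the symmetry condition is immediate: the admissible velocities at a point are the matrices $\nu|b_{jk}|(e^{i\theta}e^{(n)}_{jk} - e^{-i\theta}e^{(n)}_{kj})$ for $(j,k)\in S_n$ and $\theta\in\mathbb{S}^1$, and replacing $\theta$ by $\theta+\pi$ negates this matrix; hence $\{f(x,u)\}=\{-f(x,u)\}$. Next, for the Lie algebra: fixing $(j,k)\in S_n$ and taking $\theta=0$ and $\theta=\pi/2$ shows that both elementary skew-Hermitian matrices $e^{(n)}_{jk}-e^{(n)}_{kj}$ and $i(e^{(n)}_{jk}+e^{(n)}_{kj})$ lie in $\Lie(\{f_u\})$ (the positive scalar $\nu|b_{jk}|$ is harmless). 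Then, exactly as in the proof of Proposition~\ref{PRO_contr_dim_finie}, I use the commutation relation $[e^{(n)}_{jk},e^{(n)}_{lm}]=\delta_{kl}e^{(n)}_{jm}-\delta_{jm}e^{(n)}_{lk}$ together with the fact that $S$ is an $n$-connectedness chain — so the graph on vertices $\{1,\dots,n\}$ with edges $S_n$ is connected — to propagate these generators along the chain and obtain all of $\mathfrak{su}(n)$. Since $(\Sigma_n)$ leaves $SU(n)$ invariant and all generators are traceless, $\Lie(\{f_u\})=\mathfrak{su}(n)=T_gSU(n)$ for every $g$.

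With both hypotheses of Proposition~\ref{PRO_tracking} verified, it follows that $(\Sigma_n)$ can track up to time-reparametrization any continuous curve in $SU(n)$, which is exactly the assertion. The only mild subtlety — and the point I expect to need the most care — is the propagation step: one must check that connectedness of the edge set $S_n$ really does let the bracketing recursion reach every off-diagonal position $(l,m)$, and then that brackets of the real and imaginary elementary Hermitian matrices yield the diagonal elements of $\mathfrak{su}(n)$ as well; this is the same bookkeeping already carried out in Proposition~\ref{PRO_contr_dim_finie}, so I would simply invoke that computation rather than repeat it. Note that the non-resonance of $S$ plays no role here — it was already used to extract the individual generators $e^{(n)}_{jk}-e^{(n)}_{kj}$ in the genuinely infinite-dimensional arguments, whereas in $(\Sigma_n)$ these are available directly by choice of $\theta$.
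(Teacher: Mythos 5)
Your proposal is correct and follows essentially the same route as the paper: verify the symmetry of the admissible velocity set (via $\theta\mapsto\theta+\pi$), reduce the Lie algebra rank condition to showing $\mathrm{Lie}(\{f_u\})=\mathfrak{su}(n)$, and invoke the connectedness-plus-commutator computation already carried out in the proof of Proposition~\ref{PRO_contr_dim_finie}, then apply Proposition~\ref{PRO_tracking}. Your observation that non-resonance of $S$ is not used at this step (only $n$-connectedness) matches the paper, which likewise reduces to ``$B^{(n)}$ is connected.''
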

\begin{proof}
Recall that $
S_n=\{(j,k)\in S\,|\,1\leq j, k \leq n, j\neq k\}.
$
In order to apply Proposition~\ref{PRO_tracking} we notice that  the set
$$
\CV(x) 
= \left\{\nu |b_{jk}|\left (e^{i \theta} e_{jk}^{(n)} - e^{-i \theta} e_{kj}^{(n)}\right) x \,:\,  \theta \in \mathbb{S}^{1},  (j,k) \in S_n \right\} 
$$
is symmetric with respect to $0$ and we are left to prove that the Lie algebra generated by the linear vector fields
$x\mapsto \nu|b_{jk}| (e^{i \theta} e_{jk}- e^{-i \theta}e_{kj})x$
contains the whole tangent space  $\mathfrak{su}(n) x$ 
of the state manifold  $SU(n)$. 
The latter condition is verified if and only if 
$B^{(n)}$ is connected, as shown in the proof of Proposition~\ref{PRO_contr_dim_finie}.
\end{proof}

\subsection{Tracking trajectories of $(\Sigma_{n})$ in $(\Theta_{N})$}\label{SEC_controllability_higher_dimension}
 
Next proposition states that, for every $N\geq n$, system $(\Theta_{N})$, defined in Section~\ref{sec:conv}, can track without time reparametrization, in projection on the first $n$ components,  every trajectory of system~$(\Sigma_{n})$.

Hereafter we denote by 
$\Pi^{(N)}_{n}$ 
the projection  mapping  a $N \times N$ complex matrix to the  $n \times n$ matrix obtained by removing the last $N-n$ columns and the last $N-n$ rows.

\begin{prop}\label{PRO_tracking_elementaire}
Let $\delta > 0$. Let $(A,B,[0,\delta], \Phi)$ satisfy $(\mathfrak{A})$ and admit  a non-resonant
connectedness chain $S$.
For every  $n, N \in \mathbf{N}$, $N \geq n$, $\eps >0$ and for every trajectory $x:[0,T]\to SU(n)$ of system~$(\Sigma_{n})$ with initial condition $x(0) = I_{n}$ there exists a piecewise constant control $u:[0,T] \to [1/\delta, +\infty) $  such that 
the solution $y:[0,T] \to SU(N)$ of system~$(\Theta_{N})$ with initial condition $I_{N}$ satisfies
$$
\| x(t) - \Pi_n^{(N)} (y(t))\|<\eps \quad \mbox{ for every } t \in [0,T].
$$
\end{prop}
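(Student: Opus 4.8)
The plan is to compare the flow of $(\Theta_N)$ with that of $(\Sigma_n)$ via the averaging/convexification result of Lemma~\ref{LEM_convexification}. Recall that a trajectory of $(\Sigma_n)$ is generated by piecewise constant controls $(\theta,(j,k))$ taking values in $\mathbb{S}^1\times S_n$, so it suffices, by concatenation and a standard compactness argument, to show that an arbitrarily short arc of the flow of a single vector field $x\mapsto \nu|b_{jk}|(e^{i\theta}e^{(n)}_{jk}-e^{-i\theta}e^{(n)}_{kj})x$, with $(j,k)\in S_n$ and $\theta$ fixed, can be tracked in projection by $(\Theta_N)$ with an arbitrarily small error, uniformly. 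I would therefore fix $(j,k)\in S_n$ and $\theta$, and aim to produce a control $u\in PC$, $u\geq 1/\delta$, on a short interval, such that the solution $y$ of $(\Theta_N)$ stays $\eps$-close in $\Pi^{(N)}_n$ to the exponential arc.

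The mechanism is the following. In equation~\eqref{eq:vartheta} the matrix $\vartheta_N(t,v)$ has $(p,q)$-entry $b_{pq}\, e^{v(\lambda_q-\lambda_p)i}\, e^{t(b_{qq}-b_{pp})}$ for $p\neq q$; the non-resonance of the connectedness chain says that, among the frequencies $\lambda_q-\lambda_p$ appearing with $b_{pq}\neq 0$, the one equal to $\lambda_j-\lambda_k$ (in absolute value) is attained only by the pair $(j,k)$ and its transpose. Choosing $u$ large and piecewise constant so that $v(t)=\int_0^t u$ sweeps through the interpolation times $t_\ell$ of Lemma~\ref{LEM_convexification} applied to the family of frequencies $\g_m = \lambda_{q_m}-\lambda_{p_m}$ (rescaled so $|\g_1|=|\lambda_j-\lambda_k|$), the time-averaged right-hand side of $(\Theta_N)$ converges to $\nu$ times the single off-diagonal block supported on $(j,k),(k,j)$, i.e.\ exactly the generator of $(\Sigma_n)$ — while the contribution of the diagonal terms $b_{pp}$ is killed because they can be absorbed: note $\hat B^{(N)}$ has zero diagonal, and the factor $e^{t\diag(B)}$ only rotates phases, so one replays the averaging with the time variable frozen over each short sub-step, or equivalently incorporates $t\diag(B)$ into the frequencies. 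The large values of $u$ (equivalently, the time-reparametrization of Proposition~\ref{PRO_reparam}, which lets us work with $(\Theta_N)$ and controls in $[1/\delta,\infty)$) make the averaging error in the classical averaging theorem go to zero; the spacing condition $t_{k+1}-t_k>R$ from the Lemma is what guarantees that the non-resonant frequencies also average to zero with a uniformly controlled rate.

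Concretely I would: (i) invoke Proposition~\ref{PRO_reparam} to pass from $(\Theta_N)$ with control $u\geq1/\delta$ to the equivalent reparametrized picture; (ii) for a single-field arc of $(\Sigma_n)$ over a short time $\tau$, write down the explicit piecewise-constant choice of $u$ using the times $t^k_\ell$ of Lemma~\ref{LEM_convexification} (applied to the finite frequency list coming from the nonzero entries of $B^{(N)}$, shifted by the diagonal phases), so that the Riemann sums of $\vartheta_N$ over a mesh of the interval converge to the constant matrix $\nu|b_{jk}|(e^{i\theta}e^{(N)}_{jk}-e^{-i\theta}e^{(N)}_{kj})$; (iii) apply a Gronwall/averaging estimate to conclude $\|y(t)-z(t)\|<\eps$ on $[0,\tau]$, where $z$ solves the autonomous equation with that constant matrix; (iv) observe that $\Pi^{(N)}_n$ of that constant matrix is the generator of $(\Sigma_n)$, and that $\Pi^{(N)}_n$ is $1$-Lipschitz, so the projected solutions stay close; (v) concatenate over the finitely many sub-arcs of the given trajectory $x(\cdot)$, readjusting the error budget, and use that all constants are uniform over $SU(n)$ because the vector fields are linear with bounded coefficients. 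The main obstacle is step (ii)–(iii): making the averaging estimate genuinely uniform, i.e.\ controlling simultaneously the $2^{n-1}$-point (and then $h\to\infty$) averages for the resonant frequency and the decay of \emph{all} the other, non-resonant frequencies (including the non-integer ones $\g_l\notin\Z$, which only average out in the $h\to\infty$ limit), while keeping the total elapsed time bounded so that the reparametrization $s$ and the $L^1$ size of $u$ remain finite. This is exactly where the precise statement of Lemma~\ref{LEM_convexification} — the existence of the spaced sequence $(t_k)$ with $\frac1h\sum\f(t_k)\to(\nu\xi,0,\dots,0)$ — is used, and the bookkeeping of which frequencies collapse to which limit is the delicate part.
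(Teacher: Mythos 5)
Your proposal follows essentially the same route as the paper's proof: partition the trajectory of $(\Sigma_n)$ into constant-control arcs, apply Lemma~\ref{LEM_convexification} on each arc to the family of spectral gaps $\lambda_l-\lambda_m$ with $b_{lm}\neq 0$ (with the diagonal phases coming from $t\diag(B)$ absorbed into the argument of $v$), deduce uniform convergence of $\int_0^t\vartheta_N(s,v_\eta(s))\,ds$ to $\int_0^t M(s)\,ds$ and hence of the solutions, and finally realize the step function $v_\eta$ as the integral of an admissible piecewise constant $u\ge 1/\delta$. The one small inaccuracy is your reading of the spacing condition $t_{k+1}-t_k>R$: it does not govern the averaging rate of the non-resonant frequencies (that comes from the $2^{n-1}$-point construction and the $h\to\infty$ limit in the lemma), but rather ensures that the jumps of $v_\eta$ are large enough for its piecewise linear interpolation to have slope everywhere at least $\bar\delta>1/\delta$, so that $u=\dot{\check{v}}_{\eta}$ is admissible.
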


\begin{proof}
%
Given  a trajectory $x(t)$ of system~$(\Sigma_{n})$ with initial condition $x(0) = I_{n}$, denote by  $(j, k) = (j(t), k(t)) \in S_{n}$ and $\theta = \theta(t)\in \mathbb{S}^{1}$ its corresponding control functions. Being these functions piecewise constant, it is possible to  
write $[0,T] = \bigcup _{p=0}^{q} [t_{p}, t_{p+1}]$ in such a way that $j,k$, and $\theta$ are constant on $[t_{p}, t_{p+1})$ for every $p=0,\ldots,q$.

We are going to construct the control 
$u$
by applying recursively Lemma~\ref{LEM_convexification}.  
Let $\bar{\delta} > 1/\delta$.
Fix $p \in \{0,\ldots,q\}$ and $j,k, \theta$ such that $(j(t),k(t)) = (j,k)$ and $\theta(t)=\theta$ on $[t_{p},t_{p+1})$. Apply Lemma~\ref{LEM_convexification} with $\g_{1} = \lambda_{j} - \lambda_{k}$, 
$\{\g_{2}, \ldots,\g_{\K}\} = \{\lb_{l} - \lb_{m} \mid l,m \in \{1,\ldots,N\}, b_{lm}\neq0, \{l,m\}\neq\{j,k\},$ and $l\neq m \}$, 
$R = \max_{(l,m) \in S_{n}} \frac{|b_{ll} - b_{mm}|}{|\lb_{l} - \lb_{m}|} T + \bar{\delta} T$, and $t_0 = t_{0}(p)$ to be fixed later depending on $p$. Then, for every $\eta >0$, there exist 
$h = h(p) > 1/\eta$ and a sequence $(w_{\al}^{p})_{\al=1}^{h}$ such that $w_{1}^{p} \geq t_{0}$, $w_{\al}^{p}-w_{\al-1}^{p} > R$, and such that 
$$
\left|\frac{1}{h} \sum_{\al=1}^{h} e^{i(\lb_{k} - \lb_{j}) w^{p}_{\al}}  - \nu \frac{\bar{b}_{jk}}{|b_{jk}|} e^{i\theta}\right| < \eta,
$$
and
$$
\left|\frac{1}{h} \sum_{\al=1}^{h} e^{i(\lb_{l} - \lb_{m}) w^{p}_{\al} }\right| < \eta,
$$
for every $l,m \in \{1,\ldots,N\}$ such that $b_{lm}\neq0, \{l,m\}\neq\{j,k\},$ and $l\neq m$.

Set $\tau^{p}_{\al} = t_{p}+ (t_{p+1} - t_{p}){\al/h}$, $\al = 0,\ldots, h$, and define the piecewise constant function
\begin{equation}\label{eq:vht}
v_{\eta}(t) = \sum_{p=0}^{q} \sum_{\al=1}^{h} 
\left(w_{\al}^{p}  +  i \tau^{p}_{\al} \frac{b_{jj} - b_{kk}}{\lb_{j} - \lb_{k}} \right) \chi_{[\tau^{p}_{\al-1}, \tau^{p}_{\al})}(t)\,.
\end{equation}
 Note that by choosing $t_{0}(p) = w^{p-1}_{h(p-1)} + R$ for $p=1,\ldots,q$ and $t_{0}(0) = R$ 
we have that $v_{\eta}(t)$ is non-decreasing.

Set $M(t) =  \nu |b_{j(t)k(t)}| \left(e^{i \theta(t)} e^{(N)}_{j(t)k(t)} - e^{-i \theta(t)} e^{(N)}_{k(t)j(t)}\right)$.
From the construction of $v_{\eta}$ we have
\begin{equation}\label{eq:convergenza}
\int_{0}^{t} \vartheta_{N}(s,v_{\eta}(s)) ds
\stackrel{\eta \to 0}{\longrightarrow}
\int_{0}^t M(s) ds, 
\end{equation}
uniformly with respect to $t \in [0,T]$, where $\vartheta_{N}$ is defined as in~\eqref{eq:vartheta}.
This convergence guarantees (see for example~\cite[Lemma~8.2]{book}) that, denoting by 
$y_{\eta}(t)$ the solution  of system~$(\Theta_{N})$ with control $v_{\eta}$ and initial condition $I_{N}$,
$\Pi^{(N)}_{n}(y_{\eta}(t))$ converges to $x(t)$ as $\eta$ tends to $0$ uniformly with respect to  $t \in [0,T]$.
Hence, for every $\eta$ sufficiently small, 
$$
\|\Pi_n^{(N)} \circ y_{\eta}(t) -x(t)\|<\frac{\eps}{2} \quad \mbox{ for every } t \in [0,T].
$$

If the functions $v_{\eta}$ 
were of the type $t \mapsto \int_0^t u(s) ds$ for some $u:[0,T] \to [1/\delta, \infty)$ piecewise constant,
then we would be done.
For every $\eta > 0$ consider
the piecewise linear continuous function $\check{v}_{\eta}$ uniquely defined on every interval $[t_{p}, t_{p+1})$ by
$$
\left\{
\begin{array}{rcll}
\check{v}_\eta(t_{p})&=& w^{p-1}_{h} & \\
\ddot{\check{v}}_{\eta}(t)&=&0&\mbox{if }t\in\bigcup_{\al=1}^h [\tau^{p}_{\al-1},\tau^{p}_{\al-1}+\frac{(t_{p+1} - t_{p})}{h^2}),
 \\
\check{v}_\eta(\tau^{p}_{\al-1}+\frac{(t_{p+1} - t_{p})}{h^2})&=&w^{p}_\al&\mbox{for }\al=1,\dots,h,\\
\dot{\check{v}}_{\eta}(t)&=&\bar\delta&\mbox{if }t\in\bigcup_{\al=1}^h [\tau^{p}_{\al-1}+\frac{(t_{p+1} - t_{p})}{h^2},\tau^{p}_{\al}),\end{array}
\right.
$$
where we set $w^{-1}_{h} = 0$
(see Figure~\ref{fig:sequence}).
\begin{figure}
\label{fig:sequence}
\begin{center}
\includegraphics[width=12cm]{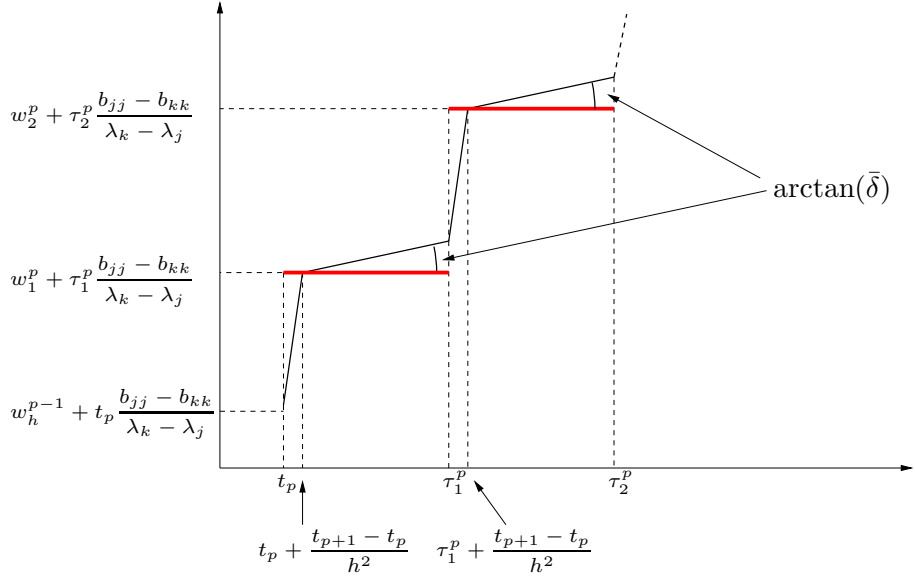}
\caption{The piecewise constant function $v_{\eta}$ (in bold) and the piecewise linear approximation $\check{v}_\eta$ with slope greater than $\bar\delta$.} 
\end{center}
\end{figure}
On each interval $ [\tau^{p}_{l-1}+\frac{(t_{p+1} - t_{p})}{h^2},\tau^{p}_{l})$ the difference between $v_{\eta}$ and $\check{v}_\eta$ is bounded in absolute value by $\bar\delta (t_{p+1} - t_{p})/h $.
Therefore,
$$
\sup\left\{\left\|
\vartheta_{N}(t,v_{\eta}(t)) - 
\vartheta_{N}(t, \check{v}_{\eta}(t))
\right\|
\mid t\in\bigcup_{l, p} \left[\tau^{p}_{l-1}+\frac{(t_{p+1} - t_{p})}{h^2},\tau^{p}_{l}\right)\right\}
$$
tends to zero
as $\eta$ tends to $0$.

Since $\|\vartheta_{N}(t,v)\|$ is uniformly bounded with respect to $(t,v) \in [0,T]\times\R$ and the measure of  
$\bigcup_{l,p} [\tau^{p}_{l-1},\tau^{p}_{l-1}+\frac{(t_{p+1} - t_{p})}{h^2})$ goes to $0$ as $\eta$ goes to $0$, we have
$$
\int_{0}^t \left( 
\vartheta_{N}(\tau, v_{\eta}(\tau)) -\vartheta_{N}(\tau, \check{v}_{\eta}(\tau))
 \right) 
 d\tau \stackrel{\eta \to 0}{\longrightarrow}0\quad\mbox{ uniformly with respect to }t \in[0,T]\,.
$$
In particular, for  $\eta$ sufficiently small, if $\check{y}_{\eta}$ denotes the solution of system~$(\Theta_{N})$ with control $\check{v}_\eta$ and initial condition $I_{N}$, then
$$
\|y_{\eta}(t) - \check{y}_{\eta}(t)\| < \frac\eps 2 \quad \mbox{ for every } t \in [0,T].
$$
Finally, $u$ can be taken
as the derivative of $\check{v}_{\eta}$, which is defined almost everywhere.
\end{proof}

\subsection{Tracking trajectories of  $(\Sigma_{n})$ in the original system}\label{SEC_controllability_infinite_dimensional}

Next proposition extends the tracking property obtained in the previous section from the system $(\Theta_{N})$ to the infinite dimensional system~\eqref{EQ_main_reparam}.
We denote by $\Pi_{n}: \CH \to \C^{n}$ the projection mapping 
$\psi  \in \CH$ to $(\langle \phi_{1}, \psi \rangle, \ldots, \langle \phi_{n}, \psi\rangle) \in \C^{n}$ and we write $\phi^{(n)}_{k}$ for $\Pi_{n}\phi_{k}$. 

\begin{prop}\label{PRO_tracking_dim_inf}
Let $\delta > 0$ and let  $(A,B,[0,\delta], \Phi)$ satisfy $(\ass)$.
For every $\eps>0$, $n \in  \mathbf{N}$, and for every trajectory $x:[0,T] \to SU(n)$ of system~$(\Sigma_{n})$ with initial condition $I_{n}$ there exists a piecewise constant function $u:[0,T] \rightarrow [1/\delta, +\infty)$ such that the propagator $\pro^{u}$ of~\eqref{EQ_main_reparam} satisfies 
$$
\big|  | \langle \phi^{(n)}_{j} , x(t) \Pi_n\phi\rangle | - | \langle \phi_j , \pro^{u}_{t} (\phi)\rangle | \big| <\eps
$$
 for every $\phi\in\mathrm{span}\{\phi_1,\dots,\phi_n\}$ with $\|\phi\|=1$ and every  
 $t$ in $[0,T]$ and  $j$ in $\N$.
\end{prop}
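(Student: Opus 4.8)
The plan is to chain three facts: the tracking of trajectories of $(\Sigma_n)$ by trajectories of $(\Theta_N)$ in projection on the first $n$ components (Proposition~\ref{PRO_tracking_elementaire}); the exact correspondence, via a time-dependent diagonal unitary change of variables, between $(\Theta_N)$ and the $N$-th Galerkin approximation of~\eqref{EQ_main_reparam}; and a Galerkin approximation estimate for~\eqref{EQ_main_reparam} that is uniform with respect to the (piecewise constant, $[1/\delta,\infty)$-valued) control on the fixed interval $[0,T]$. Concretely, fix $x:[0,T]\to SU(n)$, $\eps>0$ and $n$, and pick an auxiliary $\eps'>0$ with $\eps'<\eps/2$ and $\sqrt{2n\eps'}<\eps/2$. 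Using the uniform Galerkin estimate (which follows from $(\mathfrak{A})$; see~\cite{book} and~\cite{Schrod}), choose $N\ge n$ so large that for \emph{every} piecewise constant $u:[0,T]\to[1/\delta,\infty)$ the solution $\psi^{(N)}$ of $\dot\psi^{(N)}=(uA^{(N)}+B^{(N)})\psi^{(N)}$ with $\psi^{(N)}(0)=\Pi_N\phi$ satisfies $\|\pro^{u}_t(\phi)-\psi^{(N)}(t)\|<\eps/2$ for all $t\in[0,T]$ and all unit $\phi\in\mathrm{span}\{\phi_1,\dots,\phi_n\}$ (here $\psi^{(N)}$ is extended by zero on the components larger than $N$). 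Then apply Proposition~\ref{PRO_tracking_elementaire} to $x$ with this $N$ and tracking precision $\eps'$: this produces the piecewise constant $u:[0,T]\to[1/\delta,\infty)$ whose $(\Theta_N)$-propagator $y:[0,T]\to SU(N)$ from $I_N$ satisfies $\|x(t)-\Pi_n^{(N)}(y(t))\|<\eps'$ on $[0,T]$.

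Next I would relate $y$ to $\psi^{(N)}$. Setting $v(t)=\int_0^t u$, the computation preceding~\eqref{eq:vartheta} shows that $q(t)=e^{-v(t)A^{(N)}-t\diag(B)}\psi^{(N)}(t)$ solves exactly $(\Theta_N)$; since $v(0)=0$ we get $q(0)=\Pi_N\phi$ and hence $q(t)=y(t)\Pi_N\phi$, i.e.\ $\psi^{(N)}(t)=e^{v(t)A^{(N)}+t\diag(B)}\,y(t)\Pi_N\phi$. Because $A^{(N)}$ and $\diag(B)$ are purely imaginary diagonal matrices (the latter since $B$ is skew-adjoint), the factor $e^{v(t)A^{(N)}+t\diag(B)}$ is diagonal and unitary, so $|\la\phi_j,\psi^{(N)}(t)\ra|=|\la\phi_j,y(t)\Pi_N\phi\ra|$ for $1\le j\le N$. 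Moreover $\phi$ is supported on the first $n$ coordinates, so $\la\phi_j,y(t)\Pi_N\phi\ra=\la\phi^{(n)}_j,\Pi_n^{(N)}(y(t))\,\Pi_n\phi\ra$ for $1\le j\le n$, which is within $\eps'$ of $\la\phi^{(n)}_j,x(t)\Pi_n\phi\ra$. Thus $\big|\,|\la\phi^{(n)}_j,x(t)\Pi_n\phi\ra|-|\la\phi_j,\psi^{(N)}(t)\ra|\,\big|<\eps'$ for $1\le j\le n$; while for $j>n$ the first modulus vanishes and, using that $x(t)$ and $y(t)$ are unitary together with the previous bound, $\sum_{j\le n}|\la\phi_j,y(t)\Pi_N\phi\ra|^2\ge\|\Pi_n\phi\|^2-2n\eps'=1-2n\eps'$, so that $|\la\phi_j,\psi^{(N)}(t)\ra|\le\sqrt{2n\eps'}$ for every $j>n$ (it is $0$ for $j>N$).

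Finally I would combine these estimates with the Galerkin bound from the first step: for every $j$ and $t$, the quantity $\big|\,|\la\phi^{(n)}_j,x(t)\Pi_n\phi\ra|-|\la\phi_j,\pro^{u}_t(\phi)\ra|\,\big|$ is at most the bound of the previous paragraph plus $\|\pro^{u}_t(\phi)-\psi^{(N)}(t)\|<\eps/2$, which is $<\eps$ by the choice of $\eps'$. The main obstacle is the uniform Galerkin estimate invoked at the start: since the tracking control produced by Proposition~\ref{PRO_tracking_elementaire} (hence its $L^1$ norm, and the physical time once the reparametrization is undone) depends on $N$, one genuinely needs an a priori control of $\|\pro^{u}_t(\phi)-\psi^{(N)}(t)\|$ holding uniformly over all admissible controls on $[0,T]$; this is the only point where the infinite-dimensional dynamics and the possible unboundedness of $B$ have to be handled directly, and I would import it from the literature rather than reprove it here.
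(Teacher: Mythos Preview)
Your chain of reductions---the diagonal unitary change of variables relating $\psi^{(N)}$ to $y$, the modulus identities, and the treatment of $j>n$ by unitarity---is correct. The gap is exactly where you locate it: the uniform Galerkin estimate. Under $(\mathfrak{A})$ alone, with $B$ possibly unbounded, no bound of the form $\|\pro^{u}_t(\phi)-\psi^{(N)}(t)\|<\eps/2$ valid for \emph{all} piecewise constant $u:[0,T]\to[1/\delta,\infty)$ is available in \cite{book} (a finite-dimensional control text) or in \cite{Schrod}. To see why the estimate is genuinely delicate, write the variation-of-constants identity $\Pi_N Q(t)=R(t,0)\Pi_N\phi+\int_0^t R(s,t)P_N(s)\,ds$ for the projected infinite-dimensional interaction picture, with $R$ the $(\Theta_N)$-resolvent and $P_N$ the tail coupling. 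Only the first $n$ entries of $P_N$ are made small by the choice of $N$ (they involve $\sum_{k>N}|b_{jk}|^2$ for $j\le n$); the entries with $n<j\le N$ involve $\sum_{k>N}|b_{jk}|^2$ for those $j$, which is finite but not controlled. For a generic control, the unitary $R(s,t)$ mixes these uncontrolled components into the first $n$, and the argument collapses.

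The paper avoids a uniform estimate altogether. It writes the same variation-of-constants formula, but then exploits a feature of the \emph{specific} family of controls $u^\eta$ furnished by Proposition~\ref{PRO_tracking_elementaire}: as $\eta\to 0$, the resolvent $R^\eta(s,t)$ converges uniformly (by~\eqref{eq:convergenza}) to the propagator of $\dot z=M(t)z$ with $M$ supported in the upper-left $n\times n$ block, an operator that is block-diagonal and hence does not feed the last $N-n$ components of $P^\eta_N$ into the first $n$. Consequently, for $\eta$ small, $\big\|\Pi_n^{(N)}\!\int_0^t R^\eta(s,t)P^\eta_N(s)\,ds\big\|<2T\sqrt{\mu n}$, and $N$ is chosen from $\mu$ \emph{before} $\eta$ is taken small. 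This interplay---$N$ fixed first, then the asymptotic block-diagonality of $R^\eta$ used to neutralise the large part of $P^\eta_N$---is precisely what your decoupled scheme (choose $N$ uniformly in $u$, then pick the control) cannot reproduce from $(\mathfrak{A})$ alone.
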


\noindent {\it Proof.} Consider $\mu > 0$.
 For every $j\in\N$ the hypothesis that $\phi_j$ belongs to $D(B)$ implies that
the sequence $(b_{jk})_{k\in \N}$ is in $\ell^2$. 
It is therefore possible to choose
$N\geq n $ such that
$\sum_{k> N}|b_{jk}|^2<\mu$ for every $j=1,\dots, n$. By Proposition~\ref{PRO_tracking_elementaire}, for every $\eta > 0$ and  
for every trajectory $x:[0,T]\to SU(n)$ of system~$(\Sigma_{n})$ with initial condition $
 I_{n}$, there exists a piecewise constant control $u^{\eta}:[0,T] \to [1/\delta, +\infty) $ such that 
the solution $y^{\eta}$ of system~$(\Theta_{N})$ with initial condition $I_{N}$ satisfies
$$
\| x(t) - \Pi_{n}^{(N)} (y^{\eta}(t))\|<\eta.
$$

Denote by $R^{\eta}(t,s):\C^{N} \to \C^{N}, 0\leq s,t \leq T$, the resolvent of system $(\Theta_{N})$ associated with the control $v^{\eta}(t) = \int_{0}^{t}u^{\eta}(\tau) d\tau$, so that $y^{\eta}(t) = R^{\eta}(t,0)$.
Fix $\phi\in\mathrm{span}\{\phi_1,\dots,\phi_n\}$ with $\|\phi\|=1$ and set
$$
Q^{\eta}(t) = e^{-v^{\eta}(t) A -t \diag(B)}  \pro^{u^{\eta}}_{t} (\phi).
$$
The components of $Q^{\eta}(t)$, say
$$
q^{\eta}_j(t)=e^{- i \lambda_j v^{\eta}(t) - t b_{jj}}\langle \phi_{j}, \pro^{u^{\eta}}_{t} (\phi)\rangle, \quad j\in \N\,,
$$
satisfy, for almost every $t\in [0,T]$,
 \begin{equation}\label{eq:qdot}
 \dot q_j^{\eta}(t)=\sum_{k=1}^\infty b_{jk}e^{i(\lambda_k-\lambda_j)v^{\eta}(t) + t (b_{kk} - b_{jj})}q_k^{\eta}(t).
 \end{equation}
Therefore
$Q^{\eta}_{N}(t) = \Pi_{N}Q^{\eta}(t) = (q^{\eta}_1(t), \ldots,q^{\eta}_N(t))^{T} $ satisfies the time-dependent linear equation
$$
\dot Q^{\eta}_{N}(t) = \vartheta_{N}(t,v^{\eta}(t)) Q^{\eta}_{N}(t)  + P^{\eta}_{N}(t),
$$
where
$P^{\eta}_{N}(t) = 
 ( \sum_{k>N} b_{1k} e^{i(\lb_{k} - \lb_{1})v^{\eta}(t) + t (b_{kk} - b_{11})} q^{\eta}_{k}, \ldots, \sum_{k>N} b_{Nk} e^{i(\lb_{k} - \lb_{N})v^{\eta}(t) + t (b_{kk} - b_{NN})} q^{\eta}_{k} )^{T}$.
Hence
$$
Q^{\eta}_{N}(t)  = R^{\eta}(t,0) \Pi_N\phi + \int_{0}^{t} R^{\eta}(s,t)  P^{\eta}_{N}(s) ds.
$$
Consider the projection of the equality above on the first $n$ coordinates. 
Notice that, because of the choice of $N$, the norm of the first $n$ components of $P^{\eta}_{N}(t)$ is smaller than $\sqrt{\mu n}$.
By~\eqref{eq:convergenza}, $R^{\eta}(s,t)$ converges uniformly, as $\eta$ tends to $0$, to a time-dependent operator from $\C^{N}$ into itself which preserves the norm of the first $n$ components. Then, there exists $\eta$ sufficiently small such that
$$
\left\|  \Pi^{(N)}_{n} \left(\int_{0}^{t} R^{\eta}(s,t)  P^{\eta}_{N}(s) ds \right) \right\| < 2T\sqrt{\mu n} .
$$
Hence
\begin{align}
\| \Pi_{n} Q^{\eta}(t) - x(t) \Pi_n\phi\| 
&\leq  \| \Pi_{n} Q^{\eta}(t)  -  \Pi_{n}^{(N)} (R^{\eta}(t,0)) \Pi_n\phi\| + 
\|\Pi_{n}^{(N)} (R^{\eta}(t,0) )\Pi_n\phi - x(t) \Pi_n\phi\| \nonumber \\
& \leq 2T\sqrt{\mu n} + \eta< \frac \eps 2,\label{eq:normapiccola}
\end{align}
if $\mu <\eps^{2}/(32nT^{2})$ and  $\eta < T \sqrt{\mu n }$. In particular, for $j=1,\ldots,n$,
$$
\big| | \langle \phi^{(n)}_{j} , x(t)\Pi_n \phi\rangle | - | \langle \phi_j,  \pro^{u^{\eta}}_{t} (\phi) \rangle | \big|  < \frac \eps 2.
$$
It remains to prove the statement for $j>n$. From~(\ref{eq:normapiccola}) it follows
$$
\sum_{j=1}^{n} |q_{j}^{\eta}(t)|^{2} >  \left(1-\frac \eps 2 \right)^{2},
$$
then, since $ \pro^{u}_{t} $ is a unitary operator for every $t$, we have
$$
\sum_{j>n}  |q_{j}^{\eta}(t)|^{2} < 1 - \left(1-\frac \eps 2\right)^{2} < \eps\,. \eqno{\Box}
$$

\subsection{Proof of modulus tracking}\label{sec:1234}

The following proposition allows to reduce the  tracking problem stated in Proposition~\ref{PRO_tracking_modulus}
to the tracking of a curve in $SU(n)$.

\begin{prop}\label{PRO_lemme_exist_Galerkin}
For every continuous curve $\uni:[0,T]\rightarrow \mathbf{U}(\CH)$,  $\eps>0$, and $\nss \in \N$, there exist $n \geq \nss$ and a continuous curve $\mathbf{F}^{n}:[0,T]\rightarrow SU(n)$ such that 
$| \langle \phi_{j},\uni_t \phi_k \rangle  -  \langle \phi^{(n)}_{j},\mathbf{F}^{n}(t) \phi^{(n)}_k \rangle| <\eps$ for every $t$ in $[0,T]$, $1 \leq k\leq \nss$, and $j\in\N$.
\end{prop}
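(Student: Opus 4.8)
The plan is to exploit the fact that on a fixed finite-dimensional subspace the curve $\uni$ looks, to within arbitrary precision, like a curve of matrices whose action on the first $\nss$ coordinates is what we need to approximate, and then to correct this finite block to a genuine $SU(n)$-valued curve. First I would fix $\eps>0$ and $\nss$. Since $\uni_t$ is unitary, for each $t$ the vectors $\uni_t\phi_1,\dots,\uni_t\phi_\nss$ have unit norm, so there exists $N\geq\nss$ (depending a priori on $t$) such that $\sum_{j>N}|\langle\phi_j,\uni_t\phi_k\rangle|^2$ is small for $k=1,\dots,\nss$; by continuity of $\uni$ on the compact interval $[0,T]$ and a compactness/uniform-continuity argument this $N$ can be chosen uniformly in $t$. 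Thus the ``tail'' of $\uni_t\phi_k$ beyond level $N$ is uniformly negligible, and the truncated matrix $\Pi^{(N)}_{\text{cols }1..\nss}\big(\langle\phi_j,\uni_t\phi_k\rangle\big)$ captures $\uni$ on the relevant block up to an error controlled by $\eps$.

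Next I would produce the curve in $SU(n)$ for a suitable $n\geq N$. Consider the $N\times N$ matrix $G(t)=(\langle\phi_j,\uni_t\phi_k\rangle)_{j,k=1}^N$. This is in general a contraction, not unitary, but its first $\nss$ columns are nearly orthonormal (again because the tails are small and $\uni_t$ is an isometry). The standard trick is a unitary dilation: one can find $n$ (for instance $n=2N$ suffices, or even $n=N+\nss$ with care) and a continuous curve $\mathbf{F}^n:[0,T]\to U(n)$ whose upper-left $N\times N$ block is $C(t)$-close to $G(t)$, where the approximation improves as the tail bound improves; concretely, complete the nearly-orthonormal system $\uni_t\phi_1,\dots,\uni_t\phi_\nss$ (projected to $\C^n$ and slightly corrected by Gram--Schmidt, which is continuous in $t$) to a unitary $n\times n$ matrix, choosing the completion continuously in $t$ since $U(n)$ is connected and the construction is canonical enough to avoid monodromy obstructions. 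Finally, to land in $SU(n)$ rather than $U(n)$, multiply the last column of $\mathbf{F}^n(t)$ by $\overline{\det \mathbf{F}^n(t)}$, a continuous unimodular correction that does not affect the first $\nss$ columns and hence does not affect the entries $\langle\phi^{(n)}_j,\mathbf{F}^n(t)\phi^{(n)}_k\rangle$ for $k\leq\nss$. One also needs $\mathbf{F}^n(0)$ to be compatible with $\uni_0$ if required, but since $\uni_0=\Id$ in the intended application (and in any case the statement only requires closeness, not equality), this is automatic or a harmless adjustment.

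The routine estimates — choosing $N$ uniformly, bounding the Gram--Schmidt correction by the tail, bounding the effect of the determinant normalization — are all of triangle-inequality type and I would not belabor them. The step I expect to be the genuine obstacle, and the one deserving the most care, is the \emph{continuity in $t$} of the dilation/completion: naively completing an orthonormal $\nss$-frame to an orthonormal $n$-frame involves a choice, and one must argue that this choice can be made continuously (equivalently, that the relevant Stiefel-bundle section exists over $[0,T]$). Because $[0,T]$ is contractible this is not hard in principle, but the cleanest route is an explicit one: apply Gram--Schmidt to the fixed ordered list $\big(\uni_t\phi_1,\dots,\uni_t\phi_\nss,\Pi_n\phi_1,\dots,\Pi_n\phi_n\big)$ (keeping only $n$ independent vectors), which is a continuous — indeed smooth away from degeneracies, and here the near-orthonormality of the first $\nss$ vectors keeps us away from degeneracies — recipe yielding $\mathbf{F}^n(t)\in U(n)$ depending continuously on $t$, after which the determinant fix gives the desired curve in $SU(n)$.
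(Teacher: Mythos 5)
Your approach is essentially the paper's: truncate to the first $n$ coordinates, observe that the truncated vectors $\psi_k^n(t)=\sum_{l\leq n}\langle\phi_l,\uni_t\phi_k\rangle\phi_l$ are nearly orthonormal uniformly in $t$ for $n$ large (the paper phrases this via Dini's theorem applied to $g_n(t)=\sum_{k\leq\nss}\sum_{l\leq n}|\langle\phi_l,\uni_t\phi_k\rangle|^2\nearrow\nss$), complete to a basis of $\mathrm{span}\{\phi_1,\dots,\phi_n\}$, and Gram--Schmidt. You correctly flag two points the paper leaves implicit or omits. First, Gram--Schmidt lands only in $U(n)$, not $SU(n)$; your determinant correction on the last column (which leaves the first $\nss$ columns, hence the entries $\langle\phi_j^{(n)},\mathbf{F}^n(t)\phi_k^{(n)}\rangle$ for $k\leq\nss$, untouched) is exactly the needed fix. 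Second, you correctly identify the continuity of the completion as the delicate step --- the paper merely asserts that the basis $\mathcal{B}^n(t)$ ``depends continuously on $t$'' without construction.

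However, your proposed concrete recipe for that second point does not quite work: applying Gram--Schmidt to the fixed ordered list $(\psi_1^n(t),\dots,\psi_\nss^n(t),\Pi_n\phi_1,\dots,\Pi_n\phi_n)$ ``keeping only $n$ independent vectors'' is \emph{not} a continuous operation in $t$, because the set of retained columns can change discontinuously as the projection of some $\Pi_n\phi_j$ onto $\mathrm{span}\{\psi_1^n(t),\dots,\psi_\nss^n(t)\}^\perp$ crosses zero. The near-orthonormality of the first $\nss$ entries keeps the \emph{beginning} of the recipe nondegenerate, but does not protect against degeneracy in the later columns. The obstacle is genuine (and, as noted, also glossed over in the paper); a correct argument must either (a) invoke the homotopy lifting property --- $[0,T]$ is contractible, $U(n)\to V_\nss(\C^n)$ is a fiber bundle, hence the continuous $\nss$-frame $t\mapsto(\psi_1^n(t),\dots,\psi_\nss^n(t))$ (after Gram--Schmidt on these $\nss$ alone) lifts to a continuous $U(n)$-valued curve --- or (b) partition $[0,T]$ into finitely many subintervals on each of which a fixed subset of $\{\Pi_n\phi_1,\dots,\Pi_n\phi_n\}$ remains transverse, and patch the resulting local completions by a continuous correction at the endpoints.
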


\begin{proof}
For every $n$ in $\N$, define the function $g_{n}:t\mapsto \sum_{k=1}^{\nss}\sum_{l=1}^n  |\langle\phi_l, \uni_t \phi_k  \rangle|^{2}$. 
The functions $g_{n}$ are continuous and $g_{n}(t)$ converges monotonically to $\nss$ as $n$ tends to infinity for every $t \in [0,T]$. Hence $g_{n}$ converges to the constant function $\nss$ uniformly with respect to $t \in [0,T]$.
Therefore,
for every $1\leq k \leq \nss$, $\psi_{k}^{n}(t) = \sum _{l=1}^{n}\langle \phi_{l}, \uni_{t} \phi_{k} \rangle \phi_{l}$ converges to $\uni_{t} \phi_{k}$ uniformly with respect to $t \in [0,T]$. In particular,  the matrix
$\left ( \langle \psi_{k}^{n}(t), \psi_{j}^{n}(t) \rangle \right)_{j,k=1}^{r} $
converges to $I_{r}$ uniformly with respect to $t \in [0,T]$.

For every $n$ large enough, the vectors $\psi_{1}^{n}(t),\ldots,\psi_{\nss}^{n}(t)$ are linearly independent and can be completed to a basis 
$\mathcal{B}^{n}(t) = (\psi_{1}^{n}(t),\ldots,\psi_{\nss}^{n}(t), \varphi_{\nss+1}^{n}(t),\ldots,  \varphi_{n}^{n}(t))$ of $\mathrm{span}\{\phi_{1},\ldots,\phi_{n}\}$ depending continuously on  $t\in [0,T]$.
Let $M^{n}(t)$ be the $n \times n$ matrix of the components of $\mathcal{B}^{n}(t)$ with respect to the basis 
$(\phi_{1},\ldots,\phi_{n})$. Denote by $\mathbf{F}^{n}(t)$ the matrix whose columns are the
 Gram--Schmidt transform of the columns of $M^{n}(t)$. Then, for every $n$ large enough, $F^{n}$ satisfies the statement of the proposition.
\end{proof}

We are now ready to prove Proposition~\ref{PRO_tracking_modulus}.

\begin{proof}[Proof of Proposition~\ref{PRO_tracking_modulus}]
Thanks to Proposition~\ref{PRO_reparam}, it is sufficient to prove
that, for every 
$\nss $ in $\mathbf{N}$, $\eps>0$, 
  and every
continuous curve $\uni:[0,T] \rightarrow {\mathbf U}(\CH)$,  there exist $T_u>0$, a continuous increasing bijection $s:[0,T]\rightarrow [0,T_u]$, and a piecewise constant control function 
$u:[0,T_u]\rightarrow [1/\delta,+\infty)$ such that
the propagator $\pro^{u}$ of system~\eqref{EQ_main_reparam} satisfies 
$$
\big |  |\langle \phi_{j}, \uni_t \phi \rangle | - |\langle  \phi_{j}, \pro^{u}_{s(t)}\phi\rangle | \big |< \eps\,,
$$ 
 for every  $\phi\in\mathrm{span}\{\phi_1,\dots,\phi_\nss\}$ with $\|\phi\|=1$
 and every 
 $t \in[0,T]$, $j \in \mathbf{N}$.

By Proposition~\ref{PRO_lemme_exist_Galerkin} there exist $n \geq \nss$ and
a continuous curve $\mathbf{F}^n:[0,T]\rightarrow SU(n)$ such that 
$ |\langle \phi_{j}, \uni_t \phi_k \rangle  - \langle \phi^{(n)}_{j},\mathbf{F}^n(t) \phi^{(n)}_k \rangle | <\eps/3$ for every $t$ in $[0,T]$, $1\leq k \leq \nss$, and $j\in \N$.

By Proposition~\ref{PRO_contr_dim_finie_Sigma}, 
there exists an admissible trajectory $x:[0,T_{\Sigma}] \to  SU(n)$ of system $({\Sigma}_{n})$ 
with initial condition $I_{n}$ satisfying
 $$
 \|  x(s(t))  - \mathbf{F}^n(t)\| < \frac\eps 3 \quad\mbox{ for every } t \in [0,T].
 $$

Finally, by Proposition~\ref{PRO_tracking_dim_inf}, there exists 
a piecewise constant function $u:[0,T_{\Sigma}]\rightarrow [1/\delta, +\infty)$ such that the propagator $\pro^{u}$ of~\eqref{EQ_main_reparam} satisfies 
$$
\big| | \langle \phi^{(n)}_{j} , x(t) \Pi_n\phi\rangle | -  | \langle \phi_j, \pro^{u}_{t} (\phi) \rangle |  
\big| <\frac \eps 3
$$
for every  $\phi\in\mathrm{span}\{\phi_1,\dots,\phi_\nss\}$ with $\|\phi\|=1$ and every $t \in [0,T_{\Sigma}]$,  $j\in \N$.
\end{proof}

\subsection{Estimates of the $L^1$ norm of the control}\label{sec:L1}

We derive now estimates of the minimal $L^1$ norm of the control $u$ whose existence is asserted in Proposition~\ref{PRO_tracking_modulus}. We focus here on the physically relevant transitions inducing permutations between eigenvectors of $A$.

The strategy to get $L^1$ estimates is the following. 
Recall that, instead of considering the control system $\dot{x}=(A+uB)x$ driven by a  piecewise continuous function $u:[0,T_u]\rightarrow [0,\delta]$, we have defined the function ${\cal P}(u):[0,\|u\|_{L^1}]\rightarrow [1/\delta,\infty)$ and considered the control system $\dot{x}=({\cal P}(u)A+B)x$. 
By Propositions~\ref{prop:4321} and \ref{PRO_reparam}, in order to estimate the $L^1$ norm of $u$, it is enough to estimate the time needed to transfer the system $\dot{x}=(uA+B)x$ from a given source to an $\eps$-neighborhood of a given target. We observe that the time needed to transfer $\dot{x}=(uA+B)x$ from one state to an $\eps$-neighborhood of another is smaller than or equal to the time needed 
to transfer system $(\Sigma_n)$ 
between the $n$-Galerkin approximations of the initial and the final condition for $n$ large enough. 

We proceed to the proofs of Proposition~\ref{prop:L1estimatessimple} and Theorem~\ref{PRO_L1_estimates}. 

\begin{proof}[Proof of Proposition~\ref{prop:L1estimatessimple}]
Let $S$ be a non-resonant chain of connectedness for $(A,B,[0,\delta],\Phi)$.  Choose $(j,k)$ in $S$ and let $m$ in $\mathbf{N}$ be such that $m\geq j ,k$. The solution $x:\mathbf{R}\rightarrow SU(m)$ of the Cauchy problem 
$$
\dot{x}= \nu   |b_{jk}|\left ( e_{jk}^{(m)}- e_{kj}^{(m)}  \right ) x,\quad\quad x(0)=I_m
$$
is a trajectory of $(\Sigma_m)$ and has the form $x(t)=\exp\left(t\nu   |b_{jk}|\left ( e_{jk}^{(m)}- e_{kj}^{(m)}  \right )\right)$. The matrix
 $M:=x \left ( \frac{\pi}{2 \nu |b_{jk}|}\right )$ satisfies $M \phi_l^{(m)} = \phi_l^{(m)}$ if $l\neq j,k$, $M \phi_j^{(m)}= -\phi_k^{(m)}$,  $M \phi_k^{(m)}= \phi_j^{(m)}$.
In other words, the control system $(\Sigma_m)$ can exchange (up to a phase factor) the eigenstates $j$ and $k$ of $A^{(m)}$, leaving all the others eigenstates invariant, in time 
$\frac{\pi}{2 \nu |b_{jk}|}$.
\end{proof}

\begin{proof}[Proof of Theorem~\ref{PRO_L1_estimates}]
 
Using Proposition~\ref{PRO_Galerkin_connected}  one may assume that $S$ is a $m$-connectedness chain for every $m \in \N$. 
 Let us prove by induction that every permutation of $\{1,\ldots,m\}$ is a product of at most $2^{m-1}-1$ transpositions of the form $(j~k)$ with  $(j,k)$ in $S \cap \{1,\ldots,m\}^{2}$. 
Let $h(n)$ be the minimal integer such that every permutation of $\{1,\ldots ,n\}$ is the product of at most
$h(n)$ transpositions of the  form $(j~k)$ with  $(j,k)$ in $S_m$. 
 For every permutation $\sigma$ of $\{1,\ldots ,n+1\}$, either $\sigma(n+1)=n+1$, and $\sigma$ is generated by at most $h(n)$ transpositions, or $\sigma(n+1) <n+1$. In this case, there exists $1 \leq k\leq n$ such that $(k,n+1) \in S$. Since the product $(k~n+1)(k~\sigma(n+1)) \sigma$ leaves $n+1$ invariant, it is a product of at most $h(n)$ permutations. As a conclusion, $h(n+1)\leq 2 h(n) +1$ and since $h(2)=1$, we find $h(m)\leq 2^{m-1}-1$.

The time needed for each of the transpositions $(j~k)$ with $(j,k)$ in $S$ has been computed in Proposition~\ref{prop:L1estimatessimple}. The conclusion follows from the estimate $\nu > 2/5$ proved in Remark~\ref{REM_valeur_nu}.
 \end{proof}

\begin{rem}
 The bound given in Theorem~\ref{PRO_L1_estimates} does not depend on $\eps$. However, it is possible that the time $T_u$ needed to achieve the transfer of system~\eqref{eq:main} grows to infinity as $\eps$ tends to zero.
\end{rem}

\begin{rem}
 Theorem~\ref{PRO_L1_estimates} could be stated in a more general way. Indeed the result~\cite[Theorem~6.2]{sachkov} gives the existence of a uniform bound on the time needed to steer system~$(\Sigma_{n})$ from any linear combination of the first $n$ eigenstates to any other. This fact guarantees the existence of a uniform bound for the $L^{1}$-norm of a control steering system~\eqref{eq:main}  from any linear combination of the first $n$ eigenstates to any neighborhood of any other unitarily equivalent linear combination of the first $n$ eigenstates.
 Such time estimates have been given explicitly in the case $S=\mathbf{N}^2$ in~\cite[Section 5]{agrachev-chambrion}.  This result could be generalized   to the case under consideration. It is, however, rather technical and involves advanced notions of Lie group theory. 
\end{rem}

Following the method of \cite[Section 4.5]{Schrod}, one can also give a lower bound for the $L^1$ norm of the control.

\begin{prop}\label{PRO_L1_lower_estimates}
Let $(A,B,U,\Phi)$ satisfy $(\ass)$.
For every $\uni$ in $\mathbf{U}(\cal H)$, $m$ in $\mathbf{N}$, $\eps>0$, $\theta_{1},\ldots,\theta_{n} \in \R$, and every piecewise constant function $u:[0,T_u]\rightarrow U$ such that the propagator $\pro^{u}$ of \eqref{eq:main} satisfies $\|e^{i \theta_k} {\uni}(\phi_k) - \pro^{u}_{T_u}(\phi_k)\|\leq \eps$ for $1 \leq k \leq m$, one has
$$
\|u\|_{L^1} \geq \sup_{1\leq k \leq m} \sup_{j \in \mathbf{N}} \frac{\big  \langle \phi_j, \phi_k \rangle - |\langle \phi_j,\uni \phi_k\rangle |\big|-\eps}{\| B \phi_j\|}.
$$
\end{prop}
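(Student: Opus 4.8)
The plan is to estimate $\|u\|_{L^1}$ from below by comparing the modulus of the relevant scalar product at time $T_u$ with its value at time $0$, exploiting the fact that the rate of change of $\langle \phi_j,\psi(t)\rangle$ is controlled by $u(t)$ alone. Fix $1\le k\le m$ and $j\in\N$, set $\psi(t)=\pro^u_t(\phi_k)$, and consider the absolutely continuous function $t\mapsto \langle\phi_j,\psi(t)\rangle$. From~\eqref{EQ_very_weak} we have, for almost every $t$,
$$
\frac{d}{dt}\langle\phi_j,\psi(t)\rangle = -\langle (A+u(t)B)\phi_j,\psi(t)\rangle = -\langle A\phi_j,\psi(t)\rangle - u(t)\langle B\phi_j,\psi(t)\rangle.
$$
Since $A\phi_j = i\lambda_j\phi_j$, the first term equals $-i\lambda_j\langle\phi_j,\psi(t)\rangle$, which is purely a phase rotation and does not change $|\langle\phi_j,\psi(t)\rangle|$. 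Concretely, I would pass to the function $\beta(t) := e^{i\lambda_j t}\langle\phi_j,\psi(t)\rangle$, whose modulus equals $|\langle\phi_j,\psi(t)\rangle|$ and which satisfies $\dot\beta(t) = -u(t)e^{i\lambda_j t}\langle B\phi_j,\psi(t)\rangle$ for a.e.\ $t$.

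Next I would bound $|\dot\beta(t)|\le |u(t)|\,|\langle B\phi_j,\psi(t)\rangle|\le |u(t)|\,\|B\phi_j\|$, using Cauchy--Schwarz and $\|\psi(t)\|=1$ (the propagator is unitary). Integrating over $[0,T_u]$ gives
$$
\big|\,|\langle\phi_j,\psi(T_u)\rangle| - |\langle\phi_j,\psi(0)\rangle|\,\big| \le |\beta(T_u)-\beta(0)| \le \int_0^{T_u}|\dot\beta(t)|\,dt \le \|B\phi_j\|\,\|u\|_{L^1}.
$$
Here $\psi(0)=\phi_k$, so $|\langle\phi_j,\psi(0)\rangle| = |\langle\phi_j,\phi_k\rangle|$. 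For the final state, the hypothesis $\|e^{i\theta_k}\uni\phi_k - \pro^u_{T_u}(\phi_k)\|\le\eps$ gives $\big|\,|\langle\phi_j,\psi(T_u)\rangle| - |\langle\phi_j,e^{i\theta_k}\uni\phi_k\rangle|\,\big|\le\eps$ by the reverse triangle inequality, and $|\langle\phi_j,e^{i\theta_k}\uni\phi_k\rangle| = |\langle\phi_j,\uni\phi_k\rangle|$ since the phase factor drops out of the modulus. Combining, $|\langle\phi_j,\psi(T_u)\rangle| \ge |\langle\phi_j,\uni\phi_k\rangle| - \eps$ is not quite what is wanted; rather one chains the three estimates to get
$$
\big|\,|\langle\phi_j,\phi_k\rangle| - |\langle\phi_j,\uni\phi_k\rangle|\,\big| - \eps \le \|B\phi_j\|\,\|u\|_{L^1},
$$
and taking the supremum over $j\in\N$ and $1\le k\le m$ yields the claimed bound.

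There is essentially no serious obstacle here: the argument is a direct differential inequality combined with the reverse triangle inequality, and the only point requiring a little care is the justification that $\beta$ is absolutely continuous with the stated derivative, which follows from~\eqref{EQ_very_weak} (valid for piecewise constant controls by Definition~\ref{DEF_Sol_skew_adjoint_bilinear_system}) together with $\phi_j\in D(A)$ and $\phi_j\in D(B)$ from $(\ass2)$. One should also note that $\|B\phi_j\|$ may be infinite for some $j$ if $B$ is unbounded, in which case the corresponding term in the supremum contributes nothing, so the inequality holds trivially for that index; the interesting content is for those $j$ with $\phi_j\in D(B)$, which is all of them by $(\ass2)$, so in fact $\|B\phi_j\|<\infty$ throughout and the bound is always meaningful. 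The statement in the excerpt writes $\langle\phi_j,\phi_k\rangle$ rather than $|\langle\phi_j,\phi_k\rangle|$ inside the absolute value, but since $(\phi_k)$ is an orthonormal basis this quantity is $\delta_{jk}\in\{0,1\}$, hence equals its own modulus, so the two formulations agree.
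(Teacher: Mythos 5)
Your proof is correct. The paper itself gives no proof of this proposition; after stating it the authors merely remark that it follows ``the method of [Section~4.5]{Schrod}'', so there is no proof in the paper to compare against line by line. The differential-inequality approach you use --- pass to a phase-corrected coordinate $\beta(t)$ whose derivative is controlled by $u(t)\,\langle B\phi_j,\psi(t)\rangle$ alone, integrate to get $\big|\,|\langle\phi_j,\psi(T_u)\rangle|-|\langle\phi_j,\phi_k\rangle|\,\big|\le\|B\phi_j\|\,\|u\|_{L^1}$, and then absorb the endpoint discrepancy $\eps$ via the reverse triangle inequality --- is exactly the kind of argument the cited reference carries out, and it does establish the claimed bound. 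Your appeal to~\eqref{EQ_very_weak} and to $\phi_j\in D(A)\cap D(B)$ to justify that $t\mapsto\langle\phi_j,\psi(t)\rangle$ is (piecewise $C^1$ hence) absolutely continuous is also the right justification.

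Two small remarks on bookkeeping, neither affecting correctness. With the paper's convention that $\langle\cdot,\cdot\rangle$ is conjugate-linear in the \emph{first} argument (see Section~\ref{sec:Example}), one has $-\langle A\phi_j,\psi\rangle=-\overline{i\lambda_j}\,\langle\phi_j,\psi\rangle=+i\lambda_j\langle\phi_j,\psi\rangle$, not $-i\lambda_j\langle\phi_j,\psi\rangle$; so the phase-correction should be $\beta(t)=e^{-i\lambda_j t}\langle\phi_j,\psi(t)\rangle$. This changes nothing in the bound $|\dot\beta(t)|\le |u(t)|\,\|B\phi_j\|$. Also, if $B\phi_j=0$ the same computation shows $\beta$ is constant, and then the numerator is $\le 0$, so the apparent division by zero is harmless: that index contributes nothing to the supremum. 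Your observation that the displayed numerator should be read as $\big|\,\langle\phi_j,\phi_k\rangle-|\langle\phi_j,\uni\phi_k\rangle|\,\big|$ and that $\langle\phi_j,\phi_k\rangle$ is $0$ or $1$ (hence equals its own modulus) is correct.
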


Notice that while some strong assumptions about the existence of connectedness chains are needed in Theorem~\ref{PRO_L1_estimates}, Proposition~\ref{PRO_L1_lower_estimates} is valid even if~\eqref{eq:main} is not approximately controllable.

\section{Phase tuning}
\label{sec:phase}

Based on  
Proposition~\ref{PRO_tracking_modulus}, 
we shall now 
complete the proof of Theorem~\ref{THE_Control_wave_function}
proving  approximate simultaneous controllability (see Proposition~\ref{prop:phasetuning} below).

In order to outline the mechanism of the proof, we treat in a first time the case of a single wave function (proving directly the first part of Corollary~\ref{THE_Control_density_matrices}) and we then turn, in Section~\ref{phases-2}, to the general case. 

\subsection{Phase tuning for the control of a single wave function}\label{phases-1}

Simultaneous controllability is obtained from Proposition~\ref{PRO_tracking_modulus} applied both to \eqref{eq:main} and to its time-reversed version. 
If $(A,B,[0,\delta],\Phi)$
satisfies $(\mathfrak{A})$ and admits a non-resonant connectedness chain, then the same is true for $(-A,-B,[0,\delta],\Phi)$. 
{Notice, moreover, that, by unitarity of the evolution of the \Sch\ equation, 
if 
$u:[0,T]\to [0,\delta]$ steers $\psi_0$ to a $\eps$-neighborhood of
$\psi_1$ for the time-reversed control system
\begin{equation}\label{sch-tr}
\frac{d\psi}{dt}(t)=-(A+u(t)B) \psi(t), \quad u(t) \in [0,\delta],
\end{equation}
 then 
$u(T-\cdot):[0,T]\to [0,\delta]$
steers 
$\psi_1$ $\eps$-close to $\psi_0$ for the original system \eqref{eq:main}.

Take any eigenvector $\phi_{\bar k}$ such that $\lambda_{\bar k}\ne 0$ (its existence clearly follows from the existence of a non-resonant connectedness chain) 
and consider the control $u:[0,T]\to [0,\delta]$ steering $\psi_0$ to a $\eps$-neighborhood of 
$e^{i\theta}\phi_{\bar k}$ for some $\theta\in [0,2\pi)$.
The existence of such a $u$ follows from Proposition~\ref{PRO_tracking_modulus}, with  
$\uni$ any continuous curve in $\mathbf{U}(\CH)$ 
from the identity to a unitary operator sending $\psi_0$ into $\phi_{\bar k}$ and $\nss$ sufficiently large.

Similarly, there exist $\tilde u:[0,\tilde T]\to [0,\delta]$ and $\tilde \theta\in [0,2\pi)$ such that $\tilde u$ steers 
$\psi_1$ 
 $\eps$-close to $e^{i \tilde \theta}\phi_{\bar k}$ for \eqref{sch-tr}. 
Let 
$\tau>0$ be such that 
$$e^{\tau A}(e^{i \theta}\phi_{\bar k})=e^{i \tilde \theta}{\phi_{\bar k}}.$$
Hence, the concatenation of $u$, of the control constantly equal to zero for a time $\tau$, and of
 $\tilde u(\tilde T-\cdot)$, steers $\psi_0$ $2\eps$-close to $\psi_1$.

\subsection{Phase tuning for simultaneous control}\label{phases-2}
Let $\ns$ be the number of equations that  we would like to control simultaneously, as in Definition~\ref{DEF_simultaneous_controllability}.
The scheme of the argument is similar to the one above. The pivotal role of the orbit of $\{e^{t A}\phi_{\bar k}\mid t\}$  is now played by a torus of dimension $\ns$. 

The crucial point is to ensure that an orbit of $A$ ``fills'' the torus densely enough. 
This is formally stated in the proposition below.
Recall that a subset $\Omega_1$ is $\eps$-dense in a metric space $\Omega_2$ if any point of $\Omega_2$ is at distance smaller than $\eps$ from every point of $\Omega_1$. 
For every $m\in\N$ and $k_1,\dots,k_m\in\N$, define
\begin{align*}
{\cal T}({k_1},\ldots,{k_m})&=\{e^{\theta_1 A}\phi_{k_1}+ \cdots +e^{\theta_m A}\phi_{k_m}\mid \theta_1,\ldots,\theta_m\in\R\}\nonumber\\
{\cal C}({k_1},\ldots,{k_m})&=\{e^{t A}\phi_{k_1}+ \cdots +e^{t A}\phi_{k_m}\mid t\in\R\}.\nonumber
\end{align*}
Notice that, if $k_1,\dots,k_m$ are distinct,  
${\cal T}({k_1}\ldots,{k_m})$
is diffeomorphic to the torus $\mathbb{T}^m$.

\begin{prop}\label{prop:phasetuning}
Let 
$(A,B,[0,\delta],\Phi)$
satisfy $(\mathfrak{A})$ and admit a non-resonant connectedness chain.
Assume 
that, for every $\eta>0$ and $\ns$ in $\mathbf{N}$, 
there exist $\ns$ pairwise distinct positive integers $\bar k_1,\dots,\bar k_\ns$ such that 
$
{\cal C}({\bar k_1},\ldots,{\bar k_\ns})$ is $\eta$-dense in ${\cal T}({\bar k_1},\ldots,{\bar k_\ns})$.
Then~\eqref{eq:main} is simultaneously approximately controllable. 
\end{prop}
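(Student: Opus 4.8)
The goal is to combine the modulus-tracking result (Proposition~\ref{PRO_tracking_modulus}) with a phase-adjustment argument in the spirit of Section~\ref{phases-1}, but now for $\ns$ wave functions simultaneously. Fix $\psi_1,\dots,\psi_\ns$ in $\CH$, $\uni\in\mathbf{U}(\CH)$, and $\eps>0$; without loss of generality the $\psi_k$ are orthonormal. First I would choose $\nss$ large so that both $\psi_1,\dots,\psi_\ns$ and $\uni\psi_1,\dots,\uni\psi_\ns$ are, up to $\eps$, supported on $\spann\{\phi_1,\dots,\phi_\nss\}$. The strategy is: steer each $\psi_k$ close to a point on a common torus $\mathcal{T}(\bar k_1,\dots,\bar k_\ns)$ using modulus tracking applied to \eqref{eq:main}; steer each $\uni\psi_k$ close to a point on the same torus using modulus tracking applied to the time-reversed system \eqref{sch-tr}; and then use a free evolution $e^{\tau A}$, together with the $\eta$-density hypothesis, to match the two families of phases on the torus simultaneously.

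\textbf{Key steps.} (1) Using Proposition~\ref{PRO_tracking_modulus} applied to \eqref{eq:main} with a curve $\uni^\flat$ in $\mathbf{U}(\CH)$ sending $\psi_k$ to $\phi_{\bar k_k}$, produce a control $u^\flat$ steering each $\psi_k$ to within $\eps$ of $e^{i\alpha_k}\phi_{\bar k_k}$ for suitable phases $\alpha_k$; here the point is that modulus tracking controls $|\langle\phi_j,\cdot\rangle|$ for all $j$ simultaneously and for all unit vectors in $\spann\{\phi_1,\dots,\phi_\nss\}$, hence simultaneously for $\psi_1,\dots,\psi_\ns$. So after $u^\flat$ the state of the $k$-th copy is $\eps$-close to $e^{i\alpha_k}\phi_{\bar k_k}$, i.e.\ to the point $e^{\theta A}\phi_{\bar k_1}+\cdots$ evaluated componentwise — more precisely the tuple $(e^{i\alpha_k}\phi_{\bar k_k})_k$ lies on $\mathcal{T}(\bar k_1,\dots,\bar k_\ns)$ viewed as the set of phase-tuples. (2) Symmetrically, by Proposition~\ref{PRO_tracking_modulus} applied to \eqref{sch-tr} (which also satisfies $(\ass)$ with a non-resonant connectedness chain, since $(-A,-B,[0,\delta],\Phi)$ does), produce $\tilde u$ steering each $\uni\psi_k$ to within $\eps$ of $e^{i\tilde\alpha_k}\phi_{\bar k_k}$; reversing time, $\tilde u(\tilde T-\cdot)$ steers $e^{i\tilde\alpha_k}\phi_{\bar k_k}$ back $\eps$-close to $\uni\psi_k$. (3) The remaining gap is a phase mismatch: we have the $\ns$-tuple of phases $(\alpha_k)_k$ on one side and $(\tilde\alpha_k)_k$ on the other, and we must realize the difference by a single free evolution $e^{\tau A}$, which acts on $\phi_{\bar k_k}$ as multiplication by $e^{i\lambda_{\bar k_k}\tau}$. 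The orbit $\{(e^{i\lambda_{\bar k_1}\tau},\dots,e^{i\lambda_{\bar k_\ns}\tau})\mid\tau\in\R\}$ is exactly $\mathcal{C}(\bar k_1,\dots,\bar k_\ns)$ read as a subset of the torus, and the hypothesis says it is $\eta$-dense in $\mathcal{T}(\bar k_1,\dots,\bar k_\ns)\cong\mathbb{T}^\ns$; so for any prescribed target phase-tuple there is $\tau$ with $|e^{i\lambda_{\bar k_k}\tau}e^{i\alpha_k}-e^{i\tilde\alpha_k}|<\eta$ for all $k$ simultaneously. Choosing $\eta$ small and concatenating $u^\flat$, the zero control for time $\tau$, and $\tilde u(\tilde T-\cdot)$ yields a control steering each $\psi_k$ to within $C\eps$ of $\uni\psi_k$ for a universal constant $C$; replacing $\eps$ by $\eps/C$ at the outset finishes the argument.

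\textbf{Main obstacle.} The delicate point is step~(3): one needs the \emph{same} $\tau$ to work for all $\ns$ components at once, which is precisely why the hypothesis is phrased as $\eta$-density of the one-parameter curve $\mathcal{C}(\bar k_1,\dots,\bar k_\ns)$ in the full torus $\mathcal{T}(\bar k_1,\dots,\bar k_\ns)$ rather than as mere surjectivity of each coordinate. One must also be careful that the intermediate target $e^{i\alpha_k}\phi_{\bar k_k}$ is genuinely reached only up to modulus, not up to phase — but since $\phi_{\bar k_k}$ is a single eigenvector, "modulus $\eps$-close to a unit multiple of $\phi_{\bar k_k}$'' forces the state to be $\eps'$-close to \emph{some} unit multiple $e^{i\alpha_k}\phi_{\bar k_k}$, so the phase $\alpha_k$ is well-defined (up to $O(\eps)$) and that is all we need to feed into the density argument. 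A minor additional bookkeeping point is propagating the various $\eps/3$-type estimates through the three concatenated pieces and through the unitary (hence isometric) free evolution, which is routine. Finally, one should note that the pairwise distinctness of $\bar k_1,\dots,\bar k_\ns$ guaranteed by the hypothesis is what makes $\mathcal{T}(\bar k_1,\dots,\bar k_\ns)$ an honest $\ns$-torus on which the density statement is meaningful; this is used to set up the correspondence between states and phase-tuples cleanly.
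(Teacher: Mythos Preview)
Your proposal is correct and follows essentially the same route as the paper's proof: apply Proposition~\ref{PRO_tracking_modulus} to \eqref{eq:main} to send each $\psi_k$ near $e^{i\alpha_k}\phi_{\bar k_k}$, apply it to the time-reversed system \eqref{sch-tr} to send each $\uni\psi_k$ near $e^{i\tilde\alpha_k}\phi_{\bar k_k}$, bridge the phase gap with a single free evolution $e^{\tau A}$ using the $\eta$-density hypothesis, and concatenate. The paper's write-up is terser (it phrases the intermediate targets as $e^{\theta_j A}\phi_{\bar k_j}$ rather than $e^{i\alpha_j}\phi_{\bar k_j}$, and absorbs your preliminary truncation to $\spann\{\phi_1,\dots,\phi_\nss\}$ into the words ``$\nss$ sufficiently large''), but the structure and the key ideas are identical.
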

\begin{proof}
Take $\ns$ orthonormal initial conditions $\psi_0^1,\ldots,\psi_0^\ns$ and $\ns$ orthonormal final conditions  $\psi_1^1,\ldots,\psi_1^\ns$. 
Fix 
a tolerance $\eta>0$. 
Take $\bar k_1,\dots,\bar k_\ns$ as in the statement of the proposition. 

According to Proposition~\ref{PRO_tracking_modulus} (with $\nss$ sufficiently large and 
$\uni$ a continuous curve in $\mathbf{U}(\CH)$ 
from the identity to a unitary operator sending $\psi_0^j$ to $\phi_{\bar k_j}$ for each $j=1,\dots,\ns$), there exists a control $u(\cdot)$
 steering simultaneously each $\psi_0^j$, for  $j=1,\ldots,\ns$,    $\eta$-close to 
 $e^{\theta_jA}\phi_{\bar k_j}$ for some $\theta_1,\dots,\theta_\ns\in\R$.
 Similarly, 
 applying Proposition~\ref{PRO_tracking_modulus} to the triple $(-A,-B, [0,\delta],\Phi)$, 
 there exists a control  $\tilde u:[0,\tilde T]\to [0,\delta]$
 steering simultaneously  $\psi_1^j$
   $\eta$-close to $e^{\tilde \theta_jA}\phi_{\bar k_j}$ for \eqref{sch-tr},   for $j=1,\ldots,\ns$ and for some $\tilde \theta_1,\dots,\tilde \theta_\ns\in\R$.

Since the positive orbit of $A$ passing through $\sum_{j=1}^\ns e^{\theta_jA}\phi_{\bar k_j}$ is $\eta$-close to $\sum_{j=1}^\ns e^{\tilde \theta_jA}\phi_{\bar k_j}\in{\cal T}({\bar k_1},\ldots,{\bar k_\ns})$, then the concatenation of $u(\cdot)$, a control constantly equal to zero on a time interval of suitable length, and $\tilde u(\tilde T-\cdot)$ steers each $\psi_0^j$ $3\eta$-close to 
 $\psi_1^j$ for
 $j=1,\ldots,\ns$. 
\end{proof}

\begin{rem}
As it follows from the proof above, in Proposition~\ref{prop:phasetuning}  the hypothesis of existence of a non-resonant chain of connectedness can be replaced by the weaker hypothesis that both~\eqref{eq:main} and~\eqref{sch-tr} are simultaneously controllable up to phases in the sense of Proposition~\ref{PRO_tracking_modulus}.
\end{rem}

We are left to prove the following. 
\begin{lem}\label{lem:1234}
If $A$ has infinitely many distinct eigenvalues then
for every $\eta>0$ and $\ns \in \mathbf{N}$, 
there exist $\ns$ distinct positive integers $\bar k_1,\dots,\bar k_\ns$ such that 
${\cal C}({\bar k_1},\ldots,{\bar k_\ns})$ is $\eta$-dense in ${\cal T}({\bar k_1},\ldots,{\bar k_\ns})$. 
\end{lem}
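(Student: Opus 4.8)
The plan is to reduce the statement to a quantitative equidistribution question for a linear flow on a torus, and then to use the infinitude of the spectrum to make the right choice of eigenvalues. Recall that $e^{\theta A}\phi_k=e^{i\lambda_k\theta}\phi_k$. Hence, identifying the subspace $\mathrm{span}\{\phi_{\bar k_1},\dots,\phi_{\bar k_\ns}\}$ with $\C^\ns$ via the orthonormal vectors $\phi_{\bar k_j}$, the set ${\cal T}(\bar k_1,\dots,\bar k_\ns)$ becomes (isometrically, once we agree that at most one $\lambda_{\bar k_j}$ is zero and we discard that case) the torus $(\mathbb{S}^1)^\ns$, while ${\cal C}(\bar k_1,\dots,\bar k_\ns)$ becomes the orbit $\{(e^{it\lambda_{\bar k_1}},\dots,e^{it\lambda_{\bar k_\ns}})\mid t\in\R\}$. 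The flat metric on $(\mathbb{S}^1)^\ns=\R^\ns/2\pi\Z^\ns$ and the $\ell^2$-metric inherited from $\CH$ are bi-Lipschitz with constants depending only on $\ns$, so it is enough to find $\ns$ distinct indices with $\lambda_{\bar k_j}\neq 0$ such that, setting $\mu=(\lambda_{\bar k_1},\dots,\lambda_{\bar k_\ns})$, the geodesic $\Gamma(\mu)=\{t\mu\bmod 2\pi\Z^\ns\mid t\in\R\}$ is $\eta$-dense in the torus (after harmlessly shrinking $\eta$ by a fixed $\ns$-dependent factor; and we may assume $\eta$ small, the large-$\eta$ case being trivial).

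The conceptual point — and what replaces the unavailable hypothesis that the $\lambda_{\bar k_j}$ be $\Q$-linearly independent — is that a \emph{closed} geodesic that winds fast enough is already $\eta$-dense. So first I would prove the quantitative Kronecker-type statement: there is $M_0=M_0(\ns)$ such that, if there is no $n\in\Z^\ns$ with $0<\|n\|_\infty\leq M_0/\eta$ and $\langle n,\mu\rangle=0$, then $\Gamma(\mu)$ is $\eta$-dense in $\R^\ns/2\pi\Z^\ns$. The argument I have in mind is Fourier-analytic: if $\Gamma(\mu)$ missed the $\eta$-ball around some $\alpha$, I would test against $f(x)=c\,(\eta)^{-\ns}\phi\big(c(x-\alpha)/\eta\big)$, with $\phi$ a fixed nonnegative smooth bump of unit mass on $\R^\ns$, so that $f\geq 0$ is supported in that ball, $\int_{\R^\ns/2\pi\Z^\ns}f=1$, and $\hat f(n)=(2\pi)^{-\ns}e^{-i\langle n,\alpha\rangle}\hat\phi(c\eta n)$ with $\hat\phi$ Schwartz and $\hat\phi(0)=1$. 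Since $f(t\mu)\equiv 0$ we get $\frac1T\int_0^T f(t\mu)\,dt=0$ for every $T$; expanding $f$ in Fourier series and letting $T\to\infty$ (using $\frac1T\int_0^T e^{i\langle n,\mu\rangle t}dt\to\mathbb 1[\langle n,\mu\rangle=0]$ and dominated convergence) yields $\sum_{\langle n,\mu\rangle=0}\hat f(n)=0$, whence $(2\pi)^{-\ns}=|\hat f(0)|\leq\sum_{n\neq 0,\ \langle n,\mu\rangle=0}|\hat\phi(c\eta n)|\,(2\pi)^{-\ns}$. Under the no-small-relation hypothesis the lattice $\{n\in\Z^\ns\mid\langle n,\mu\rangle=0\}$ (of rank at most $\ns-1$) has all nonzero vectors of norm $>M_0/\eta$, so a routine lattice-point count together with the rapid decay of $\hat\phi$ bounds the last sum by $C(\ns)/M_0<1$ once $M_0$ is large, a contradiction. (One may instead invoke a quantitative Weyl equidistribution / discrepancy bound applied to the exponential sums $\frac1T\int_0^T e^{i\langle n,\mu\rangle t}dt$, obtaining the same conclusion with a worse $M_0$, which is equally acceptable.)

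With that in hand the choice of indices is a soft greedy construction. Put $M=M_0(\ns)/\eta$. I would pick $\bar k_1,\dots,\bar k_\ns$ recursively so that at every stage $\mu_j:=\lambda_{\bar k_j}$ is nonzero, the $\mu_j$ are pairwise distinct, and no relation $\sum_i n_i\mu_i=0$ holds with $n\in\Z^{j}\setminus\{0\}$ and $\|n\|_\infty\leq M$ (vacuous for $j=0$; for $j=1$ just take $\mu_1$ any nonzero eigenvalue). At step $j+1$, a candidate nonzero eigenvalue $\mu_{j+1}\notin\{\mu_1,\dots,\mu_j\}$ is forbidden only if $\mu_{j+1}=-n_{j+1}^{-1}\sum_{i\leq j}n_i\mu_i$ for some integers with $\|n\|_\infty\leq M$ and $n_{j+1}\neq 0$ (a relation with $n_{j+1}=0$ would already be excluded among $\mu_1,\dots,\mu_j$); there are only finitely many such tuples, hence only finitely many forbidden values, so — because $A$ has infinitely many distinct eigenvalues — an admissible $\mu_{j+1}$ exists. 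After $\ns$ steps, the quantitative statement applies to $\mu=(\mu_1,\dots,\mu_\ns)$ and, undoing the reductions of the first paragraph, shows that ${\cal C}(\bar k_1,\dots,\bar k_\ns)$ is $\eta$-dense in ${\cal T}(\bar k_1,\dots,\bar k_\ns)$.

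The main obstacle is the quantitative equidistribution statement of the second paragraph; the rest is bookkeeping and an elementary greedy argument. The subtlety to keep in mind is precisely not to attempt exact density (i.e. $\Q$-independence of the selected eigenvalues), which is in general impossible — for instance when the whole spectrum lies on a single rational line — and instead to settle for $\eta$-density through the fast-winding mechanism, which only requires the absence of integer relations with coefficients bounded by $O_\ns(1/\eta)$.
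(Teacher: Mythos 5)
Your proof is correct, but it follows a genuinely different route from the paper's. The paper proves the lemma by a four-way case analysis on the arithmetic of the spectrum: (a) if $\mathrm{dim}_\Q\spann_\Q(\lambda_k)=\infty$ it picks $\Q$-independent eigenvalues and gets genuine density; (b) if the spectrum is unbounded it picks $|\lambda_{\bar k_m}|\gg|\lambda_{\bar k_{m-1}}|$ recursively so each new coordinate winds fast relative to the previous quasi-period; (c) if the spectrum is bounded with $\Q$-dimension one it reduces to rational eigenvalues, where the orbit is closed, and uses a gcd lemma on the periods $\tau(\bar k_1,\dots,\bar k_m)$ to show the closed orbit meets each fiber of the torus in a sufficiently dense set; (d) the remaining case is patched together from (b) and (c). You instead isolate a single quantitative Kronecker--Weyl statement --- absence of integer relations $\langle n,\mu\rangle=0$ with $0<\|n\|_\infty\le M_0(\ns)/\eta$ forces the orbit closure (a rational subtorus) to be $\eta$-dense --- and then obtain the required eigenvalues by a greedy argument that at each step excludes only the finitely many values creating a small relation, which the infinitude of distinct eigenvalues allows you to avoid. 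Your key lemma is correct: the orbit closure is the subtorus annihilated by $\Lambda=\{n\in\Z^\ns:\langle n,\mu\rangle=0\}$, and either your Fourier/bump-function computation (the lattice-point count for $\Lambda$, whose nonzero vectors all have norm $>M_0/\eta$, does bound the tail sum by $C(\ns)M_0^{-N}$) or a standard transference bound between the covering radius of the projected lattice and the first minimum of $\Lambda$ gives the claim. What your approach buys is uniformity --- no case split, and both of the paper's "fast winding'' mechanisms (cases (b) and (c)) become instances of "no small integer relation''; what the paper's approach buys is elementarity, as it needs no Fourier analysis or lattice geometry, at the price of the case analysis and the somewhat delicate gcd bookkeeping in the rational case. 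Both arguments use the hypothesis of infinitely many distinct eigenvalues in the same essential way, namely to keep selecting fresh eigenvalues at each stage.
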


We split the proof of Lemma~\ref{lem:1234} in four cases.  

\subsubsection{$\mathrm{dim}_\Q\left(\mathrm{span}_\Q(\lambda_k)_{k\in\N}\right)=\infty$}
This case is trivial: it is enough to take $\bar k_1,\dots,\bar k_\ns$ in such a way that $\lambda_{\bar k_1},\dots,\lambda_{\bar k_\ns}$ are $\Q$-linearly independent. Then  
${\cal C}({\bar k_1},\ldots,{\bar k_\ns})$ is dense (hence, $\eta$-dense for every $\eta>0$) in ${\cal T}({\bar k_1},\ldots,{\bar k_\ns})$.

\subsubsection{The spectrum of $A$ is unbounded}\label{sss-unbounded}
The most physically relevant case is the one in which the sequence $(\lambda_k)_{k\in\N}$ is unbounded.

Fix $\bar k_1$ such that $\lambda_{\bar k_1}\neq0$.  Take then $\bar k_2$ such that $|\lambda_{\bar k_2}|\gg|\lambda_{\bar k_1}|$ in such a way that the orbit 
${\cal C}({\bar k_1},{\bar k_2})$ is $\eta$-dense in  ${\cal T}({\bar k_1},{\bar k_2})$.
By recurrence, taking  $|\lambda_{\bar k_m}|\gg|\lambda_{\bar k_{m-1}}|$ for $m=2,\ldots,\ns$ we have that ${\cal C}({\bar k_1},\dots,{\bar k_m})$ is $\eta$-dense in  ${\cal T}({\bar k_1},\ldots,{\bar k_m})$. Indeed, by the recurrence hypothesis,
${\cal C}({\bar k_1},\dots,{\bar k_{m-1}})+ {\cal T}({\bar k_m})$ is $\eta$-dense in ${\cal T}({\bar k_1},\ldots,{\bar k_m})={\cal T}({\bar k_1},\dots,{\bar k_{m-1}})+ {\cal T}({\bar k_m})$ and the choice of $\lambda_{\bar k_m}$ is such that ${\cal C}({\bar k_1},\dots,{\bar k_m})$
is $\eta'$-dense in ${\cal C}({\bar k_1},\dots,{\bar k_{m-1}})+ {\cal T}({\bar k_m})$ for $\eta'$ arbitrarily small.

\subsubsection{The spectrum of $A$ is  bounded and $\mathrm{dim}_\Q\left(\mathrm{span}_\Q(\lambda_k)_{k\in\N}\right)=1$}\label{sss-oneQ}

The existence of a connectedness chain implies that there exist infinitely many pairwise distinct gaps between eigenvalues of $A$. Hence, the set of eigenvalues of $A$ has infinite cardinality. 

Since all the eigenvalues of $A$ are $\Q$-linearly dependent, 
we may assume, without loss of generality, that $\lambda_j$ is rational 
for every $j\in \N$. 
Up to removing the eigenvalues equal to zero, if they exist, and changing the sign of some eigenvalues 
we can also assume that 
$$\lambda_j=\frac{a_j}{b_j},\quad \gcd(a_j,b_j)=1,\quad a_j, b_j>0$$
for every $j\in \N$. The boundedness and the infinite cardinality of the spectrum of $A$ imply that the sequence $(b_j)_{j\in\N}$ is unbounded.

In order to prove Lemma~\ref{lem:1234}, let us make some preliminary considerations. 
For every $m\in \N$ and $k_1,\dots,k_m\in \N$, 
denote by $\tau(k_1,\dots,k_m)$ the 
minimum of all $t>0$ such that $t\lambda_{k_j}$ belongs to $\N$ for every $j=1,\dots,m$. 
Equivalently said, $2\pi\tau({k_1},\dots,{k_m})$ is the period of the curve $s\mapsto e^{s A}\phi_{k_1}+\cdots+ e^{s A}\phi_{ k_m}$.
Notice that, if $\lambda_{k_1},\dots,\lambda_{k_m}$ are integers, then $\tau(k_1,\dots,k_m)=1/{\gcd(a_{k_{1}},\dots,a_{k_{m}})}$.
In general, 
\be\label{taum}
\tau({k_1},\dots,{k_m})=\frac{b_{k_1}\cdots b_{k_m}}{\gcd\lp a_{k_l}\Pi_{j=1}^{l-1}b_{k_j} \Pi_{j=l+1}^{m}b_{k_j}\rp_{1\leq l\leq m}}.
\ee

The following lemma guarantees that, for any choice of $k_1,\dots,k_{m-1}$, we can select $k_m$ in such a way that $\tau({k_1},\dots,{k_m})\gg \tau({k_1},\dots,{k_{m-1}})$. 

\begin{lemma}\label{5.2}
Let $m\in \N$.
For every $k_1,\dots,k_{m}\in \N$, there exists 
$c=c(k_1,\dots,k_{m})>0$ such that, for every $k_{m+1}\in\N$, 
$$\frac{\tau({k_1},\dots,{k_{m+1}})}{\tau({k_1},\dots,{k_{m}})}\geq \frac{b_{k_{m+1}}}{c}.$$ 
\end{lemma}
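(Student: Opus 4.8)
\textbf{Proof plan for Lemma~\ref{5.2}.}
The plan is to read off the claimed lower bound directly from the closed-form expression~\eqref{taum} for $\tau$. First I would write both $\tau({k_1},\dots,{k_{m+1}})$ and $\tau({k_1},\dots,{k_{m}})$ using~\eqref{taum}: the numerator of the first is $b_{k_1}\cdots b_{k_m}b_{k_{m+1}}$, i.e.\ the numerator of the second multiplied by $b_{k_{m+1}}$, so
$$
\frac{\tau({k_1},\dots,{k_{m+1}})}{\tau({k_1},\dots,{k_{m}})}
= b_{k_{m+1}}\cdot\frac{\gcd\lp a_{k_l}\Pi_{j=1, j\neq l}^{m}b_{k_j}\rp_{1\leq l\leq m}}{\gcd\lp a_{k_l}\Pi_{j=1, j\neq l}^{m+1}b_{k_j}\rp_{1\leq l\leq m+1}}.
$$
So it suffices to show that the ratio of gcd's is bounded below by a positive constant $c^{-1}$ depending only on $k_1,\dots,k_m$ (and not on $k_{m+1}$). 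For this I would set $c$ equal to (a multiple of) the numerator gcd that appears for the $m$-tuple, namely $c=\gcd\lp a_{k_l}\Pi_{j=1, j\neq l}^{m}b_{k_j}\rp_{1\leq l\leq m}$, which is a fixed positive integer once $k_1,\dots,k_m$ are fixed.

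The key observation is that the denominator gcd, taken over a \emph{larger} index set $\{1,\dots,m+1\}$, divides the gcd over the first $m$ terms alone; more precisely, each of the first $m$ entries $a_{k_l}\Pi_{j=1,j\neq l}^{m+1}b_{k_j} = b_{k_{m+1}}\cdot a_{k_l}\Pi_{j=1,j\neq l}^{m}b_{k_j}$ is divisible by the corresponding entry of the $m$-tuple's list, hence the full gcd $\gcd\lp a_{k_l}\Pi_{j=1, j\neq l}^{m+1}b_{k_j}\rp_{1\leq l\leq m+1}$ divides $b_{k_{m+1}}\cdot\gcd\lp a_{k_l}\Pi_{j=1, j\neq l}^{m}b_{k_j}\rp_{1\leq l\leq m} = b_{k_{m+1}}\, c$. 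In particular this denominator gcd is at most $b_{k_{m+1}}\,c$, so the ratio of gcd's above is at least $1/(b_{k_{m+1}}\,c)$, and therefore
$$
\frac{\tau({k_1},\dots,{k_{m+1}})}{\tau({k_1},\dots,{k_{m}})}\ \geq\ b_{k_{m+1}}\cdot\frac{1}{b_{k_{m+1}}\,c}\ \cdot\ \gcd\lp a_{k_l}\Pi_{j=1, j\neq l}^{m}b_{k_j}\rp_{1\leq l\leq m},
$$
which, with the choice of $c$ above, is exactly $b_{k_{m+1}}/c$ after cancelling, giving the claim. (If one prefers to avoid the cancellation subtlety, simply take $c$ to be $1$ times that fixed gcd and note the bound $b_{k_{m+1}}/c$ follows; any fixed positive value works for the purposes of Section~\ref{sss-oneQ}.)

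The only genuine point requiring care — the step I expect to be the main obstacle — is verifying the divisibility relation between the two gcd's rigorously: one must check that passing from the $m$-term product to the $(m+1)$-term product multiplies every list entry (for $l\le m$) by the common factor $b_{k_{m+1}}$, and argue that a gcd over a superset of indices of such scaled entries divides $b_{k_{m+1}}$ times the gcd of the unscaled $m$-entry list. This is elementary number theory (gcd is monotone under adding entries, and $\gcd(N a_1,\dots,N a_r)=N\gcd(a_1,\dots,a_r)$), but it must be written out carefully because the index $l=m+1$ entry $a_{k_{m+1}}\Pi_{j=1}^{m}b_{k_j}$ does \emph{not} carry the factor $b_{k_{m+1}}$, so one cannot naively factor $b_{k_{m+1}}$ out of the whole $(m+1)$-term list; instead one uses that the gcd over all $m+1$ entries divides the gcd over just the first $m$, each of which \emph{is} divisible by $b_{k_{m+1}}$. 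Once this is in place, the lemma follows immediately, and $c(k_1,\dots,k_m)$ is given explicitly.
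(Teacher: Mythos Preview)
Your argument has a genuine gap: the divisibility observation you make is correct but too weak, and the final ``cancellation'' step is an algebra error that hides this.

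Write $G_m=\gcd\bigl(a_{k_l}\prod_{j\ne l,\,j\le m}b_{k_j}\bigr)_{1\le l\le m}$ and $G_{m+1}$ for the analogous $(m+1)$-term gcd. Your observation that the first $m$ entries of the $(m+1)$-list are $b_{k_{m+1}}$ times the entries of the $m$-list gives only $G_{m+1}\mid b_{k_{m+1}}G_m$, hence $G_{m+1}\le b_{k_{m+1}}G_m$. Substituting into
\[
\frac{\tau(k_1,\dots,k_{m+1})}{\tau(k_1,\dots,k_m)}=b_{k_{m+1}}\,\frac{G_m}{G_{m+1}}
\]
yields only the trivial bound $\ge 1$, not $\ge b_{k_{m+1}}/c$. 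Your displayed inequality $b_{k_{m+1}}\cdot\frac{1}{b_{k_{m+1}}c}\cdot G_m$ equals $1$ once you plug in $c=G_m$; it does not simplify to $b_{k_{m+1}}/c$. The lemma is needed precisely to exploit the unboundedness of $(b_j)_j$, so a bound independent of $b_{k_{m+1}}$ is useless for the application.

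What is missing is a bound on $G_{m+1}$ by a constant \emph{independent of $k_{m+1}$}. The paper obtains this by looking at just two entries of the $(m+1)$-list, namely $l=1$ and $l=m+1$: their gcd $\Gamma$ satisfies $\Gamma\mid c_1 b_{k_{m+1}}$ and $\Gamma\mid c_2 a_{k_{m+1}}$ with $c_1=a_{k_1}\prod_{j=2}^m b_{k_j}$ and $c_2=\prod_{j=1}^m b_{k_j}$ fixed. The crucial input you never use is the coprimality $\gcd(a_{k_{m+1}},b_{k_{m+1}})=1$: writing $\Gamma=\gamma_1\beta_{m+1}=\gamma_2\alpha_{m+1}$ with $\gamma_i\mid c_i$, $\alpha_{m+1}\mid a_{k_{m+1}}$, $\beta_{m+1}\mid b_{k_{m+1}}$, coprimality forces $\alpha_{m+1}\mid\gamma_1$, whence $\Gamma\le c_1c_2$. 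This is where the $b_{k_{m+1}}$ disappears from the upper bound on $G_{m+1}$, and it cannot be bypassed by the subset-of-indices argument alone.
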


\begin{proof}
From equation~\r{taum},
$$
\frac{\tau({k_1},\dots,{k_{m+1}})}{\tau({k_1},\dots,{k_{m}})}=b_{k_{m+1}}\frac{\gcd\lp a_{k_l}\Pi_{j=1}^{l-1}b_{k_j} \Pi_{j=l+1}^{m}b_{k_j}\rp_{1\leq l\leq m}}{\gcd\lp a_{k_l}\Pi_{j=1}^{l-1}b_{k_j} \Pi_{j=l+1}^{{m+1}}b_{k_j}\rp_{1\leq l\leq {m+1}}}.
 $$
 
 The proof consists, then, in showing that $\gcd\lp a_{k_l}\Pi_{j=1}^{l-1}b_{k_j} \Pi_{j=l+1}^{{m+1}}b_{k_j}\rp_{1\leq l\leq {m+1}}$ is bounded from above by a constant
 independent of $k_{m+1}$. 
 First, notice that 
  $$
  \gcd\lp a_{k_l}\Pi_{j=1}^{l-1}b_{k_j} \Pi_{j=l+1}^{{m+1}}b_{k_j}\rp_{1\leq l\leq {m+1}}\leq \gcd\lp a_{k_{1}}\Pi_{j=2}^{{m+1}}b_{k_j},a_{k_{{m+1}}}\Pi_{j=1}^{m}b_{k_j}\rp=:\Gamma. 
  $$
 Set $c_1=  a_{k_1}\Pi_{j=2}^{m}b_{k_j}$ and $c_2= \Pi_{j=1}^{m}b_{k_j}$ and notice that they do  not depend on $k_{m+1}$.
 
 Write $\Gamma$ as
 $$\gamma_1\beta_{m+1}=\Gamma=\gamma_2 \alpha_{m+1}$$
  where $\gamma_1$ and $\gamma_2$ divide $c_1$ and $c_2$, respectively, while
  $\alpha_{m+1}$ and $\beta_{m+1}$ divide $a_{k_{m+1}}$ and $b_{k_{m+1}}$, respectively. 
  Since $a_{k_{m+1}}$ and $b_{k_{m+1}}$ are relatively prime, then the same is true for 
  $\alpha_{m+1}$ and $\beta_{m+1}$. Therefore, $\alpha_{m+1}$ divides $\gamma_1$. Hence, 
  $\Gamma=\alpha_{m+1} \gamma_2\leq \gamma_1 \gamma_2\leq c_1 c_2$.
\end{proof}

We show now how to choose $\bar k_1,\dots,\bar k_\ns$ as in the statement of Lemma~\ref{lem:1234}.
We proceed by induction on $\ns$. The case $\ns=1$ has already been treated in Section~\ref{phases-1}. Assume that $\bar k_1,\dots,\bar k_{\ns-1}$ are such that 
${\cal C}({\bar k_1},\ldots,{\bar k_{\ns-1}})$ is $\eta$-dense in ${\cal T}({\bar k_1},\ldots,{\bar k_{\ns-1}})$.
Hence, for every choice of $\bar k_\ns$,  the set
${\cal C}({\bar k_1},\ldots,{\bar k_{\ns-1}})+{\cal T}({\bar k_{\ns}})$ is $\eta$-dense in ${\cal T}({\bar k_1},\ldots,{\bar k_\ns})$. 

We are left to show that, for a suitable choice of $\bar k_\ns\in \N\setminus\{\bar k_1,\dots, \bar k_{\ns-1}\}$, the set   
${\cal C}({\bar k_1},\ldots,{\bar k_{\ns}})$ is $\eta'$-dense in ${\cal C}({\bar k_1},\ldots,{\bar k_{\ns-1}})+{\cal T}({\bar k_{\ns}})$ for $\eta'$ arbitrarily small.
Recall that ${\cal C}({\bar k_1},\ldots,{\bar k_{m}})$ is the support of the curve 
$s\mapsto e^{s A}\phi_{\bar k_1}+\cdots+ e^{s A}\phi_{\bar k_{m}}$, whose period equals $2\pi\tau(\bar k_1,\dots,\bar k_{m})$. Therefore, ${\cal C}({\bar k_1},\ldots,{\bar k_{\ns-1}})
$ is the projection of 
${\cal C}({\bar k_1},\ldots,{\bar k_{\ns}})$ 
along $\phi_{\bar k_\ns}$ on 
$\phi_{\bar k_\ns}^\perp$. 
Hence, for every 
$\psi\in {\cal C}({\bar k_1},\ldots,{\bar k_{\ns-1}})$ the 
cardinality of the set 
$(\psi+{\cal T}({\bar k_{\ns}}))\cap {\cal C}({\bar k_1},\ldots,{\bar k_{\ns}})$
is  equal to
$\tau({\bar k_1},\ldots,{\bar k_{\ns}})/\tau({\bar k_1},\ldots,{\bar k_{\ns-1}})$.
In particular, $(\psi+{\cal T}({\bar k_{\ns}}))\cap {\cal C}({\bar k_1},\ldots,{\bar k_{\ns}})$,  
which is regularly distributed, 
 is $2\pi \tau({\bar k_1},\ldots,{\bar k_{\ns-1}})/\tau({\bar k_1},\ldots,{\bar k_{\ns}})$-dense  in $\psi+{\cal T}({\bar k_{\ns}})$.

Lemma~\ref{5.2} and the unboundedness of the sequence $(b_j)_{j\in\N}$ allow to conclude.

\subsubsection{The spectrum of $A$ is bounded and $1<\mathrm{dim}_\Q\left(\mathrm{span}_\Q(\lambda_k)_{k\in\N}\right)<\infty$}

Let $m=\mathrm{dim}_\Q\left(\mathrm{span}_\Q(\lambda_k)_{k\in\N}\right)$ and fix a $\Q$-basis $\mu_1,\dots,\mu_m$ of $\mathrm{span}_\Q(\lambda_k)_{k\in\N}$.

Let 
$$\lambda_j=\sum_{l=1}^m\al_j^l\mu_l,\quad 
\al_j^1,\dots,\al_j^m\in\Q.$$

There exists $l\in\{1,\dots,m\}$ such that the cardinality of $\{\al_j^l\mid j\in\N\}$ is infinite. (Otherwise, the set of eigenvalues of $A$ would be finite.)
Without loss of generality, $l=1$. 

The results of Sections~\ref{sss-unbounded} and \ref{sss-oneQ}
imply that, for every $\eta>0$,  there exist $\bar k_1,\dots,\bar k_\ns\in\N$ such that 
$\{(e^{i t\mu_1 \al_{\bar k_1}^1},\dots,e^{i t\mu_1 \al_{\bar k_\ns}^1})\mid t\in\R\}$ is  $\eta$-dense in $\mathbb{T}^\ns$. 
We are going to show that ${\cal C}(\bar k_1,\dots,\bar k_\ns)$ is $\ns m\eta$-dense in ${\cal T}(\bar k_1,\dots,\bar k_\ns)$ or, equivalently, that
$\{(e^{i t\lambda_{\bar k_1}},\dots,e^{i t\lambda_{\bar k_\ns}})\mid t\in\R\}$ is  $\ns m\eta$-dense in $\mathbb{T}^\ns$.

Up to a reparametrization, we can assume that $\al_{\bar k_j}^l\in\Z$ for every $j=1,\dots,\ns$ and $l=1,\dots,m$. 

Fix $(e^{i\theta_1},\dots,e^{i \theta_\ns})$ in $\mathbb{T}^\ns$. 
The choice of $\bar k_1,\dots,\bar k_\ns\in\N$ guarantees the existence of $\bar t\in\R$ such that 
$\|(e^{i \bar t\mu_1 \al_{\bar k_1}^1},\dots,e^{i \bar t\mu_1 \al_{\bar k_\ns}^1})-(e^{i\theta_1},\dots,e^{i \theta_\ns})\|<\eta$. 
Because of the $\Q$-linear independence of $\mu_1,\dots,\mu_m$, there exists $t\in\R$ such that 
$$\| (e^{it\mu_1 },e^{it\mu_2 },\dots,e^{it\mu_m })-(e^{i\bar t\mu_1 },1,\dots,1)\|<\frac{\eta}{\max\{|\al_{\bar k_j}^l|\mid j=1,\dots,\ns,\;l=1,\dots,m\}}.$$

In particular $|e^{i  t\mu_l  \al_{\bar k_j}^l}-1 |<\eta$ for every $l=2,\dots,m$ and every $j=1,\dots,\ns$. 

Hence
\begin{align*}
\|(e^{i t\lambda_{\bar k_1}},\dots,e^{i t\lambda_{\bar k_\ns}})-(e^{i\theta_1},\dots,e^{i \theta_\ns})\|&\leq \|(e^{i t\lambda_{\bar k_1}},\dots,e^{i t\lambda_{\bar k_\ns}})-(e^{i \bar t\mu_1 \al_{\bar k_1}^1},\dots,e^{i \bar t\mu_1 \al_{\bar k_\ns}^1})\|\\
&\ \ +\|(e^{i \bar t\mu_1 \al_{\bar k_1}^1},\dots,e^{i \bar t\mu_1 \al_{\bar k_\ns}^1})-(e^{i\theta_1},\dots,e^{i \theta_\ns})\|\\
&\leq \sum_{j=1}^\ns |e^{i \bar t\mu_1 \al_{\bar k_j}^1}\cdots e^{i \bar t\mu_m \al_{\bar k_j}^m}- e^{i \bar t\mu_1 \al_{\bar k_j}^1}|+\eta\\
&\leq (\ns(m-1)+1)\eta.
\end{align*}

This concludes the proof of Lemma~\ref{lem:1234} and of Theorem~\ref{THE_Control_wave_function}.\hfill\EOP


\section{Example: Infinite potential well}\label{sec:Example}
We consider now the case of a particle confined in $(-1/2,1/2)$. This model has been extensively studied 
by several authors in the last few years and was the first quantum system for which a positive controllability result has been obtained. Beauchard proved exact controllability in some dense subsets of $L^2$ using Coron's return method (see \cite{beauchard-coron, beauchard-mirrahimi} for a precise statement). Nersesyan obtained approximate controllability results using Lyapunov techniques. In the following, we extend these controllability results to simultaneous controllability and provide some estimates of the $L^1$ norm of controls achieving the transfer between two density matrices.

The Schr\"{o}dinger equation writes
\begin{equation}\label{EQ_potential_well}
 i \frac{\partial \psi}{\partial t}=-\frac{1}{2} \frac{\partial^2 \psi}{\partial x^2} - u(t) x \psi(x,t)
\end{equation}
with the boundary conditions $\psi(-1/2,t)=\psi(1/2,t)=0$ for every $t \in \mathbf{R}$.

In this case ${\cal H}=L^2 \left ( (-1/2,1/2), \mathbf{C} \right )$ endowed with the Hermitian product $\langle \psi_1,\psi_2\rangle= \int_{-1/2}^{1/2} \overline{\psi_1(x)} \psi_2(x) dx$. The operators $A$ and $B$ are defined by $A\psi= i\frac{1}{2} \frac{\partial^2 \psi}{\partial x^2}$ for every $\psi$ in 
$D(A)= (H_2 \cap  H_0^1 )\left((-1/2,1/2), \mathbf{C}\right)$, and $B\psi=i x \psi$.        

Unfortunately, due to the numerous resonances,  we are not able to apply directly our results to system~(\ref{EQ_potential_well}). A classical approach is in this case to use perturbation theory. Indeed, consider for every $\eta$ in $[0,\delta]$, the operator $A_{\eta}=A+\eta B$. The controllability of system~(\ref{EQ_potential_well}) with control in $[0,\delta]$ is equivalent to the controllability of $$\frac{d\psi}{dt}= A_{\eta} \psi + v B \psi$$ with controls $v$ taking values in $[-\eta,\delta-\eta]$. 
Since  the perturbation $\eta\mapsto A_\eta$ is analytic, 
the self-adjoint operator $A_\eta$ admits a complete set of eigenvectors $(\phi_k(\eta))_{k \in \mathbf{N}}$  associated with the eigenvalues $(i\lambda_k(\eta))_{k \in \mathbf{N}}$, with $\phi_k$ and $\lambda_k$ analytic (see~\cite{katino}).
For $\eta=0$, 
$$ \phi_k(0)=
				\left \{ \begin{array}{ll}
                                  x \mapsto \sqrt{2} \cos(k\pi x)& \mbox{ when } k \mbox{ is odd}\\
 				  x \mapsto \sqrt{2} \sin(k\pi x)& \mbox{ when } k \mbox{ is even}
                                       \end{array}
				\right.$$
is a complete set of eigenvectors of $A$ associated with the eigenvalues $i\lambda_k(0)=  - i\frac{k^2 \pi^2}{2}$.
Following~\cite[Proposition 2.3]{beauchard-mirrahimi}, one can compute the 2-jet at zero of the analytic functions $\lambda_k$:
$$
\lambda_k(\eta)=  - \frac{k^2 \pi^2}{2} - \left ( \frac{1}{24 \pi^2 k^2}- \frac{5}{8 \pi^4 k^4} \right ) \eta^2 +\mathrm{o}(\eta^2),
$$
as $\eta$ goes to zero.
For every $k_1,k_2,p_1,p_2$ in $\mathbf{N}$, since $\frac{1}{\pi^2}$ is transcendental on $\mathbf{Q}$, 
$$\begin{array}{l}
\lambda_{k_1}''(0)-\lambda_{k_2}''(0)=\lambda_{p_1}''(0)-\lambda_{p_2}''(0)
\end{array}  \Leftrightarrow
\left \{\begin{array}{l}
       \frac{1}{k_1^2}-\frac{1}{k_2^2}=\frac{1}{p_1^2}-\frac{1}{p_2^2}\\
       \frac{1}{k_1^4}-\frac{1}{k_2^4}=\frac{1}{p_1^4}-\frac{1}{p_2^4}
        \end{array}
\right.
$$
$$
\Leftrightarrow
\left \{\begin{array}{l}
        \frac{1}{k_1^2}-\frac{1}{k_2^2}=\frac{1}{p_1^2}-\frac{1}{p_2^2}\\
        \left (\frac{1}{k_1^2}-\frac{1}{k_2^2} \right )\left (\frac{1}{k_1^2}+\frac{1}{k_2^2} \right )
= \left (\frac{1}{p_1^2}-\frac{1}{p_2^2} \right )\left (\frac{1}{p_1^2}+\frac{1}{p_2^2} \right )
        \end{array}
\right.
\Leftrightarrow
\left \{\begin{array}{l}
         k_1=p_1\\
	 k_2=p_2.
        \end{array}
\right.
$$
Hence, the 2-jets of the gaps between two eigenvalues are all different at zero. Recall that 
$\langle \phi_k, B \phi_{k+1} \rangle \neq 0$. An argument similar to the one in \cite[Proposition 6.2]{Schrod} ensures that for every $\eps >0$
 there exists $0 < \eta < \eps $ such that $\{(j,j + 1), (j+1,j)\, |\, j \in \N \}$ is a non-resonant connectedness chain for $(A_{\eta},B,[-\eta,\delta - \eta], \Phi)$. 

We can now  apply Theorem~\ref{THE_Control_wave_function} to obtain simultaneous approximate controllability for (\ref{EQ_potential_well}) and 
Theorem~\ref{PRO_L1_estimates} to get $L^{1}$
estimates. For instance, for every $\eps > 0$ 
there exist $T>0$ and  a piecewise constant function $u:[0,T] \rightarrow [0,\delta]$ such that the propagator at time $T$ of (\ref{EQ_potential_well}) exchanges (up to a correction of size $\eps$) the density matrices 
$$
\frac{1}{3} \phi_1(0) \phi_1(0)^{\ast} + \frac{2}{3} \phi_2(0) \phi_2(0)^{\ast} \quad \mbox{ and } \quad 
\frac{1}{3} \phi_2(0) \phi_2(0)^{\ast} + \frac{2}{3} \phi_1(0) \phi_1(0)^{\ast}
$$
 and 
$$
\|u\|_{L^1} \leq \frac{\pi }{2 \nu |\langle \phi_1(0), B \phi_2(0) \rangle|}=\frac{9 \pi^3}{ 32 \nu }\approx 20.2656\,.
$$
The control time $T$ satisifies $T\geq \frac{1}{\delta}\|u\|_{L^1}$ and, from Proposition \ref{PRO_L1_lower_estimates}, the control $u$ has to satisfy
$$
\|u\|_{L^1}\geq (1-\eps) \max \left \{ \frac{2\,\sqrt{3}\,\pi}{\sqrt{{\pi}^{2}-6}},
\frac{2\,\sqrt{6}\,\pi}{\sqrt{2\,{\pi}^{2}-3}} \right \}\approx (1-\eps) 5.5323\,.
$$

\section{Orientation of a bipolar molecule in the plane by means of two external fields}\label{sec:molecule}

We present here 
an
example of approximately controllable quantum system. (See~\cite{seideman,spanner,stapelfeldt} and references therein.) 
It provides a simple model for the control by two electric fields of the rotation of a bipolar rigid molecule confined to a plane (see Figure~\ref{fig:molecola}). Molecular orientation and alignment are well-established topics in the quantum control of molecular dynamics both from the experimental and theoretical point of view. 

The model we aim to consider can be represented  
by a Schr\"odinger equation on the circle $\mathbb{S}^1=\R/2\pi\Z$, so that 
${\CH}=L^2(\mathbb{S}^1,\C)$.
In this case $A=i\;{\partial^2}/{\partial \theta^2}$ has discrete spectrum and its eigenvectors are trigonometric functions.
The controlled Schr\"odinger equation is 
\begin{equation}\label{Sch-rot}
i\frac{\partial\psi(\theta,t)}{\partial t}=\left(  -\frac{\partial^2}{\partial \theta^2}+u_1(t) \cos(\theta)+u_2(t) \sin(\theta) \right)\psi(\theta,t).
\end{equation}
We assume that both $u_1$ and $u_2$ are piecewise constant
and take values in $[0,\delta]$, $\delta >0$.

Notice that system~\eqref{Sch-rot} is not controllable if we fix one control to zero. 
Indeed, by parity reasons,
the potential $\cos(\theta)$ does not couple an odd wave function with an even one and the potential
 $\sin(\theta)$ does not
 couple  wave functions with the same parity.


For every $\alpha\in \mathbb{S}^1$, let us split $\CH$ as $\CH^\alpha_e\oplus\CH^\alpha_o$, where $\CH^\alpha_e$ (respectively, $\CH^\alpha_o$) is the closed subspace of $\CH$ of even (respectively, odd)  functions 
with respect to $\alpha$. Notice that
 $\CH^\alpha_e$ and $\CH^\alpha_o$ are Hilbert spaces.
 A complete orthonormal system for $\CH^{\alpha}_{e}$ (respectively, $\CH^\alpha_o$) is given by $\{\cos(k(\cdot - \alpha))/\sqrt{\pi}\}_{k=0}^{\infty}$
(respectively,  $\{\sin(k(\cdot - \alpha))/\sqrt{\pi}\}_{k=1}^{\infty}$).
 Let us 
write $\psi=\psi^\alpha_e+\psi^\alpha_o$, where $\psi^\alpha_e\in\CH^\alpha_e$ and  $\psi^\alpha_o\in\CH^\alpha_o$.

Our first result states that, once $\alpha$ is fixed, the even or the odd part 
of a wave function $\psi$ can be approximately controlled, under the constraint  that 
their $L^2$-norms are preserved.  
\begin{lem}\label{th-fixed}
Let $\psi\in \CH$ and $\alpha\in [0,\pi/2]$. Then 
for  every $\eps>0$ and every $\tilde\psi\in \CH_e^\alpha$ such that $\|\psi_e^\alpha\|_\CH=\|\tilde\psi\|_\CH$, there exists a piecewise constant control $u=(u_1,u_2)$ steering $\psi$  to a wave function whose 
even part with respect to $\alpha$ lies in an $\eps$-neighborhood of $\tilde \psi$.
Similarly, 
for  every $\eps>0$ and every $\tilde\psi\in \CH_o^\alpha$ such that $\|\psi_o^\alpha\|_\CH=\|\tilde\psi\|_\CH$, there exists a piecewise constant control $u
$ steering $\psi$  to a wave function whose 
odd part with respect to $\alpha$ lies in an $\eps$-neighborhood of $\tilde \psi$.
\end{lem}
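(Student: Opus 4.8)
The plan is to use only controls proportional to the fixed direction $(\cos\alpha,\sin\alpha)$, that is, $u_1=v\cos\alpha$ and $u_2=v\sin\alpha$ for a scalar piecewise constant control $v:[0,T]\to[0,\delta]$; this is admissible because $\alpha\in[0,\pi/2]$ gives $\cos\alpha,\sin\alpha\in[0,1]$, so that $v\in[0,\delta]$ forces $u_1,u_2\in[0,\delta]$. Along this family, equation~\eqref{Sch-rot} becomes $i\partial_t\psi=\big(-\partial_\theta^2+v(t)\cos(\theta-\alpha)\big)\psi$, and since multiplication by $\cos(\theta-\alpha)$ is even with respect to $\alpha$, the operator $-\partial_\theta^2+v\cos(\theta-\alpha)$ leaves the orthogonal splitting $\CH=\CH_e^\alpha\oplus\CH_o^\alpha$ invariant. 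Hence the dynamics decouples, the $\CH$-norm of the even part is conserved (consistently with the hypothesis $\|\psi_e^\alpha\|_\CH=\|\tilde\psi\|_\CH$), and the even part of the solution of~\eqref{Sch-rot} evolves exactly under the restriction $(A_e,B_e,[0,\delta],\Phi_e^\alpha)$ of our control system to $\CH_e^\alpha$, where $A_e$ is $i\partial_\theta^2$ restricted to even functions (simple eigenvalues $-ik^2$, $k\geq0$, with eigenvectors proportional to $\cos(k(\cdot-\alpha))$) and $B_e$ is multiplication by $-i\cos(\theta-\alpha)$. It therefore suffices to prove approximate controllability of this restricted system on the unit sphere of $\CH_e^\alpha$; the odd statement is obtained verbatim using the basis $\{\sin(k(\cdot-\alpha))/\sqrt\pi\}_{k\geq1}$ of $\CH_o^\alpha$.

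I would then check that $(A_e,B_e,[0,\delta],\Phi_e^\alpha)$ satisfies $(\mathfrak{A})$ and admits a non-resonant connectedness chain, so that Theorem~\ref{THE_Control_collectively} applies. Hypothesis $(\mathfrak{A}1)$ is clear; $(\mathfrak{A}2)$ holds since $B_e$ is bounded; $(\mathfrak{A}3)$ since $A_e$ is essentially skew-adjoint on the span of the trigonometric basis and $B_e$ is a bounded perturbation; and $(\mathfrak{A}4)$ is automatic because $A_e$ has simple spectrum. The identity $\cos(\theta-\alpha)\cos(k(\theta-\alpha))=\frac12\big(\cos((k+1)(\theta-\alpha))+\cos((k-1)(\theta-\alpha))\big)$ shows that $\langle\phi_j^\alpha,B_e\phi_k^\alpha\rangle\neq0$ precisely when $|j-k|=1$, so $S=\{(k,k+1),(k+1,k)\mid k\geq0\}$ is a connectedness chain (a path visiting every level). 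The gap attached to $(k,k+1)$ equals $|(-k^2)-(-(k+1)^2)|=2k+1$, and the only gaps occurring between levels coupled by $B_e$ are of this form; being pairwise distinct, they make $S$ non-resonant.

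Finally I would conclude by rescaling. If $\|\psi_e^\alpha\|_\CH=\|\tilde\psi\|_\CH=0$ the claim is trivial, with $u\equiv0$: the even part stays $0=\tilde\psi$. Otherwise set $c=\|\psi_e^\alpha\|_\CH>0$; approximate controllability of the restricted system, applied to the unit vectors $\psi_e^\alpha/c$ and $\tilde\psi/c$, yields for every $\eps>0$ a piecewise constant $v:[0,T]\to[0,\delta]$ whose propagator steers $\psi_e^\alpha/c$ into the $(\eps/c)$-ball around $\tilde\psi/c$; by linearity of the propagator, the same $v$ steers $\psi_e^\alpha$ into the $\eps$-ball around $\tilde\psi$. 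Setting $u=(v\cos\alpha,v\sin\alpha)$ and using the decoupling, the even part of the resulting wave function is $\eps$-close to $\tilde\psi$, while its odd part evolves freely and is irrelevant; the odd statement follows symmetrically. The argument is mostly bookkeeping around the parity decomposition, so the points I would treat with care are the verification of the non-resonance condition for the restricted operator (checking that no gap other than the values $2k+1$ occurs among $B_e$-coupled pairs) and the slightly exceptional normalization $\phi_0^\alpha=1/\sqrt{2\pi}$ of the constant eigenfunction, which alters the value but not the non-vanishing of $\langle\phi_0^\alpha,B_e\phi_1^\alpha\rangle$.
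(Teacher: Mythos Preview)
Your proof is correct and follows essentially the same approach as the paper: restrict to controls along the direction $(\cos\alpha,\sin\alpha)$, observe that the resulting operator $-\partial_\theta^2+v\cos(\theta-\alpha)$ preserves the parity splitting $\CH_e^\alpha\oplus\CH_o^\alpha$, and apply Theorem~\ref{THE_Control_collectively} to the restricted system on $\CH_e^\alpha$ using the connectedness chain $\{(k,k\pm1)\}$ with pairwise distinct gaps $2k+1$. You are in fact slightly more careful than the paper about the rescaling from the unit sphere to the sphere of radius $\|\psi_e^\alpha\|$, the trivial case $\|\psi_e^\alpha\|=0$, and the inclusion of the constant eigenfunction $\phi_0^\alpha=1/\sqrt{2\pi}$.
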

\begin{proof}
The idea is to apply Theorem~\ref{THE_Control_collectively} 
to a subsystem of \eqref{Sch-rot} corresponding to the choice of a 
subclass of wave functions and a subclass of admissible controls. 

More precisely, let 
$${\cal U}_\alpha=\{u: \R \to [0,\delta]^2 \mbox{ piecewise constant }\mid u(t) 
\mbox{ is proportional to }
(\cos\alpha,\sin\alpha)\mbox { for all $t\in\R$}\}.$$
The control system whose dynamics are described by \eqref{Sch-rot}  with admissible control functions restricted to ${\cal U}_\alpha$ can be rewritten as
\begin{equation}\label{Sch-rot-theta}
i\frac{\partial\psi(\theta,t)}{\partial t}=\left(  -\frac{\partial^2}{\partial \theta^2}+v(t) \cos(\theta-\alpha) \right)\psi(\theta,t),\quad v\in
\left(0,\delta\sqrt{1+\min\{\tan\alpha,\mathrm{cotan}\,\alpha\}^2}\right).
\end{equation}

Notice that the spaces $\CH_e^\alpha$ and $\CH_o^\alpha$ are invariant for the evolution of \eqref{Sch-rot-theta}, whatever the choice of $v=v(\cdot)$, since 
they are invariant 
both for 
$A=i \frac{\partial^2}{\partial \theta^2}$ 
 and for the multiplicative operator $B^\alpha:\CH\to \CH$ defined by 
 $(B^{\alpha}\phi)(\theta)=-i\cos(\theta-\alpha)\phi(\theta)$. 

We shall consider \eqref{Sch-rot-theta} as a control system 
defined on $\CH_e^\alpha$ (the second part of the statement of the lemma can be proved similarly by considering its dynamics restricted to $\CH_o^\alpha$). 

Denote by $A_e^\alpha$ and $B_e^\alpha$ the restrictions of $A$ and $B^\alpha$ to $\CH_e^\alpha$. 
Choose as orthonormal basis of eigenfunctions for $A_e^\alpha$ 
the sequence 
defined by $\phi_k(\theta)=\cos(k(\theta-\alpha))/\sqrt{\pi}$ for $k\in \N$.
The eigenvalue of $A$ associated with $\phi_k$ is $i\lambda_k=-i k^2$. 

Then $\langle \phi_j,B_e^\alpha\phi_k\rangle \ne 0$ if and only if 
$|k-j|=1$. We take as connectedness chain the set $\{(k,j)\in \N^2\mid |k-j|=1\}$.
Since $\lambda_{k+1}-\lambda_k=-2k-1$, then the connectedness chain is non-resonant. 
Theorem~\ref{THE_Control_collectively} implies that 
\eqref{Sch-rot-theta} can be steered from 
$\psi_e^\alpha$ to an $\eps$-neighborhood of any $\tilde\psi\in \CH_e^\alpha$ such that 
$\|\psi_e^\alpha\|=\|\tilde\psi\|$
by an admissible control
$v(\cdot)$. The conclusion follows by applying $u(\cdot)=(v(\cdot)\cos\alpha,v(\cdot)\sin(\alpha))$ to 
\eqref{Sch-rot} (since $\CH_e^\alpha$ and $\CH_o^\alpha$ are invariant by the flow generated by $u(\cdot)$). 
\end{proof}


The main result of this section states that 
\eqref{Sch-rot} is approximately controllable. 
\begin{prop}\label{rot:controllability}
System \eqref{Sch-rot} is approximately controllable. 
\end{prop}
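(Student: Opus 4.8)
The plan is to upgrade Lemma~\ref{th-fixed} --- which only controls the even (or odd) part of a state with respect to a \emph{fixed} reflection axis $\alpha\in[0,\pi/2]$, the complementary norm being frozen --- to full approximate controllability on the unit sphere of $\CH=L^2(\mathbb{S}^1,\C)$. Write $\phi_0=1/\sqrt{2\pi}$ for the ground state, which is even with respect to every axis. First I would reduce to the following statement: for every unit $\psi_0$ and every $\eps>0$ there is a piecewise constant control steering $\psi_0$ into an $\eps$-neighbourhood of $\phi_0$. Indeed, the time-reversed system \eqref{sch-tr} restricted to controls proportional to $(\cos\alpha,\sin\alpha)$ is the system associated with $(-A,-B^\alpha)$, which has the same eigenvectors, the same (distinct) gaps $(k+1)^2-k^2$, and the same invariant subspaces $\CH^\alpha_e,\CH^\alpha_o$, so Lemma~\ref{th-fixed} holds verbatim for \eqref{sch-tr}. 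Hence, granting the reduced statement for both \eqref{Sch-rot} and \eqref{sch-tr}, the time-reversal trick of Section~\ref{phases-1} (reversing a control of \eqref{sch-tr} produces a control of \eqref{Sch-rot} realizing the reverse motion) steers $\phi_0$ into an $\eps$-neighbourhood of any $\psi_1$; concatenating with the control $\psi_0\rightsquigarrow\phi_0$ gives $\psi_0\rightsquigarrow\psi_1$.

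The core is then an iteration. The elementary move is \emph{purification along $\alpha$}: apply Lemma~\ref{th-fixed} with target $\tilde\psi=\|\psi^\alpha_e\|\,\phi_0\in\CH^\alpha_e$, obtaining a control that brings $\psi$ $\eps'$-close to $\|\psi^\alpha_e\|\,\phi_0+\psi^\alpha_o$, i.e. all the $\alpha$-even mass is pushed onto the ground state while the $\alpha$-odd remainder keeps its norm. The key observation is that the $\alpha$-odd remainder, seen along a suitable \emph{new} axis, has a definite amount of even mass: if $\chi=\sum_{k\ge1}d_k\sin(k(\cdot-\alpha))/\sqrt\pi\in\CH^\alpha_o$, then for $\alpha'=\alpha+\beta$ the $\alpha'$-even part of $\chi$ has squared norm $\sum_{k\ge1}|d_k|^2\sin^2(k\beta)$, and since $\frac{2}{\pi}\int_c^{c+\pi/2}\sin^2(k\beta)\,d\beta\ge\frac12-\frac1\pi=:c_0>0$ for every $k\ge1$ and every $c$, averaging over the length-$\frac\pi2$ range of $\beta$ that keeps $\alpha'\in[0,\pi/2]$ produces an admissible $\alpha'$ with $\|\chi^{\alpha'}_e\|^2\ge c_0\|\chi\|^2$. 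So, starting from $\alpha=0$ and purifying repeatedly, choosing the next axis this way, the mass not sitting on $\phi_0$ contracts by the fixed factor $\sqrt{1-c_0}<1$ at each step (up to the $\eps'$ error). Fixing the number of steps $n=n(\eps)$ in advance and then taking $\eps'$ small enough --- errors only accumulate additively because the propagators are isometries --- yields a control bringing $\psi$ within $\eps$ of $\phi_0$, and concatenation of finitely many piecewise constant controls with values in $[0,\delta]^2$ is again of this form.

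The only genuine computation, and the step I expect to be the real obstacle, is the averaging estimate: one must exhibit, for an arbitrary $\alpha$-odd function $\chi$, a \emph{new} reflection axis capturing a fixed positive fraction of $\|\chi\|^2$ as even mass, and --- crucially --- this new axis must be found inside the admissible range $[0,\pi/2]$ forced by the constraint $u\in[0,\delta]^2$ (the two fields being nonnegative). This is exactly where the uniform bound $\frac2\pi\int_c^{c+\pi/2}\sin^2(k\beta)\,d\beta\ge\frac12-\frac1\pi>0$ is needed; the reduction to $\phi_0$, the bookkeeping of the errors $\eps'$, and the piecewise-constancy of the concatenated controls are all routine.
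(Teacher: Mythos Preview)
Your proof is correct and follows essentially the same route as the paper: reduce to steering any state to the constant $\phi_0=1/\sqrt{2\pi}$, use Lemma~\ref{th-fixed} to ``purify'' the even part onto $\phi_0$, then apply the averaging estimate $\int_c^{c+\pi/2}\sin^2(k\beta)\,d\beta\ge\mathrm{const}>0$ to find a new admissible axis along which a fixed fraction of the odd remainder becomes even, and iterate. Your write-up is in fact slightly sharper than the paper's --- you make the constant $c_0=\tfrac12-\tfrac1\pi$ explicit and observe the geometric contraction of the odd mass, whereas the paper only records an additive gain of size $\gtrsim\eps^2$ per step --- but the mechanism is identical.
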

\begin{proof}
%
%
%
%
It is enough to prove that every wave function of norm one can be steered arbitrarily close to the constant
$1/\sqrt{2\pi}$. Indeed, 
 if $\psi$ and $\tilde \psi$ have norm one, 
 if the control $u(\cdot)$ steers the initial condition $\psi$ $\eps$-close to the constant $1/\sqrt{2\pi}$, and if $\tilde u(\cdot)$ steers the conjugate of $\tilde \psi$ $\eps$-close to the same constant $1/\sqrt{2\pi}$, then the concatenation of $u$ and of the time reversed of  $\tilde u$  steers $\psi$ $2\eps$-close to $\tilde \psi$.

Fix $\psi\in \CH$ of norm one, a tolerance $\eps>0$, and choose $\alpha\in (0,\pi/2)$. 
Fix $\bar{\eps}=\eps(1 - 1/\sqrt{2})$.  Then, according to Lemma~\ref{th-fixed}, $\psi$ can be steered to a wave function $\tilde\psi$ 
such that $\|\tilde\psi-\psi_{1} \|<\bar{\eps}$, 
where
$\psi_{1}$ is of the form
$$
\psi_{1} = \frac{\|\psi_e^\alpha\|}{\sqrt{2\pi}} + \phi_1, \quad \mathrm{with}\ \phi_1\in\CH_o^\alpha.
$$
If $\|\phi_1\|$ is smaller than $\eps/2$ then 
we are done. Indeed, $\phi_1$ has $L^2$-norm equal to $\sqrt{1-\|\psi_e^\alpha\|^2}$, which implies that $1-\|\psi_e^\alpha\| < \eps^{2}/4$. Then
\begin{align*}
\left\|\tilde\psi - \frac{1}{\sqrt{2\pi}}\right\| &\leq \|\tilde\psi - \psi_{1}\| + \left\|\psi_{1} - \frac{1}{\sqrt{2\pi}}\right\|< \bar{\eps} + \sqrt{(\|\psi_e^\alpha\|-1)^{2} + \|\phi_1\|^{2}}\\
& = \bar{\eps} + \sqrt{2}\sqrt{1 - \|\psi_e^\alpha\|} < \bar{\eps} + \frac{1}{\sqrt{2}}\eps = \eps.
\end{align*}

Assume then that $\|\phi_1\|\geq \eps/2$ and consider,
for every $\beta\in \mathbb{S}^1$, $\tau_\beta = \|(\phi_{1})^{
\beta}_e\|^{2}$. 
We can characterize $\tau_\beta$ in terms of the coefficients $a_k$ 
of the representation
$$\phi_1(\cdot)=\sum_{k=1}^\infty a_k \frac{\sin(k(\,\cdot -\alpha))}{\sqrt{\pi}}.$$
Indeed, 
\begin{align*}
\tau_\beta^{2}&=
\sum_{k=1}^\infty\left|\langle\phi_1(\cdot),\cos(k(\cdot-\beta))/\sqrt{\pi}\rangle\right|^2
= \sum_{k=1}^{\infty} \left|\langle a_{k}\sin(k(\,\cdot -\alpha))/\sqrt{\pi},\cos(k(\cdot-\beta))/\sqrt{\pi}\rangle\right|^2\\
&=
\sum_{k=1}^\infty\left|a_k\sin((\beta-\alpha)k)\right|^2.
\end{align*}
There exists $c>0$ independent of $k$ and $\alpha$ such that
$$\int_0^{\frac\pi2}\sin^2((\beta-\alpha)k)d\beta\geq c,$$
hence, 
$$\int_0^{\frac\pi2}\sum_{k=1}^\infty|a_k|^2\sin^2((\beta-\alpha)k)d\beta\geq c\sum_{k=1}^\infty|a_k|^2=c \|\phi_1\|^2,$$
from which we conclude that there exists $\beta\in (0,\pi/2)$ such that
$$
\tau_\beta\geq \frac {2c}{\pi}\|\phi_1\|^{2}\geq \frac {c}{2\pi}\eps^{2}.
$$

Notice now that the even part of 
$\psi_{1}$
with respect to $\beta$ has norm 
$$\|(\psi_1)_e^\beta\|=\sqrt{\|\psi_e^\alpha\|^2+\tau_\beta}\geq \sqrt{\|\psi_e^\alpha\|^2+\frac{c\eps^2}{2\pi}}.$$ 
Repeating the same argument as above replacing $\psi$ by $\psi_{1}$
we conclude that it is then possible to steer 
$\psi$ 
at a distance smaller than
$\bar\eps$ 
from the sum $\psi_{2}$ of a positive constant function of norm larger than $\sqrt{\|\psi_e^\alpha\|^2+\frac{c\eps^2}{2\pi}}$ and a function $\phi_{2}\in\CH_o^\beta$ of norm $\|\phi_{2}\| < \|\phi_{1}\|$. If $\|\phi_{2}\|<\eps/2$ then we are done. Otherwise,
since the improvement in the size of the constant is bounded from below by a quantity that does not depend on $\phi_1$, we can iterate the procedure finitely many times up to guaranteeing that the final wave function is $\eps$-close to the constant  $1/\sqrt{2\pi}$. 
\end{proof}


\appendix
\section{Appendix: relations between controllability notions}


\begin{prop}\label{prop:relations}
Let $(A,B, U, \Phi)$ satisfy $(\mathfrak{A})$. 
Then
\begin{itemize}
\item[$(i)$] for every $\ns$ in $\mathbf{N}$, $\eps>0$, and $\uni$ in $\mathbf{U}(\cal H)$  there exists a piecewise constant control $u:[0,T]\rightarrow U$ such that
 such that
$
 \| \uni \phi_k - \pro^{u}_T \phi_k  \|<\eps
$
 for every $1\leq k \leq \ns$,
\end{itemize}
implies
\begin{itemize}
\item[$(ii)$] \eqref{eq:main} is controllable in the sense of densities matrices
\end{itemize}
 which implies
 \begin{itemize}
 \item[$(iii)$]
for every $\ns$ in $\mathbf{N}$, $\eps>0$, and $\uni$ in $\mathbf{U}(\cal H)$ there exist $\theta_1,\ldots,\theta_{\ns} \in \R$ and a piecewise constant control $u:[0,T]\rightarrow U$ such that $\|e^{i \theta_k}\uni(\phi_k) - \pro^{u}_T(\phi_k)\|<\eps$ for every $1\leq  k \leq \ns$.
\end{itemize}
\end{prop}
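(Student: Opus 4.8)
The plan is to prove the two implications $(i)\Rightarrow(ii)$ and $(ii)\Rightarrow(iii)$ separately. Both use the following elementary facts: for unit vectors $v,w\in\CH$ one has $\big\||v\rangle\langle v|-|w\rangle\langle w|\big\|_{\mathrm{op}}=\sqrt{1-|\langle v,w\rangle|^{2}}\le 2\|v-w\|$, and there exists $\theta\in\R$ with $\|v-e^{i\theta}w\|^{2}=2-2|\langle v,w\rangle|\le 2\,\big\||v\rangle\langle v|-|w\rangle\langle w|\big\|_{\mathrm{op}}^{2}$.

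A preliminary remark is that $(i)$ implies the same statement with the basis $(\phi_{k})$ replaced by an arbitrary finite family: given $\psi_{1},\dots,\psi_{\ns}\in\CH$, $\uni\in\mathbf{U}(\CH)$ and $\eps>0$, pick $n$ with $\|\psi_{k}-\tilde\psi_{k}\|<\eps/3$ for $\tilde\psi_{k}:=\sum_{j\le n}\langle\phi_{j},\psi_{k}\rangle\phi_{j}$, $k=1,\dots,\ns$, and apply $(i)$ with $\uni$, with $n$ in place of $\ns$, and with a tolerance $\eps'$; by linearity $\|\uni\tilde\psi_{k}-\pro^{u}_{T}\tilde\psi_{k}\|\le\sqrt n\,\|\psi_{k}\|\,\eps'$, so for $\eps'$ small the triangle inequality gives $\|\uni\psi_{k}-\pro^{u}_{T}\psi_{k}\|<\eps$. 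For $(i)\Rightarrow(ii)$, let $\rho_{0},\rho_{1}$ be unitarily equivalent, $\rho_{1}=W\rho_{0}W^{*}$ with $W$ unitary, and fix a spectral decomposition $\rho_{0}=\sum_{k\ge1}p_{k}|e_{k}\rangle\langle e_{k}|$ with $(e_{k})$ orthonormal and $\sum_{k}p_{k}=1$, so $\rho_{1}=\sum_{k}p_{k}|We_{k}\rangle\langle We_{k}|$. Given $\eps>0$, choose $N$ with $\sum_{k>N}p_{k}<\eps/4$; then the truncations $\rho_{0}^{N}=\sum_{k\le N}p_{k}|e_{k}\rangle\langle e_{k}|$, $\rho_{1}^{N}=W\rho_{0}^{N}W^{*}$ satisfy $\|\rho_{i}-\rho_{i}^{N}\|_{\mathrm{op}}<\eps/4$. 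Apply the upgraded $(i)$ to the family $e_{1},\dots,e_{N}$ with $\uni=W$ to get $u$ with $\|We_{k}-\pro^{u}_{T}e_{k}\|<\eta$ for $k\le N$; then, using unitary invariance of the operator norm,
$$
\big\|\rho_{1}-\pro^{u}_{T}\rho_{0}\pro^{u*}_{T}\big\|_{\mathrm{op}}\le\|\rho_{1}-\rho_{1}^{N}\|_{\mathrm{op}}+\sum_{k=1}^{N}p_{k}\,\big\||\pro^{u}_{T}e_{k}\rangle\langle\pro^{u}_{T}e_{k}|-|We_{k}\rangle\langle We_{k}|\big\|_{\mathrm{op}}+\|\rho_{0}^{N}-\rho_{0}\|_{\mathrm{op}}<\tfrac{\eps}{4}+2\eta+\tfrac{\eps}{4}\,,
$$
which is $<\eps$ as soon as $\eta<\eps/8$.

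For $(ii)\Rightarrow(iii)$, fix $\ns\in\N$, $\uni\in\mathbf{U}(\CH)$, $\eps>0$, choose pairwise distinct $c_{1}>\cdots>c_{\ns}>0$ with $\sum_{k}c_{k}=1$, and set $\rho_{0}=\sum_{k=1}^{\ns}c_{k}|\phi_{k}\rangle\langle\phi_{k}|$, $\rho_{1}=\uni\rho_{0}\uni^{*}$. By $(ii)$ there is a piecewise constant $u:[0,T]\to U$ with $\|\rho_{1}-\sigma\|_{\mathrm{op}}<\eta$, where $\sigma:=\pro^{u}_{T}\rho_{0}\pro^{u*}_{T}=\sum_{k=1}^{\ns}c_{k}|\pro^{u}_{T}\phi_{k}\rangle\langle\pro^{u}_{T}\phi_{k}|$. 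The operators $\rho_{1}$ and $\sigma$ are compact and self-adjoint with the same spectrum $\{0,c_{1},\dots,c_{\ns}\}$, the eigenvalue $c_{k}$ being simple, and their eigenprojectors at $c_{k}$ are $|\uni\phi_{k}\rangle\langle\uni\phi_{k}|$ and $|\pro^{u}_{T}\phi_{k}\rangle\langle\pro^{u}_{T}\phi_{k}|$ respectively. Let $g=\min\!\big(c_{\ns},\min_{1\le k<\ns}(c_{k}-c_{k+1})\big)>0$; if $\eta<g/4$, integrating the resolvent identity over the circle of radius $g/2$ around $c_{k}$ (which encircles $c_{k}$ and no other point of the spectrum of either operator) gives $\big\||\uni\phi_{k}\rangle\langle\uni\phi_{k}|-|\pro^{u}_{T}\phi_{k}\rangle\langle\pro^{u}_{T}\phi_{k}|\big\|_{\mathrm{op}}\le 2\eta/g$, and the phase estimate above yields $\theta_{k}\in\R$ with $\|e^{i\theta_{k}}\uni\phi_{k}-\pro^{u}_{T}\phi_{k}\|\le 2\sqrt2\,\eta/g$, which is $<\eps$ once $\eta<g\eps/(2\sqrt2)$.

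The main obstacle is the quantitative perturbation bound for the eigenprojector of a simple isolated eigenvalue used in $(ii)\Rightarrow(iii)$; this is standard (Riesz projection via a resolvent contour integral), the only care being to check that for $\eta$ small the same contour works for both $\rho_{1}$ and $\sigma$. Everything else is routine bookkeeping: the point to respect in $(i)\Rightarrow(ii)$ is that the truncation order $N$ (equivalently, the Galerkin order implicit in applying $(i)$) must be fixed first, before invoking simultaneous controllability, and in $(ii)\Rightarrow(iii)$ the auxiliary eigenvalues $c_{k}$ must be chosen pairwise distinct so that the eigenprojectors are genuinely rank one.
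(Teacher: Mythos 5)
Your proof is correct. The implication $(i)\Rightarrow(ii)$ follows essentially the paper's own route: truncate the spectral decomposition of $\rho_0$, approximate each $v_j$ by its Galerkin projection onto $\mathrm{span}\{\phi_1,\dots,\phi_n\}$, apply the hypothesis to the basis vectors, and push the resulting error through the rank-one projectors via the triangle inequality and unitary invariance of the operator norm; packaging the Galerkin step as a preliminary ``upgrade of $(i)$'' is the same idea, just organized as a lemma. The implication $(ii)\Rightarrow(iii)$, on the other hand, is genuinely different. The paper stays elementary: from $\|\rho_1-\pro^u_T\rho_0\pro^{u*}_T\|<C\eps$ it tests on the single vector $\pro^u_T\phi_{k_0}$, expands it in the orthonormal basis $(\uni\phi_k)_k$, and reads off $\sum_{k\ne k_0}|\langle\uni\phi_k,\pro^u_T\phi_{k_0}\rangle|^2<(\eps/2)^2$ directly, because the constant $C=\tfrac12\min\{a_j,|a_k-a_l|\}$ lower-bounds all coefficients $|a_{k_0}-a_k|$ and $a_{k_0}$ that appear; this uses nothing beyond Parseval. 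You instead note that $c_k$ is a simple isolated eigenvalue common to $\rho_1$ and $\sigma:=\pro^u_T\rho_0\pro^{u*}_T$, run a Riesz contour integral around it, and bound the difference of the two rank-one spectral projectors by $2\eta/g$ via the resolvent identity. Both routes are correct and of comparable length; the paper's is more self-contained (only inner-product manipulations), yours is conceptually cleaner once one grants the standard resolvent-perturbation lemma. You correctly observe that $\rho_1$ and $\sigma$ share the spectrum $\{0,c_1,\dots,c_\ns\}$ exactly, so the same contour works for both with no Weyl-type stability estimate needed; this is worth saying explicitly, and it actually makes your auxiliary condition $\eta<g/4$ unnecessary for the contour choice (it only keeps the bound quantitatively comfortable).
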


\begin{proof}
Let us prove that $(i)$ implies $(ii)$.
Fix  two unitarily equivalent density matrices $\rho_0$ and $\rho_1$. Write $\rho_0=\sum_{k \in \mathbf{N}} P_k v_k v^{\ast}_k$ with $(v_k)_{k\in \N}$ an orthonormal sequence in $\cal H$ and $(P_k)_{k\in \N}$ a sequence in $\ell^1([0,1])$ such that $\sum_k P_k =1$. By assumption, there exists $\uni$ in $\mathbf{U}({\cal H})$ such that $\rho_1=\uni \rho_0 \uni^{\ast}=\sum_{k\in\N} P_k \uni(v_k) \uni(v_k)^{\ast}$.

Let $m$ in $\mathbf{N}$ be such that $\sum_{k>m} P_{k} <\eps$
and  $\ns$ in $\N$ be
 such that $\|v_j - \sum_{k=1}^{\ns} \la \phi_{k}, v_{j} \ra \phi_{k}\|<\eps$ for every $j=1,\ldots,m$.
 By hypothesis, there exists a piecewise constant function $u:[0,T]\rightarrow U$ such that
$\|  \uni(\phi_k)  - \pro^{u}_T(\phi_k) \|<\eps/\ns$ for every $ 1\leq k \leq \ns$.
Hence, for $j=1,\ldots,m$,
\begin{align*}
\| \uni(v_j)  - \pro^{u}_T(v_j) \| & = \left\| \sum_{k=1}^{\infty}  \la\phi_{k},v_{j} \ra (\uni(\phi_k)  - \pro^{u}_T(\phi_k) ) \right\| \\
&\leq \sum_{k=1}^{\ns}\|  \la\phi_{k},v_{j} \ra (\uni(\phi_k)  - \pro^{u}_T(\phi_k))  \| + 
\left\| \uni\left(\sum_{k=\ns+1}^{\infty}  \la\phi_{k},v_{j} \ra \phi_{k}\right)\right\|   \\
&\quad+ \left\|  \pro^{u}_T\left(\sum_{k=\ns+1}^{\infty}  \la\phi_{k},v_{j} \ra \phi_{k}\right)\right\| \\ 
& \leq \eps + 2 \left\|v_j - \sum_{k=1}^{\ns} \la \phi_{k}, v_{j} \ra \phi_{k}\right\|  \leq 3 \eps.
\end{align*}
Then, recalling that for every $\theta$ in $\mathbf{R}$,  $a,b$ in $\cal H$, $\|a a^{\ast}-b b^{\ast}\|\leq \|aa^{\ast}-e^{i \theta} b a^{\ast} + e^{i \theta} b a^{\ast}- b b^{\ast}\| 
\leq \|a\| \|a-e^{i \theta}b\| + \|b \| \|e^{i \theta} a^{\ast}-b^{\ast}\| \leq 
 (\|a\| + \|b\|)\|a-e^{i \theta} b\|$, we get
\begin{eqnarray*}
 \| \pro^{u}_T \rho_0 \pro^{u^{\ast}}_T-\rho_1\| & \leq &
 \sum_{j =1}^{m} P_k \| \pro^{u}_T(v_j) \pro^{u}_T(v_j)^{\ast}-\uni(v_j)\uni(v_j)^{\ast}\| + 2 \eps\\
& \leq & \sum_{j =1}^{m} P_k (\|\pro^{u}_T(v_j)\| + \|\uni(v_j)\|)\|\pro^{u}_T(v_j)-\uni(v_j)\| + 2 \eps\\
&\leq& 6 \eps +2\eps=8\eps,
\end{eqnarray*}
which concludes the first part of the proof.

Assume now that $(ii)$ holds true.
Fix $\eps$, $\ns\in\N$,  and $\uni$ as in the hypotheses. Choose $a_{1},\ldots, a_{\ns} \in \R$  such that $0<a_1<a_2<\cdots<a_\ns$ and $\sum_{k=1}^\ns a_k=1$.  Define  
 the two unitarily equivalent density matrices
 $$
 \rho_0=\sum_{k=1}^\ns a_k \phi_k \phi_k^{\ast} \quad \mbox{ and } \quad \rho_1=\sum_{k=1}^\ns  a_k \uni(\phi_k) \uni(\phi_k)^{\ast}.
 $$
 By assumption, there exists a piecewise constant $u:[0,T]\rightarrow U$ such that 
\begin{equation}\label{EQ_ineq_density_matrices_criteron_contr}
 \| \rho_1 -\pro^{u}_T \rho_0 \pro^{u*}_T\|< C \eps,
\end{equation}
where $C=\min\{a_{j}, |a_{k}-a_{l}|\mid 1\leq j,k,l \leq \ns, k \neq l\}/2$.
Choose $1\leq k_0\leq \ns$ and test~\eqref{EQ_ineq_density_matrices_criteron_contr} 
on $\pro^{u}_T \phi_{k_0}$
$$
\left \| a_{k_0} \pro^{u}_T \phi_{k_0}- \sum_{k=1}^\ns a_k \langle \uni(\phi_k),\pro^{u}_T \phi_{k_0} \rangle \uni(\phi_k) \right \| < C \eps. 
$$
Since $(\uni(\phi_k))_k$ is an Hilbert basis of $\cal H$, then
$$\pro^{u}_T \phi_{k_0}= \sum_{k=1}^\infty \langle \uni(\phi_k),\pro^{u}_T \phi_{k_0} \rangle \uni(\phi_k),$$
and hence
$$
\left\|\sum_{k=1}^\ns (a_{k_0}-a_k) \langle \uni(\phi_k),\pro^{u}_T \phi_{k_0} \rangle \uni(\phi_k) + a_{k_0} 
\sum_{k>\ns} \langle \uni(\phi_k),\pro^{u}_T \phi_{k_0} \rangle \uni(\phi_k)\right\|
<C \eps. 
$$
In particular we have
$$
 \left\| \sum_{k\neq k_0} \langle \uni(\phi_k),\pro^{u}_T \phi_{k_0} \rangle \uni(\phi_k) \right\|< \frac{\eps}{2}.
 $$
For $\eps$  small enough, this leads to 
$
| \langle \uni(\phi_{k_{0}}),\pro^{u}_T \phi_{k_0} \rangle|>\sqrt{1-\eps^2/4}>1-\eps/2.
$
Hence,
there exists $\theta_{k_{0}}$ in $\mathbf{R}$ such that $\|e^{i \theta_{k_{0}}} \uni(\phi_{k_{0}})-\pro^{u}_T (\phi_{k_{0}})\| < \eps$. 
\end{proof}


\begin{prop}
Under the hypotheses of Proposition~\ref{prop:relations} and if, moreover, $\la \phi_{j}, B \phi_{k}\ra$ is purely imaginary for every $j,k \in \N$,  then
$(iii)$ implies $(ii)$.
\end{prop}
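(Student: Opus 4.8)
The plan is to transfer $\rho_0$ to $\rho_1$ through a density matrix that is diagonal in the basis $\Phi$, reaching that intermediate state by a time-reversal trick. The only place where the extra hypothesis enters is in making the time-reversed equation~\eqref{sch-tr} inherit property $(iii)$.

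First I would record the elementary consequence of the hypothesis: since $A\phi_k=i\lambda_k\phi_k$ and $\la\phi_j,B\phi_k\ra$ is purely imaginary, the matrix of $A+uB$ in the basis $\Phi$ has purely imaginary entries, hence coincides with the opposite of its entrywise complex conjugate. Writing a propagator of~\eqref{eq:main} as an ordered product of exponentials $e^{s_l(A+u_lB)}$ and conjugating entrywise, one obtains $\overline{\pro^u_T}=\widetilde{\pro}^u_T$, where $\widetilde{\pro}^u$ denotes the propagator of~\eqref{sch-tr} associated with the same control $u$. Since $\overline{\phi_k}=\phi_k$ and complex conjugation preserves norms, applying $(iii)$ to~\eqref{eq:main} with target $\overline{\uni}$ and conjugating the resulting estimates yields $(iii)$ for~\eqref{sch-tr} (with target $\uni$ and the opposite phases); note also that $(-A,-B,[0,\delta],\Phi)$ still satisfies $(\ass)$.

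Next I would prove the following ``diagonal source'' version of density-matrix controllability, valid for any system satisfying $(iii)$: a density matrix $\rho_D=\sum_k P_k\phi_k\phi_k^{\ast}$ that is diagonal in $\Phi$ can be steered arbitrarily close to any unitarily equivalent density matrix $\rho_1=\uni\rho_D\uni^{\ast}$. Indeed, choosing $m$ with $\sum_{k>m}P_k$ small and applying $(iii)$ with target $\uni$ and $r=m$ provides a control $u$ with $\|\pro^u_T\phi_k-e^{i\theta_k}\uni\phi_k\|$ small for $1\le k\le m$; the phases $e^{i\theta_k}$ then disappear from the rank-one projectors $(\pro^u_T\phi_k)(\pro^u_T\phi_k)^{\ast}$, and, using unitarity of $\pro^u_T$ to dominate the tail $\sum_{k>m}P_k$ together with the inequality $\|aa^{\ast}-bb^{\ast}\|\le(\|a\|+\|b\|)\|a-e^{i\theta}b\|$ already used in the proof of Proposition~\ref{prop:relations}, one gets $\|\pro^u_T\rho_D\pro^{u^{\ast}}_T-\rho_1\|$ small.

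Finally, given unitarily equivalent $\rho_0,\rho_1$, let $\rho_D$ be the density matrix with the same spectrum as $\rho_0$ but diagonal in $\Phi$, so that $\rho_0$, $\rho_D$, $\rho_1$ are pairwise unitarily equivalent. By the previous paragraph applied to~\eqref{sch-tr} (licit by the first step) there is a control $\widetilde u:[0,\widetilde T]\to[0,\delta]$ steering $\rho_D$ within $\eps/2$ of $\rho_0$ for~\eqref{sch-tr}; by unitarity of the evolution, $u(\cdot)=\widetilde u(\widetilde T-\cdot)$ then steers $\rho_0$ within $\eps/2$ of $\rho_D$ for~\eqref{eq:main}. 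Applying the ``diagonal source'' statement to~\eqref{eq:main} itself produces a control steering $\rho_D$ within $\eps/2$ of $\rho_1$, and concatenating the two controls gives $(ii)$. I expect the only substantial point to be the first step: recognizing that the purely imaginary hypothesis is exactly what lets $(iii)$ pass to~\eqref{sch-tr}; without it, $(iii)$ controls transformations of the basis vectors only up to an uncontrolled diagonal phase, which is harmless when the source is already diagonal but obstructs reaching a diagonal density matrix from an arbitrary one. The remaining ingredients — the eigendecomposition and truncation of a trace-class operator, the projector inequality, and the time-reversal identity — are routine and parallel the proof of Proposition~\ref{prop:relations}.
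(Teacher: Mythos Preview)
Your proof is correct and follows essentially the same route as the paper's: both derive a time-reversibility property from the purely-imaginary hypothesis (the paper phrases it as ``$\overline{\psi(T-\cdot)}$ in the basis $\Phi$ solves \eqref{eq:main} with control $u(T-\cdot)$'', you phrase it as ``$(iii)$ passes to \eqref{sch-tr}''), then steer $\rho_0$ to the diagonal density matrix $\rho_D=\sum_k P_k\phi_k\phi_k^\ast$ via time-reversal and $\rho_D$ to $\rho_1$ by a direct application of $(iii)$, the phases being harmless on rank-one projectors. Your packaging into a separate ``diagonal-source'' lemma is a bit more modular than the paper's direct two-step concatenation, but the mechanism is identical.
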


\begin{proof}
First of all, because of the hypotheses on $B$, system~\eqref{eq:main} satisfies the following time-reversibility property:
If $\psi: [0,T] \to \CH$ is a solution of~\eqref{eq:main} with control $u(\cdot)$, then  
$\varphi(t) = 
\sum_{k=1}^{\infty}  \overline{\la  \phi_{k}, \psi(T-t) \ra} \phi_{k} $ is a solution of~\eqref{eq:main} with control $u(T-\cdot)$.

Fix  two unitarily equivalent density matrices $\rho_0$ and $\rho_1$. Write $\rho_0=\sum_{k \in \mathbf{N}} P_k v_k v^{\ast}_k$ with $(v_k)_{k\in \N}$ an orthonormal sequence in $\cal H$ and $(P_k)_{k\in \N}$ a sequence in $\ell^1([0,1])$ such that $\sum_{k\in\N} P_k =1$. Let $\uni$ in $\mathbf{U}({\cal H})$ be such that $\rho_1=\uni \rho_0 \uni^{\ast}=\sum_{k\in\N} P_k \uni(v_k) \uni(v_k)^{\ast}$.

Let $\ns\in \N$ be such that $\sum_{k > \ns} P_{k} < \eps$. By hypothesis there exist a control 
$u:[0,T_{u}] \to [0,\delta]$ and $\theta_1,\ldots,\theta_{\ns} \in \R$ such that 
$\|e^{i \theta_k} \sum_{j=1}^{\infty} \overline{\la \phi_{j},v_k \ra} \phi_{j} - \pro^{u}_{T_{u}}(\phi_k)\|<\eps$ for every $1\leq  k \leq \ns$. By time reversibility the control $u (T_{u}-\cdot)$ 
steers $e^{- i \theta_k}v_k $ $\eps$-close to $ \phi_k$.
Now, there exist a control $w:[0,T_w]\to[0,\delta]$ and $\hat{\theta}_1, \ldots, \hat{\theta}_\ns$  such that
$$
\| e^{i \hat{\theta}_k} \uni(v_k) - \pro^{w}_{T_{w}}(\phi_k)  \|\,.
$$
Let $\tilde u$  be the concatenation of the controls $u(T_u-\cdot)$ and $w$.
 Then
$$
\| e^{i \hat{\theta}_k} \uni(v_k) - \pro^{\tilde{u}}_{T_{\tilde{u}}}(e^{-i {\theta}_k }v_k)  \| < 2 \eps \,.
$$
Finally,
\begin{align*}
\|\pro^{\tilde{u}}_{T_{\tilde{u}}} \rho_{0} \pro^{\tilde{u}^\ast}_{T_{\tilde{u}}} - \rho_{1}\| &\leq
\sum_{k=1}^{\ns} P_{k}\|\pro^{\tilde{u}}_{T_{\tilde{u}}}(e^{-i {\theta}_k }v_k)\pro^{\tilde{u}}_{T_{\tilde{u}}}(e^{-i {\theta}_k }v_k)^{\ast} - 
\uni(v_{k})
\uni(v_{k})
^{\ast}\| + \\
& \qquad + \sum_{k=\ns+1}^{\infty} P_{k}\|\pro^{\tilde{u}}_{T_{\tilde{u}}}(v_k)\pro^{\tilde{u}}_{T_{\tilde{u}}}(v_k)^{\ast} - 
\uni(v_{k})\uni(v_{k})^{\ast}\| \\
& \leq   4\eps \sum_{k=1}^{\ns} P_{k}  + 2\eps \leq 6 \eps.
\end{align*}
This concludes the proof.
\end{proof}


\section*{Acknowledgements}
This research has been supported by the European Research Council, ERC
StG 2009 ``GeCoMethods'', contract number 239748, by the ANR project
GCM, program ``Blanche'',
project number NT09-504490, and by the Inria Nancy-Grand Est ``COLOR'' project.
 
The authors wish to thank the {\it Institut Henri Poincar\'e} (Paris, France) for providing research facilities and a stimulating environment during the ``Control of Partial and Differential Equations and Applications'' program in the Fall 2010.

{\small
\bibliographystyle{abbrv}
\bibliography{biblio}

\begin{thebibliography}{10}

\bibitem{agrachev-chambrion}
A.~Agrachev and T.~Chambrion.
\newblock An estimation of the controllability time for single-input systems on
  compact {L}ie groups.
\newblock {\em ESAIM Control Optim. Calc. Var.}, 12(3):409--441, 2006.

\bibitem{book}
A.~A. Agrachev and Y.~L. Sachkov.
\newblock {\em Control theory from the geometric viewpoint}, volume~87 of {\em
  Encyclopaedia of Mathematical Sciences}.
\newblock Springer-Verlag, Berlin, 2004.
\newblock Control Theory and Optimization, II.

\bibitem{bms}
J.~M. Ball, J.~E. Marsden, and M.~Slemrod.
\newblock Controllability for distributed bilinear systems.
\newblock {\em SIAM J. Control Optim.}, 20(4):575--597, 1982.

\bibitem{beauchard}
K.~Beauchard.
\newblock Local controllability of a 1-{D} {S}chr\"odinger equation.
\newblock {\em J. Math. Pures Appl.}, 84(7):851--956, 2005.

\bibitem{beauchard-coron}
K.~Beauchard and J.-M. Coron.
\newblock Controllability of a quantum particle in a moving potential well.
\newblock {\em J. Funct. Anal.}, 232(2):328--389, 2006.

\bibitem{camillo}
K.~Beauchard and C.~Laurent.
\newblock Local controllability of 1{D} linear and nonlinear {S}chr{\"o}dinger
  equations with bilinear control.
\newblock {\em J. Math. Pures Appl.}, 94(5):520--554, 2010.

\bibitem{beauchard-mirrahimi}
K.~Beauchard and M.~Mirrahimi.
\newblock Practical stabilization of a quantum particle in a one-dimensional
  infinite square potential well.
\newblock {\em SIAM J. Control Optim.}, 48(2):1179--1205, 2009.

\bibitem{beauchard-nersesyan}
K.~Beauchard and V.~Nersesyan.
\newblock Semi-global weak stabilization of bilinear {S}chr\"odinger equations.
\newblock {\em C. R. Math. Acad. Sci. Paris}, 348(19-20):1073--1078, 2010.

\bibitem{rangan}
A.~M. Bloch, R.~W. Brockett, and C.~Rangan.
\newblock Finite controllability of infinite-dimensional quantum systems.
\newblock {\em IEEE Trans. Automat. Control}, 55(8):1797--1805, 2010.

\bibitem{thomas-solo-paper}
T.~Chambrion.
\newblock Simultaneous approximate tracking of density matrices for a system of
  {S}chroedinger equations.
\newblock {\em arXiv:0902.3798v1}.

\bibitem{Schrod}
T.~Chambrion, P.~Mason, M.~Sigalotti, and U.~Boscain.
\newblock Controllability of the discrete-spectrum {S}chr\"odinger equation
  driven by an external field.
\newblock {\em Ann. Inst. H. Poincar\'e Anal. Non Lin\'eaire}, 26(1):329--349,
  2009.

\bibitem{Poisson}
T.~{Chambrion} and A.~{Munnier}.
\newblock {On the locomotion and control of a self-propelled shape-changing
  body in a fluid}.
\newblock {\em Journal of Nonlinear Science}, 21(3):325--385, 2011.

\bibitem{dalessandro-book}
D.~D'Alessandro.
\newblock {\em {Introduction to quantum control and dynamics.}}
\newblock {Applied Mathematics and Nonlinear Science Series. Boca Raton, FL:
  Chapman, Hall/CRC.}, 2008.

\bibitem{puel}
S.~Ervedoza and J.-P. Puel.
\newblock Approximate controllability for a system of {S}chr\"odinger equations
  modeling a single trapped ion.
\newblock {\em Ann. Inst. H. Poincar\'e Anal. Non Lin\'eaire},
  26(6):2111--2136, 2009.

\bibitem{ito-kunisch}
K.~Ito and K.~Kunisch.
\newblock Optimal bilinear control of an abstract {S}chr\"odinger equation.
\newblock {\em SIAM J. Control Optim.}, 46(1):274--287, 2007.

\bibitem{jaku}
B.~Jakubczyk.
\newblock Introduction to geometric nonlinear control; controllability and
  {L}ie bracket.
\newblock In {\em Mathematical control theory, {P}art 1, 2 ({T}rieste, 2001)},
  ICTP Lect. Notes, VIII, pages 107--168. Abdus Salam Int. Cent. Theoret.
  Phys., Trieste, 2002.

\bibitem{katino}
T.~Kato.
\newblock {\em Perturbation theory for linear operators}.
\newblock Die Grundlehren der mathematischen Wissenschaften, Band 132.
  Springer-Verlag New York, Inc., New York, 1966.

\bibitem{eberly-law}
C.~K. Law and J.~H. Eberly.
\newblock Arbitrary control of a quantum electro-magnetic field.
\newblock {\em Phys. Rev. Lett.}, 76(7):1055--1058, 1996.

\bibitem{genericity-mario-paolo}
P.~Mason and M.~Sigalotti.
\newblock Generic controllability properties for the bilinear {S}chr{\"o}dinger
  equation.
\newblock {\em Communications in Partial Differential Equations}, 35:685--706,
  2010.

\bibitem{mirrahimi-continuous}
M.~Mirrahimi.
\newblock Lyapunov control of a quantum particle in a decaying potential.
\newblock {\em Ann. Inst. H. Poincar\'e Anal. Non Lin\'eaire},
  26(5):1743--1765, 2009.

\bibitem{Nersy}
V.~Nersesyan.
\newblock Growth of {S}obolev norms and controllability of the {S}chr\"odinger
  equation.
\newblock {\em Comm. Math. Phys.}, 290(1):371--387, 2009.

\bibitem{nersesyan}
V.~Nersesyan.
\newblock Global approximate controllability for {S}chr\"odinger equation in
  higher {S}obolev norms and applications.
\newblock {\em Ann. Inst. H. Poincar\'e Anal. Non Lin\'eaire}, 27(3):901--915,
  2010.

\bibitem{fratelli-nersesyan}
V.~Nersesyan and H.~Nersisyan.
\newblock Global exact controllability in infinite time of {S}chr{\"o}dinger
  equation.
\newblock {\em Accepted for publication in J. Math. Pures Appl.}, 2011.

\bibitem{corrige-mario-privat}
Y.~Privat and M.~Sigalotti.
\newblock Erratum of ``{T}he squares of the {L}aplacian-{D}irichlet
  eigenfunctions are generically linearly independent''.
\newblock {\em ESAIM Control Optim. Calc. Var.}, 16(3):806--807, 2010.

\bibitem{genericity-mario-privat}
Y.~Privat and M.~Sigalotti.
\newblock The squares of the {L}aplacian-{D}irichlet eigenfunctions are
  generically linearly independent.
\newblock {\em ESAIM Control Optim. Calc. Var.}, 16(3):794--805, 2010.

\bibitem{sachkov}
Y.~L. Sachkov.
\newblock Controllability of invariant systems on {L}ie groups and homogeneous
  spaces.
\newblock {\em J. Math. Sci. (New York)}, 100(4):2355--2427, 2000.
\newblock Dynamical systems, 8.

\bibitem{seideman}
T.~Seideman and E.~Hamilton.
\newblock Nonadiabatic alignment by intense pulses: concepts, theory and
  directions.
\newblock {\em Adv. At. Mol. Opt. Phys.}, 52:289, 2006.

\bibitem{spanner}
M.~Spanner, E.~A. Shapiro, and M.~Ivanov.
\newblock Coherent control of rotational wave-packet dynamics via fractional
  revivals.
\newblock {\em Phys. Rev. Lett.}, 92:093001, 2004.

\bibitem{stapelfeldt}
H.~Stapelfeldt and T.~Seideman.
\newblock Aligning molecules with strong laser pulses.
\newblock {\em Rev. Mod. Phys.}, 75:543, 2003.

\bibitem{turinici}
G.~Turinici.
\newblock On the controllability of bilinear quantum systems.
\newblock In M.~Defranceschi and C.~Le~Bris, editors, {\em Mathematical models
  and methods for ab initio Quantum Chemistry}, volume~74 of {\em Lecture Notes
  in Chemistry}. Springer, 2000.

\end{thebibliography}
}

\end{document}